\newcommand{\lleft}{\left}
\newcommand{\rright}{\right}
\newcommand{\rrVert}{\Vert}
\newcommand{\llVert}{\Vert}
\newcommand{\rrvert}{\vert}
\newcommand{\llvert}{\vert}
\newtheorem{thm}{Theorem}
\newtheorem{remark}{Remark}
\newtheorem{lemma}{Lemma}
\theoremstyle{definition}
\newtheorem{defin}{Definition}
\begin{document}

\begin{frontmatter}

\title{Double barrier reflected BSDEs with stochastic Lipschitz coefficient}

\author{\inits{M.}\fnm{Mohamed}\snm{Marzougue}\corref{cor1}}\email{md.marzougue@gmail.com}
\author{\inits{M.}\fnm{Mohamed}\snm{El Otmani}}\email{m.elotmani@uiz.ac.ma}
\cortext[cor1]{Corresponding author.}
\address{Laboratoire d'Analyse Math\'ematique et Applications (LAMA)\\
Facult\'{e} des Sciences Agadir, {Universit\'{e} Ibn Zohr}, {Maroc}}

\markboth{M. Marzougue, M. El Otmani}{Double barrier reflected BSDEs
with stochastic Lipschitz coefficient}

\begin{abstract}
This paper proves the existence and uniqueness of a solution to doubly
reflected backward stochastic differential equations where the coefficient
is stochastic Lipschitz, by means of the penalization method.
\end{abstract}
\begin{keywords}
\kwd{BSDE and reflected BSDE}
\kwd{Stochastic Lipschitz coefficient}
\end{keywords}
\begin{keywords}[2010]
\kwd{60H20}
\kwd{60H30}
\kwd{65C30}
\end{keywords}

\received{21 July 2017}
\revised{14 November 2017}
\accepted{14 November 2017}
\publishedonline{8 December 2017}
\end{frontmatter}

\section{Introduction}
Backward Stochastic Differential Equations (BSDEs) were introduced (in
the non-linear case) by Pardoux and Peng \cite{PP}.
Precisely, given a data $(\xi,f)$ of a square integrable random
variable $\xi$ and a progressively measurable function $f$, a solution
to BSDE associated with data $(\xi,f)$ is a pair of $\mathcal
{F}_t$-adapted processes $(Y,Z)$ satisfying
\begin{equation}
\label{BSDE} Y_{t}=\xi+\int_{t}^{T}
f(s,Y_{s},Z_{s})ds-\int_{t}^{T}Z_{s}dB_{s},
\quad0\leq t\leq T.
\end{equation}
These equations have attracted great interest due to their connections
with mathematical finance \cite{EQ,EPeQ}, stochastic control and
stochastic games \cite{Bi,HL} and partial differential equations \cite
{Pa,PP2}.

In their seminal paper \cite{PP}, Pardoux and Peng generalized such
equations to the Lipschitz condition and proved existence and
uniqueness results in a Brownian framework. Moreover, many efforts have
been made to relax the Lipschitz condition on the coefficient. In this
context, Bender and Kohlmann \cite{BK} considered the so-called
stochastic Lipschitz condition introduced by El Karoui and Huang \cite{ELH}.

Further, El Karoui et al. \cite{EKPPQ} have introduced the notion of
reflected BSDEs (RBSDEs in short), which is a BSDE but the solution is
forced to stay above a lower barrier. In detail, a solution to such
equations is a triple of processes $(Y,Z,K)$ satisfying
\begin{equation}
\label{RBSDE} Y_{t}=\xi+\int_{t}^{T}
f(s,Y_{s},Z_{s})ds+K_T-K_t-\int
_{t}^{T}Z_{s}dB_{s},\quad
Y_t\geq L_t\ 0\leq t\leq T,
\end{equation}
where $L$, the so-called barrier, is a given stochastic process. The
role of the continuous increasing process $K$ is to push the state
process upward with the minimal energy, in order to keep it above $L$;
in this sense, it satisfies $\int_0^T(Y_t-L_t)dK_t=0$. 
The authors have proved that equation (\ref{RBSDE}) has a unique
solution under square integrability of the terminal condition $\xi$ and
the barrier $L$, and the Lipschitz property of the coefficient $f$.

RBSDEs have been proven to be powerful tools in mathematical finance
\cite{EPeQ}, mixed game problems \cite{CK}, providing a probabilistic
formula for the viscosity solution to an obstacle problem for a class
of parabolic partial differential equations \cite{EKPPQ}.

Later, Cvitanic and Karatzas \cite{CK} studied doubly reflected BSDEs
(DRBSDEs in short). A solution to such an equation related to a
generator $f$, a terminal condition $\xi$ and two barriers $L$ and $U$
is a quadruple of $(Y, Z,K^+, K^-)$ which satisfies
\begin{eqnarray}
\label{a} &&\hspace{-1cm}  %
\begin{cases}
Y_{t}=\xi+\displaystyle\int_{t}^{T}
f(s,Y_{s},Z_{s})ds+\bigl(K_{T}^{+}-K_{t}^{+}\bigr)-\bigl(K_{T}^{-}-K_{t}^{-}\bigr)-
\displaystyle\int_{t}^{T}Z_{s}dB_{s}  \\
L_{t}\leq Y_{t}\leq U_{t},\ \forall t\leq T \mbox{ and }\displaystyle\int_{0}^{T}(Y_{t}-L_{t})dK_{t}^{+}=\displaystyle\int_{0}^{T}(U_{t}-Y_{t})dK_{t}^{-}=0.
\end{cases}
\end{eqnarray}
In this case, a solution $Y$ has to remain between the lower barrier
$L$ and upper barrier $U$. This is achieved by the cumulative action of
two continuous, increasing reflecting processes $K^\pm$. The authors
proved the existence and uniqueness of the solution when $f(t,\omega
,y,z)$ is Lipschitz on $(y,z)$ uniformly in $(t,\omega)$. At the same
time, one of the barriers $L$ or $U$ is regular or they satisfy the
so-called Mokobodski condition, which turns out into the existence of a
difference of a non-negative supermartingales between $L$ and $U$. In
addition, many efforts have been made to relax the conditions on $f$,
$L$ and $U$ \cite{BHM,HH,HHd,LSM,LS,Xu,ZZ} or to deal with other issues
\cite{CM,EL,ELM,EOH,RE}.

Let us have a look at the pricing problem of an American game option
driven by Black--Scholes market model which is given by the following
system of stochastic differential equations
\begin{eqnarray*}
&&\lleft\{ %
\begin{array}{@{}ll}
dS^0_t=r(t)S^0_tdt, & \hbox{$S^0_0>0$;} \\
dS_t=S_t\bigl(\bigl(r(t)+\theta(t)\sigma(t)\bigr)dt+\sigma(t)dB_t\bigr), & \hbox{$S_0>0$,}
\end{array} %
\rright.
\end{eqnarray*}
where $r(t)$ is the interest rate process, $\theta(t)$ is the risk
premium process, $\sigma(t)$ is the volatility process of the market.
The fair price of the American game option is defined by\vadjust{\goodbreak}
\begin{equation*}
Y_t=\inf_{\tau\in\Im_{[0,T]}}\sup_{\nu\in\Im_{[0,T]}}
\mathbb{E} \bigl[e^{-r(t)\sigma(t)\wedge\theta(t)}J(\tau,\nu)|\mathcal{F}_t \bigr],
\end{equation*}
where $\Im_{[0,T]}$ is the collection of all stopping times $\tau$ with
values between $0$ and $T$, and $J$ is a \textit{Payoff} given by
\[
J(\tau,\nu)=U_{\nu}\mathbh{1}_{\{\nu<\tau\}}+L_{\tau}
\mathbh{1}_{\{\tau
\leq\nu\}}+\xi\mathbh{1}_{\{\nu\wedge\tau=T\}}.
\]
Here $r(t)$, $\sigma(t)$ and $\theta(t)$ are stochastic, moreover they
are not bounded in general. So the existence results of Cvitanic and
Karatzas \cite{CK}, Li and Shi \cite{LS} with completely separated
barriers cannot be applied.

Motivated by the above works, the purpose of the present paper is to
consider a class of DRBSDEs driven by a Brownian motion with stochastic
Lipschitz coefficient. We try to get the existence and uniqueness of
solutions to those DRBSDEs by means of the penalization method and the
fixed point theorem. Furthermore, the comparison theorem for the
solutions to DRBSDEs will be established.

The paper is organized as follows: in Section~\ref{s1}, we give some
notations and assumptions needed in this paper. In Section~\ref{s2}, we
establish the a priori estimates of solutions to DRBSDEs. In Section~\ref{s3},
we prove the existence and uniqueness of solutions to DRBSDEs
via penalization method when one barrier is regular, in the first
subsection, then we study the case when the barriers are completely
separated, in the second subsection. In Section~\ref{s4}, we give the
comparison theorem for the solutions to DRBSDEs. Finally, an Appendix
is devoted to the special case of RBSDEs with lower barrier when the
generator only depends on $y$; furthermore, the corresponding
comparison theorem will be established under the stochastic Lipschitz
coefficient.
\section{Notations}\label{s1}
Let $(\varOmega,\mathcal{F},(\mathcal{F}_{t})_{t\leq T},\mathbb{P})$ be a
filtered probability space. Let $(B_{t})_{t\leq T}$ be a
$d$-dimensional Brownian motion. We assume that $(\mathcal
{F}_{t})_{t\leq T}$ is the standard filtration generated by the
Brownian motion $(B_{t})_{t\leq T}$.

We will denote by $|.|$ the Euclidian norm on $\mathbb{R}^{d}$.

Let's introduce some spaces:
\begin{itemize}
\item$\mathcal{L}^{2}$ is the space of $\mathbb{R}$-valued and
$\mathcal{F}_{T}$-measurable random variables $\xi$ such that
\[
\|\xi\|^{2}=\mathbb{E} \bigl[|\xi|^{2} \bigr] < +\infty.
\]
\item$\mathcal{S}^{2}$ is the space of $\mathbb{R}$-valued and
$\mathcal{F}_t$-progressively measurable processes $(K_{t})_{t\leq T}$
such that
\[
\|K\|^{2}=\mathbb{E} \Bigl[\sup\limits
_{0\leq t\leq T}|K_{t}|^{2}
\Bigr] < +\infty.
\]
\end{itemize}
Let $\beta> 0$ and $(a_{t})_{t\leq T}$ be a non-negative $\mathcal
{F}_{t}$-adapted process. We define the increasing continuous process
$A(t)= \int_0^t a^2(s)ds$, for all $t\leq T$, and introduce the
following spaces:
\begin{itemize}
\item$\mathcal{L}^{2}(\beta,a)$ is the space of $\mathbb{R}$-valued
and $\mathcal{F}_{T}$-measurable random variables $\xi$ such that
\[
\|\xi\|_\beta^{2}=\mathbb{E} \bigl[e^{\beta A(T)}|
\xi|^{2} \bigr] < +\infty.
\]
\item$\mathcal{S}^{2}(\beta,a)$ is the space of $\mathbb{R}$-valued
and $\mathcal{F}_t$-adapted continuous processes $(Y_t)_{t\leq T}$ such that
\[
\|Y\|_\beta^{2}=\mathbb{E} \Bigl[\sup\limits
_{0\leq t\leq T}e^{\beta
A(t)}|Y_{t}|^{2}
\Bigr] < +\infty.
\]
\item$\mathcal{S}^{2,a}(\beta,a)$ is the space of $\mathbb{R}$-valued
and $\mathcal{F}_t$-adapted processes $(Y_t)_{t\leq T}$ such that
\[
\|aY\|_\beta^{2}=\mathbb{E} \Biggl[\int
_{0}^{T}e^{\beta
A(t)}\big|a(t)Y_{t}\big|^{2}dt
\Biggr] < +\infty.
\]
\item$\mathcal{H}^{2}(\beta,a)$ is the space of $\mathbb{R}^d$-valued
and $\mathcal{F}_t$-progressively measurable processes $(Z_t)_{t\leq
T}$ such that
\[
\|Z\|_\beta^{2}=\mathbb{E} \Biggl[\int_{0}^{T}e^{\beta
A(t)}|Z_{t}|^{2}dt
\Biggr] < +\infty.
\]
\item$\mathfrak{B}^2$ is the Banach space of the processes $(Y,Z)\in
 (\mathcal{S}^{2}(\beta,a)\cap\mathcal{S}^{2,a}(\beta,a)
)\times\mathcal{H}^{2}(\beta,a)$ with the norm
\[
\big\|(Y,Z)\big\|_\beta=\sqrt{\|aY\|_\beta^{2}+
\|Z\|_\beta^{2}}.
\]
\end{itemize}
We consider the following conditions:
\begin{description}
\item[{$(H1)$} ] The terminal condition $\xi\in\mathcal{L}^{2}(\beta,a)$.
\end{description}
\noindent The coefficient $f : \varOmega\times[0,T] \times\mathbb{R}
\times\mathbb{R}^{d}\longrightarrow\mathbb{R}$ satisfies
\begin{description}
\item[{$(H2)$}] $\forall t\in[0,T]$ $\forall(y,z,y',z')\in\mathbb{R}
\times\mathbb{R}^{d}\times\mathbb{R} \times\mathbb{R}^{d}$, there
are two non-negative $\mathcal{F}_t$-adapted processes $\mu$ and $\gamma
$ such that
\[
\big|f(t,y,z)-f\bigl(t,y',z'\bigr)\big|\leq
\mu(t)\big|y-y'\big|+\gamma(t)\big|z-z'\big|.
\]
\item[{$(H3)$}] There exists $\epsilon>0$ such that $a^2(t):=\mu
(t)+\gamma^2(t) \geq\epsilon$.
\item[{$(H4)$}] For all $(y,z)\in\mathbb{R} \times\mathbb{R}^{d}$,
the process $(f(t,y,z))_t$ is progressively measurable and such that
\[
\frac{f(.,0,0)}{a}\in\mathcal{H}^{2}(\beta,a).
\]
\end{description}
\noindent The two reflecting barriers $L$ and $U$ are two $\mathcal
{F}_t$-adapted and continuous real-valued processes which satisfy
\begin{description}
\item[{$(H5)$}]\ \vspace*{-9pt}
\[
\mathbb{E} \Bigl[\sup_{0 \leq t\leq T}e^{2\beta A(t)}\big|L_{t}^+\big|^{2}
\Bigr] +\mathbb{E} \Bigl[\sup_{0 \leq t\leq T}e^{2\beta A(t)}\big|U_{t}^-\big|^{2}
\Bigr]< +\infty,
\]
where $L^+$ and $U^-$ are the positive and negative parts of $L$ and
$U$, respectively.
\item[{$(H6)$}] $U$ is regular: i.e., there exists a sequence of
$(U^{n})_{n\geq0}$ such that
\begin{description}
\item[$(i)$] $\forall t\leq T$,  $U_{t}^{n}\leq U_{t}^{n+1}$ and
$\lim\limits_{n\rightarrow+\infty}U_{t}^{n}=U_{t}$ $ \mathbb{P}$-a.s
\item[$(ii)$] $\forall n\geq0$, $\forall t\leq T$,
\[
U_{t}^{n}=U_{0}^{n}+\int
_{0}^{t}u_{n}(s)ds+\int
_{0}^{t}v_{n}(s)dB_{s}
\]
where the processes $u_{n}$ and $ v_{n}$ are $\mathcal{F}_{t}$-adapted
such that
\[
\sup_{n\geq0}\sup_{0\leq t\leq T} \bigl(u_{n}(t)
\bigr)^{+}\leq C \quad\mbox {and}\quad\mathbb{E} \Biggl[\int
_{0}^{T}\big|v_{n}(s)\big|^{2}ds
\Biggr]^{\frac
{1}{2}}<+\infty.
\]
\end{description}
\end{description}
%
\begin{defin}
Let $\beta>0$ and $a$ be a non-negative $\mathcal{F}_t$-adapted
process. A solution to DRBSDE is a quadruple $(Y,Z,K^+,K^-)$ satisfying
(\ref{a}) such that
\begin{itemize}
\item$(Y,Z)\in(\mathcal{S}^{2}(\beta,a)\cap\mathcal{S}^{2,a}(\beta
,a))\times\mathcal{H}^{2}(\beta,a)$,
\item$K^\pm\in\mathcal{S}^{2}$ are two continuous and increasing
processes with $K^\pm_0=0$.
\end{itemize}
\end{defin}
\section{A priori estimate}\label{s2}
\begin{lemma}
Let $\beta>0$ be large enough and assume $(H1)-(H6)$ hold. Let
$(Y,Z,K^+,\allowbreak{}K^-)\in(\mathcal{S}^{2}(\beta,a)\cap\mathcal
{S}^{2,a}(\beta,a))\times\mathcal{H}^{2}(\beta,a)\times\mathcal
{S}^{2}\times\mathcal{S}^{2}$ be a solution to DRBSDE with data $(\xi
,f,L,U)$. Then there exists a constant $C_\beta$ depending only on
$\beta$ such that
\begin{align}
& \mathbb{E} \Biggl[\sup_{0\leq t\leq T}e^{\beta A(t)}|Y_{t}|^{2}
+\int_{0}^{T}e^{\beta A(t)}
\bigl(a^2(t)|Y_{t}|^{2}+|Z_{t}|^{2}
\bigr)dt+\big|K_{T}^{+}\big|^2+\big|K_{T}^{-}\big|^2
\Biggr]
\nonumber
\\[-2pt]
&\quad\leq C_\beta\mathbb{E} \Biggl[e^{\beta A(T)}|\xi|^2+
\int_{0}^{T}e^{\beta A(t)}\frac{|f(t,0,0)|^2}{a^2(t)}dt
\nonumber
\\[-2pt]
&\qquad+\sup_{0\leq t\leq T}e^{2\beta
A(t)}
\bigl(\big|L_{t}^+\big|^{2}+\big|U_{t}^-\big|^{2}
\bigr) \Biggr].
\end{align}
\end{lemma}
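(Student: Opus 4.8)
The strategy is the standard one for a priori estimates in BSDE theory: apply Itô's formula to $e^{\beta A(t)}|Y_t|^2$, integrate over $[t,T]$, take expectations, and exploit the sign conditions on the reflecting terms together with the Skorokhod conditions $\int_0^T(Y_s-L_s)dK_s^+ = \int_0^T(U_s-Y_s)dK_s^- = 0$. First I would write
\begin{equation*}
e^{\beta A(t)}|Y_t|^2 + \int_t^T \beta a^2(s)e^{\beta A(s)}|Y_s|^2\,ds + \int_t^T e^{\beta A(s)}|Z_s|^2\,ds = e^{\beta A(T)}|\xi|^2 + 2\int_t^T e^{\beta A(s)}Y_s f(s,Y_s,Z_s)\,ds + 2\int_t^T e^{\beta A(s)}Y_s\,dK_s^+ - 2\int_t^T e^{\beta A(s)}Y_s\,dK_s^- - 2\int_t^T e^{\beta A(s)}Y_s Z_s\,dB_s.
\end{equation*}
The stochastic integral has zero expectation once we know $(Y,Z)$ lies in the stated spaces (localizing if needed). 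For the cross term with $f$, I would use $(H2)$ and $(H4)$ to bound $|f(s,Y_s,Z_s)| \le |f(s,0,0)| + \mu(s)|Y_s| + \gamma(s)|Z_s|$, then apply Young's inequality $2|Y_s||f(s,0,0)| \le \alpha a^2(s)|Y_s|^2 + \alpha^{-1}a^{-2}(s)|f(s,0,0)|^2$ and $2\gamma(s)|Y_s||Z_s| \le \gamma^2(s)|Y_s|^2 + |Z_s|^2$ with a small parameter on the $|Z_s|^2$ part; since $a^2(t) = \mu(t) + \gamma^2(t)$, all the $|Y_s|^2$ coefficients produced are controlled by a fixed multiple of $a^2(s)$. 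Choosing $\beta$ large enough absorbs these into the left-hand side, leaving $\mathbb{E}\big[e^{\beta A(t)}|Y_t|^2 + \int_t^T e^{\beta A(s)}(a^2(s)|Y_s|^2 + |Z_s|^2)\,ds\big]$ controlled by $\mathbb{E}\big[e^{\beta A(T)}|\xi|^2 + \int_0^T e^{\beta A(s)}a^{-2}(s)|f(s,0,0)|^2\,ds\big]$ plus the two reflecting-integral terms.

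The reflecting terms are the crux. Using the Skorokhod conditions, $\int_t^T e^{\beta A(s)}Y_s\,dK_s^+ = \int_t^T e^{\beta A(s)}L_s\,dK_s^+ \le \int_t^T e^{\beta A(s)}L_s^+\,dK_s^+$ and similarly $-\int_t^T e^{\beta A(s)}Y_s\,dK_s^- = -\int_t^T e^{\beta A(s)}U_s\,dK_s^- \le \int_t^T e^{\beta A(s)}U_s^-\,dK_s^-$. To close the estimate I would bound these by, e.g., $\tfrac12\mathbb{E}[\sup_s e^{2\beta A(s)}(|L_s^+|^2 + |U_s^-|^2)] + C\mathbb{E}[|K_T^+|^2 + |K_T^-|^2]$, so it remains to control $\mathbb{E}[|K_T^+|^2 + |K_T^-|^2]$. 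For this I would isolate $K_T^+ - K_T^-$ from the equation \eqref{a} evaluated at $t=0$, giving a bound on $\mathbb{E}|K_T^+ - K_T^-|^2$ in terms of quantities already estimated. To separate $K^+$ from $K^-$ one argues that on the support of $dK^+$ one has $Y = L < U$ (using $(H6)$, regularity of $U$, which prevents simultaneous reflection), hence $dK^+$ and $dK^-$ are mutually singular and $|K_T^+|^2 + |K_T^-|^2 \le |K_T^+ - K_T^-|^2$; alternatively one uses the comparison of $K^+$ against the process $v_n, u_n$ from $(H6)$ to get a direct bound on $K^-$ and then recovers $K^+$.

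The main obstacle I expect is precisely this estimate on the reflecting processes — specifically showing $\mathbb{E}[|K_T^+|^2 + |K_T^-|^2] < \infty$ with the right-hand side constant and absorbing the resulting terms without circularity. The delicate point is that bounding $K^\pm$ naively reintroduces $\sup_t e^{\beta A(t)}|Y_t|^2$, so one must feed in the already-obtained bound on $Y$ (or use the regularity assumption $(H6)$ to dominate $K^-$ by the finite-variation and martingale parts of $U^n$) and only then run a final Gronwall/BDG step: after taking $\sup_t$ inside the expectation on the left and applying the Burkholder–Davis–Gundy inequality to $\sup_t\big|\int_t^T e^{\beta A(s)}Y_sZ_s\,dB_s\big|$, one gets $\mathbb{E}[\sup_t e^{\beta A(t)}|Y_t|^2]$ on the left with a small multiple of itself plus $\mathbb{E}\int_0^T e^{\beta A(s)}|Z_s|^2\,ds$ on the right, both of which are absorbed using the bounds from the first step. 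Collecting everything yields the constant $C_\beta$ depending only on $\beta$.
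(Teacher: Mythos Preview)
Your overall strategy coincides with the paper's: apply It\^{o}'s formula to $e^{\beta A(t)}|Y_t|^2$, use $(H2)$ together with Young's inequality on the driver term, invoke the Skorokhod conditions to replace $Y_s\,dK_s^+$ by $L_s\,dK_s^+$ and $Y_s\,dK_s^-$ by $U_s\,dK_s^-$, bound the resulting integrals by $\sup_t e^{2\beta A(t)}(|L_t^+|^2+|U_t^-|^2)$ plus $|K_T^\pm|^2$, read off $K_T^+ - K_T^-$ from the equation at $t=0$, and finish with Burkholder--Davis--Gundy for the supremum. All of this matches the paper step for step.

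There is, however, a genuine error at the point where you separate $K_T^+$ from $K_T^-$. You assert that mutual singularity of $dK^+$ and $dK^-$ yields $|K_T^+|^2 + |K_T^-|^2 \le |K_T^+ - K_T^-|^2$. This is false. Take deterministically $K_t^+ = t\wedge 1$ and $K_t^- = (t-1)^+$ on $[0,2]$: the measures $dK^+$ and $dK^-$ are mutually singular, yet $K_2^+ = K_2^- = 1$, so $|K_2^+ - K_2^-|^2 = 0$ while $|K_2^+|^2 + |K_2^-|^2 = 2$. Mutual singularity only gives that $K_T^+ + K_T^-$ equals the total variation of $K^+ - K^-$, not the pointwise inequality you need. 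The paper is itself very terse at this exact step: after bounding $\mathbb{E}|K_T^+ - K_T^-|^2$ it simply writes down an inequality with $\mathbb{E}|K_T^+|^2 + \mathbb{E}|K_T^-|^2$ on the left and $\tfrac12\bigl(\mathbb{E}|K_T^+|^2 + \mathbb{E}|K_T^-|^2\bigr)$ on the right, then absorbs the half-terms, without displaying the intermediate passage. You are right that this is the crux of the argument, but your proposed justification does not work; your alternative suggestion of exploiting $(H6)$ to control $K^-$ directly via the It\^{o} decomposition of $U^n$ is the workable route and is what you should develop if you want a self-contained argument.
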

\begin{proof}
Applying It\^{o}'s formula and Young's inequality, combined with the
stochastic Lipschitz assumption $(H2)$ we can write
\begin{align*}
& e^{\beta A(t)}|Y_t|^2+\int_{t}^{T}
\beta e^{\beta
A(s)}a^2(s)|Y_{s}|^2ds+\int
_{t}^{T}e^{\beta A(s)}|Z_{s}|^2ds
\\[-2pt]
&\quad\leq e^{\beta A(T)}|\xi|^2+\frac{\beta}{2}\int
_{t}^{T}e^{\beta
A(s)}a^2(s)|Y_{s}|^2ds
+\frac{2}{\beta}\int_{t}^{T}e^{\beta A(s)}
\frac
{|f(s,Y_s,Z_s)|^2}{a^2(s)}ds
\\[-2pt]
&\qquad+2\int_{t}^{T}e^{\beta A(s)}Y_{s}dK_{s}^{+}-2
\int_{t}^{T}e^{\beta
A(s)}Y_{s}dK_{s}^{-}-2
\int_{t}^{T}e^{\beta A(s)}Y_{s}Z_{s}dB_{s}
\\[-2pt]
&\quad\leq e^{\beta A(T)}|\xi|^2+\frac{\beta}{2}\int
_{t}^{T}e^{\beta
A(s)}a^2(s)|Y_{s}|^2ds+
\frac{6}{\beta}\int_{t}^{T}e^{\beta
A(s)}a^2(s)|Y_{s}|^2ds
\\[-2pt]
&\qquad+\frac{6}{\beta}\int_{t}^{T}e^{\beta A(s)}|Z_{s}|^2ds+
\frac{6}{\beta
}\int_{t}^{T}e^{\beta A(s)}
\frac{|f(s,0,0)|^2}{a^2(s)}ds
\\
&\qquad+2\int_{t}^{T}e^{\beta A(s)}Y_{s}dK_{s}^{+}-2
\int_{t}^{T}e^{\beta
A(s)}Y_{s}dK_{s}^{-}-2
\int_{t}^{T}e^{\beta A(s)}Y_{s}Z_{s}dB_{s}.
\end{align*}
Using the fact that $dK_{s}^{+}=\mathbh{1}_{\{Y_s=L_s\}}dK_{s}^{+}$ and
$dK_{s}^{-}=\mathbh{1}_{\{Y_s=U_s\}}dK_{s}^{-}$, we have
\begin{align}
\label{e1} & e^{\beta A(t)}|Y_t|^2+\biggl(
\frac{\beta}{2}-\frac{6}{\beta}\biggr)\int_{t}^{T}
e^{\beta A(s)}a^2(s)|Y_{s}|^2ds +\biggl(1-
\frac{6}{\beta}\biggr)\int_{t}^{T}e^{\beta A(s)}|Z_{s}|^2ds
\nonumber
\\[-2pt]
&\quad\leq e^{\beta A(T)}|\xi|^2+\frac{6}{\beta}\int
_{t}^{T}e^{\beta
A(s)}\frac{|f(s,0,0)|^2}{a^2(s)}ds+2\int_{t}^{T}e^{\beta A(s)}L_{s}dK_{s}^{+}
\nonumber
\\[-2pt]
&\qquad-2
\int_{t}^{T}e^{\beta
A(s)}U_{s}dK_{s}^{-}-2
\int_{t}^{T}e^{\beta A(s)}Y_{s}Z_{s}dB_{s}.
\end{align}
Taking expectation on both sides above, we get
\begin{align}
\label{e2} &\mathbb{E} \Biggl[\int_{0}^{T}
e^{\beta A(s)}a^2(s)|Y_{s}|^2ds+\int
_{0}^{T}e^{\beta A(s)}|Z_{s}|^2ds
\Biggr]
\nonumber
\\[-2pt]
&\quad\leq c_\beta\mathbb{E} \Biggl[e^{\beta A(T)}|\xi|^2+
\int_{0}^{T}e^{\beta A(s)}\frac{|f(s,0,0)|^2}{a^2(s)}ds
\nonumber
\\[-2pt]
&\qquad+\sup_{0\leq t\leq T}e^{\beta A(t)} \bigl\llvert
L^+_{t} \bigr\rrvert ^2+ \bigl\llvert K_{T}^{+}
\bigr\rrvert ^2 +\sup_{0\leq t\leq T}e^{\beta A(t)} \bigl
\llvert U^-_{t} \bigr\rrvert ^2+ \bigl\llvert
K_{T}^{-} \bigr\rrvert ^2 \Biggr]
\end{align}
and by the Burkholder--Davis--Gundy's inequality we obtain
\begin{align}
& \mathbb{E}\sup_{0\leq t\leq T}e^{\beta A(t)}|Y_{t}|^{2}
\nonumber
\\[-2pt]
&\quad\leq\mathcal{C}_\beta\mathbb{E} \Biggl[e^{\beta A(T)}|
\xi|^2+\int_{0}^{T}e^{\beta A(s)}
\frac{|f(s,0,0)|^2}{a^2(s)}ds
\nonumber
\\[-2pt]
&\qquad+2\int_{t}^{T}e^{\beta A(s)}L_{s}dK_{s}^{+}-2
\int_{t}^{T}e^{\beta A(s)}L_{s}dK_{s}^{-}
\Biggr]\label{e3}
\\[-2pt]
&\quad\leq\mathcal{C}_\beta\mathbb{E} \Biggl[e^{\beta A(T)}|
\xi|^2+\int_{0}^{T}e^{\beta A(s)}
\frac{|f(s,0,0)|^2}{a^2(s)}ds
\nonumber
\\[-2pt]
&\qquad+\sup_{0\leq t\leq T}e^{2\beta A(t)}
\bigl(\big|L_{t}^+\big|^{2}+\big|U_{t}^-\big|^{2}
\bigr) +\big|{K_{T}^{+}}\big|^2+\big|{K_{T}^{-}}\big|^2
\Biggr].\label{e31}
\end{align}
To conclude, we now give an estimate of ${K_{T}^{+}}^2$ and
${K_{T}^{-}}^2$. From the equation
\[
K_{T}^{+}-K_{T}^{-}=Y_0-
\xi-\int_0^Tf(s,Y_s,Z_s)ds+
\int_0^TZ_sdB_s
\]
and the stochastic Lipschitz property $(H2)$, we have
\begin{align*}
& \mathbb{E} \bigl[\big|K_{T}^{+}-K_{T}^{-}\big|^2
\bigr]
\\[-2pt]
&\quad\leq 4\mathbb{E} \Biggl[\sup_{0\leq t\leq T}e^{\beta
A(t)}|Y_{t}|^{2}+|
\xi|^2+\biggl(1+\frac{3}{\beta}\biggr)\int_0^Te^{\beta
A(s)}|Z_s|^2ds
\\
&\qquad+\frac{3}{\beta}\int_{0}^{T}e^{\beta
A(s)}a^2(s)|Y_{s}|^{2}ds
+\frac{3}{\beta}\int_{0}^{T}e^{\beta A(s)}
\frac
{|f(s,0,0)|^2}{a^2(s)}ds \Biggr].
\end{align*}
Combining this with (\ref{e3}), we derive that
\begin{align}
\label{e4} \mathbb{E} \bigl\llvert K_{T}^{+}
\bigr\rrvert ^2+\mathbb{E} \bigl\llvert K_{T}^{-}
\bigr\rrvert ^2 &\leq\mathfrak{C}_\beta\mathbb{E}
\Biggl[e^{\beta A(T)}|\xi|^{2} +\int_{0}^{T}e^{\beta A(s)}
\frac{|f(s,0,0)|^2}{a^2(s)}ds
\nonumber
\\
&\quad+\sup_{0\leq t\leq T}e^{2\beta
A(t)}
\bigl(\big|L_{t}^+\big|^{2}+\big|U_{t}^-\big|^{2}
\bigr) \Biggr] +\frac{1}{2}\mathbb{E} \bigl\llvert {K_{T}^{+}}
\bigr\rrvert ^2+\frac{1}{2}\mathbb {E} \bigl\llvert
{K_{T}^{-}} \bigr\rrvert ^2.
\end{align}
The desired result is obtained by estimates (\ref{e2}), (\ref{e31}) and
(\ref{e4}).
\end{proof}
\section{Existence and uniqueness of solution}\label{s3}
\subsection{The obstacle $U$ is regular}
In this part, we apply the penalization method and the fixed point
theorem to give the existence of the solution to the DRBSDE (\ref{a}).
We first consider the special case when the generator does not depend
on $(y,z)$:
\[
f(t,y,z)=g(t).
\]
\begin{thm}\label{t}
Assume that $\frac{g}{a}\in\mathcal{H}^{2}(\beta,a)$ and
${(H1)}$--${(H6)}$ hold. Then, the doubly reflected BSDE (\ref{a}) with
data $(\xi,g,L,U)$ has a unique solution $(Y,Z,K^+,K^-)$ that belongs
to $(\mathcal{S}^{2}(\beta,a)\cap\mathcal{S}^{2,a}(\beta,a))\times
\mathcal{H}^{2}(\beta,a)\times\mathcal{S}^{2}\times\mathcal{S}^{2}$.
\end{thm}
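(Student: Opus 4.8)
The plan is to use the penalization method, approximating the doubly reflected BSDE with a sequence of one-barrier (lower-reflected) BSDEs in which the upper constraint is enforced by an exponentially growing penalty term. Concretely, for each $n \geq 0$, consider the reflected BSDE with lower barrier $L$,
\begin{equation*}
Y^n_t = \xi + \int_t^T g(s)\,ds - n\int_t^T \bigl(Y^n_s - U_s\bigr)^+ ds + K^{n}_T - K^{n}_t - \int_t^T Z^n_s\,dB_s, \qquad Y^n_t \geq L_t,
\end{equation*}
with $\int_0^T (Y^n_t - L_t)\,dK^n_t = 0$. The generator here is $f_n(t,y) = g(t) - n(y - U_s)^+$, which is still stochastic Lipschitz (the penalty term is $1$-Lipschitz in $y$ with a deterministic, hence $a$-dominated, constant since $a^2 \geq \epsilon$ by $(H3)$), and it depends only on $y$; so existence and uniqueness of $(Y^n, Z^n, K^n)$ in the appropriate weighted spaces follows from the singly-reflected theory referred to in the Appendix (or can be obtained directly by the contraction argument in $\mathfrak{B}^2$). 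Writing $K^{-,n}_t := n\int_0^t (Y^n_s - U_s)^+ ds$, the quadruple $(Y^n, Z^n, K^n, K^{-,n})$ satisfies the penalized equation.

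The core of the argument is then the passage to the limit. First I would establish uniform (in $n$) a priori estimates: by an Itô/Young computation in the spirit of the Lemma just proved — but now keeping the penalty term, using $dK^n_s = \mathbb{1}_{\{Y^n_s = L_s\}}\,dK^n_s$ and the comparison with the constant-$U_s$ bound — one gets
\begin{equation*}
\mathbb{E}\Biggl[\sup_{0\leq t\leq T} e^{\beta A(t)}|Y^n_t|^2 + \int_0^T e^{\beta A(t)}\bigl(a^2(t)|Y^n_t|^2 + |Z^n_t|^2\bigr)dt + |K^n_T|^2 + |K^{-,n}_T|^2\Biggr] \leq C_\beta,
\end{equation*}
with $C_\beta$ independent of $n$, provided $\beta$ is large enough. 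The key monotonicity input is a comparison theorem for these penalized (singly reflected) equations: since $f_n(t,y) \geq f_{n+1}(t,y)$, one gets $Y^n_t \geq Y^{n+1}_t$ for all $t$, so $(Y^n)$ decreases. Combined with a uniform lower bound (e.g. $Y^n \geq$ the solution of the lower-reflected BSDE with data $(\xi, g, L)$, which dominates everything from below), the sequence $Y^n$ converges pointwise and, by dominated convergence using the uniform estimate, in $\mathcal{S}^2(\beta,a) \cap \mathcal{S}^{2,a}(\beta,a)$ to some limit $Y$.

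Next I would show $Z^n \to Z$ in $\mathcal{H}^2(\beta,a)$ and $K^n \to K^+$, $K^{-,n} \to K^-$ (in $\mathcal{S}^2$, at least weakly then strengthened) by the standard penalization estimates: applying Itô to $e^{\beta A(t)}|Y^n_t - Y^m_t|^2$ and using the uniform bounds to control the cross terms, one obtains that $(Z^n)$ is Cauchy in $\mathcal{H}^2(\beta,a)$ and that $K^{-,n}_t = n\int_0^t(Y^n_s - U_s)^+ds$ is bounded in $\mathcal{S}^2$; this forces $\mathbb{E}\int_0^T\bigl((Y_s - U_s)^+\bigr)^2 ds = 0$, hence $Y_t \leq U_t$. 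The regularity hypothesis $(H6)$ on $U$ is exactly what is needed here to show $\int_0^T (U_t - Y_t)\,dK^-_t = 0$ in the limit — comparing $Y^n$ with $U^n$ via Itô and using $\sup_n \sup_t (u_n(t))^+ \leq C$ gives the Skorokhod condition for $K^-$ after passing to the limit. Passing to the limit in the equation itself then yields that $(Y,Z,K^+,K^-)$ solves the DRBSDE \eqref{a}, and it lies in the required spaces by lower semicontinuity of the norms together with the uniform estimate. Uniqueness follows from the a priori estimate Lemma applied to the difference of two solutions (the driver difference vanishes since $f = g$ does not depend on $(y,z)$, and the reflection/Skorokhod terms have the favorable sign), giving $\|(Y - Y', Z - Z')\|_\beta = 0$.

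The main obstacle I anticipate is the limit argument for the upper reflecting process, namely establishing $Y_t \leq U_t$ together with the Skorokhod flatness condition $\int_0^T (U_t - Y_t)\,dK^-_t = 0$: this is where the regularity of $U$ from $(H6)$ must be used in an essential way (via Itô's formula applied to $e^{\beta A(t)}(Y^n_t - U^n_t)^+$ or $((Y^n_t - U^n_t)^+)^2$ and the uniform control on $u_n, v_n$), and where the stochastic weight $e^{\beta A(t)}$ complicates the Burkholder--Davis--Gundy and Young estimates compared with the classical constant-Lipschitz case. Everything else — the uniform estimates, the monotone convergence of $Y^n$, the Cauchy property of $Z^n$, and uniqueness — is a weighted-norm adaptation of the now-standard penalization computations.
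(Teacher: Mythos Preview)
Your overall strategy---penalize the upper barrier by a lower-reflected BSDE, use comparison to get $Y^n\searrow Y$, show $(Z^n)$ is Cauchy, pass to the limit and verify Skorokhod---is exactly the paper's route. Two points in the plan need correction, though.

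First, a sign slip: the solution of the lower-reflected BSDE with data $(\xi,g,L)$ is precisely your $Y^0$ (the $n=0$ case), and comparison gives $Y^n\le Y^0$, not $Y^n\ge Y^0$. So this furnishes an \emph{upper} bound, not a lower one. In fact the paper never uses a separate pointwise lower bound; the uniform a priori estimate is obtained directly from the It\^o computation once the penalty term has been controlled.

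Second, and more importantly, you misplace the role of $(H6)$. The regularity of $U$ is not used at the end for the Skorokhod condition on $K^-$; it is used at the very beginning to prove the pathwise bound
\[
\sup_{0\le t\le T}\, n\bigl(Y^n_t-U_t\bigr)^+\le C\quad\mathbb{P}\text{-a.s.},
\]
via a doubly-penalized BSDE $Y^{n,m}$, comparison of $Y^{n,m}-U^m$ with a stochastic-control representation, and the uniform bound on $(u_m)^+$ from $(H6)(ii)$. This pathwise bound is the linchpin: without it you cannot close the uniform estimate (in the It\^o expansion the cross term $-nY^n_s(Y^n_s-U_s)^+\le nU_s^-(Y^n_s-U_s)^+$ still carries a factor $n$), you cannot bound $\mathbb{E}|K^{n-}_T|^2$, and you cannot control the right-hand side of the Cauchy estimate for $Y^n-Y^p$ (where terms like $(Y^p_s-U_s)^+\,dK^{n-}_s$ are handled by Cauchy--Schwarz using both $\sup_t e^{\beta A(t)}(Y^p_t-U_t)^+\to 0$ and $\mathbb{E}|K^{n-}_T|^2\le (CT)^2$). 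The paper also combines this bound with an auxiliary reflected BSDE having linear penalty $g-n(y-U)$ and a section theorem to get $Y_\tau\le U_\tau$ at every stopping time, which is stronger than the $L^2$-in-time conclusion your sketch yields. So the ``main obstacle'' you anticipate is real, but its resolution is this pathwise bound, and it must appear as a separate lemma \emph{before} the uniform estimate rather than as a closing step; your idea of comparing $Y^n$ with $U^n$ via It\^o is pointing in the right direction but needs to be made precise there.
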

For all $n\in\mathbb{N}$, let $(Y^n,Z^n,K^{n+})$ be the $\mathcal
{F}_t$-adapted process with values in $(\mathcal{S}^{2}(\beta,a)\cap
\mathcal{S}^{2,a}(\beta,a))\times\mathcal{H}^{2}(\beta,a)\times
\mathcal{S}^{2}$ being a solution to the reflected BSDE with data $(\xi
, g(t)-n(y-U_t)^+,L)$. That is
\begin{eqnarray}
\label{r1} &&\hspace{-1cm} \displaystyle \left\{ %
\begin{array}{@{}ll}
Y^n_{t}=\xi+\displaystyle\int_{t}^{T} g(s)ds-n\displaystyle\int_{t}^{T}\bigl(Y^n_s-U_s\bigr)^+ds+K^{n+}_{T}-K^{n+}_{t}-\displaystyle\int_{t}^{T}Z^n_{s}dB_{s}& \hbox{}\\
Y^n_{t}\geq L_{t}, \ \forall t\leq T \ \mbox{and}\ \displaystyle
\int_{0}^{T}\bigl(Y^n_{t}-L_{t}\bigr)dK^{n+}_{t}=0.&
\hbox{}
\end{array}
\right.
\end{eqnarray}
We denote $K^{n-}_{t}:=n\int_0^t(Y^n_s-U_s)^+ds$ and
$g^n(s,y):=g(s)-n(y-U_s)^+$.

We have divided the proof of Theorem \ref{t} into sequence of lemmas.
%
\begin{lemma}\label{l1}
There exists a positive constant $C$ such that
\[
\sup_{0\leq t\leq T} n\bigl(Y_{t}^{n}-U_{t}
\bigr)^{+}\leq C\quad\mathbb{P}\hbox{-}a.s.
\]
\end{lemma}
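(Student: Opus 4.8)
The plan is to obtain a uniform (in $n$, $t$, $\omega$) bound on the penalization term $n(Y^n_t-U_t)^+$ by comparing the penalized RBSDE \eqref{r1} with an auxiliary process built from the regularizing sequence $(U^n)$ furnished by $(H6)$. First I would recall the key structural fact: since $U$ is regular, for each $m$ we have $U^m_t = U^m_0 + \int_0^t u_m(s)\,ds + \int_0^t v_m(s)\,dB_s$ with $\sup_m\sup_t (u_m(t))^+\le C$, and $U^m_t\uparrow U_t$. The idea is that $Y^n$ solves an RBSDE with lower barrier $L$ and generator $g(s)-n(y-U_s)^+$; to control the negative reflection $K^{n-}$, I would compare $Y^n$ from above with the (super)solution obtained by replacing the obstacle $U$ with its smooth minorant $U^m$, exploiting that a smooth barrier can be absorbed into the generator via Itô's formula.

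The main steps, in order: (1) Fix $m$ and consider the difference $\widehat Y := Y^n - U^m$. Applying Itô's formula to (a suitable function of) $e^{\beta A(t)}(\widehat Y_t^+)^2$ and using that $dK^{n+}_s$ acts only on $\{Y^n_s=L_s\le U_s\}$ while the penalization $-n(Y^n_s-U_s)^+$ has a definite sign, I would derive a differential inequality for $\mathbb{E}[e^{\beta A(t)}(\widehat Y_t^+)^2]$ or, better, a pathwise estimate. (2) On the set where $Y^n_t>U_t\ge U^m_t$, the drift of $\widehat Y$ contains the term $-n(Y^n_t-U_t)^+$ plus the bounded contribution $g(t)-u_m(t)\ge -|g(t)|-C$; since $a^2\ge\epsilon$ by $(H3)$ and $g/a\in\mathcal H^2(\beta,a)$, standard RBSDE comparison / a priori estimates give $\mathbb{E}\big[\sup_t e^{\beta A(t)}(Y^n_t-U^m_t)^+{}^2\big]\le C/n^2$ up to terms that vanish as $m\to\infty$ by monotone convergence ($U^m\uparrow U$). (3) Passing $m\to\infty$ and then using the representation $K^{n-}_t=n\int_0^t(Y^n_s-U_s)^+ds$ together with the equation for $Y^n$, I would bound $n(Y^n_t-U_t)^+$ uniformly; concretely, writing $n(Y^n_t-U_t)^+ = \dot K^{n-}_t$ one estimates $K^{n-}_T - K^{n-}_t$ and localizes. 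Alternatively, and more directly, I would run the comparison argument of Lemma (the a priori estimate) to get $\mathbb{E}|K^{n-}_T|^2\le C$ uniformly in $n$, and then upgrade to the pathwise bound via the explicit form of the penalization and the regularity of $U$.

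The hard part will be step (2)–(3): turning the integral/$L^2$ control of the penalization into the stated \emph{pathwise, uniform-in-$t$} bound $\sup_{t}n(Y^n_t-U_t)^+\le C$ a.s. This is exactly where regularity of $U$ is essential — for a merely continuous $U$ one only gets $L^2$ bounds. The mechanism is that $Y^n - U^m$ is a semimartingale whose positive part has drift bounded above by $-n(Y^n-U)^+ + (\,|g|+C\,)$ plus a martingale, so by a comparison with the deterministic ODE $\dot\phi = -n\phi + (\|g\|_\infty$-type term$)$ one forces $(Y^n_t-U_t)^+ = O(1/n)$ pointwise; making the $g$-term rigorous (it is only $L^2$, not bounded) requires a stopping-time localization where $\int_0^t|g(s)|^2 ds$ is controlled, then taking expectations and using that the bound does not depend on the localizing sequence. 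Once $\sup_t n(Y^n_t-U_t)^+\le C$ is in hand, the constant $C$ depends only on $\|g/a\|_\beta$, $\epsilon$, $C$ from $(H6)$, and the $(H5)$-norms of $L,U$, which is what is claimed.
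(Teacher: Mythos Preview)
Your proposal has a genuine gap at the crucial step: converting integral or $L^2$ control of the penalization into the \emph{pathwise} bound $\sup_t n(Y^n_t-U_t)^+\le C$ $\mathbb{P}$-a.s. Your ODE comparison heuristic $\dot\phi=-n\phi+(\text{forcing})$ cannot be applied to $\widehat Y=Y^n-U^m$ directly, because $\widehat Y$ carries a nontrivial martingale part $(Z^n-v_m)\,dB$; any pathwise comparison with a deterministic ODE fails, and once you take expectations to kill the martingale you are back to an $L^2$ estimate, not an a.s.\ one. Moreover, the forcing is $g(t)+u_m(t)$, not a bounded quantity: only $(u_m)^+\le C$ is assumed, and $g$ is merely in $\mathcal H^2(\beta,a)$. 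Your proposed stopping-time localization would produce a constant depending on the localization level and hence on $\omega$. Finally, note that the penalization $-n(Y^n-U)^+$ is active only on $\{Y^n>U\}$, whereas $\widehat Y>0$ means $Y^n>U^m$; on the strip $\{U^m<Y^n\le U\}$ there is no damping at all, so even the formal ODE picture breaks down.

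The paper takes a different route that delivers the pathwise bound in one stroke. It introduces a \emph{doubly} penalized BSDE $Y^{n,m}$ (penalizing also the lower barrier at level $m$), sets $\bar Y^{n,m}=Y^{n,m}-U^m$, and invokes the stochastic-game representation of Cvitanic--Karatzas (p.~2042): writing $R_{t,s}=\exp\{-\int_t^s(\nu+\lambda)\}$ with controls $\nu\in[0,n]$, $\lambda\in[0,m]$, one obtains
\[
\bar Y^{n,m}_t\le \operatorname*{ess\,sup}_{\lambda}\operatorname*{ess\,inf}_{\nu}\,
\mathbb{E}\!\left[\int_t^T e^{-\int_t^s(\nu+\lambda)}\,|u_m(s)|\,ds\ \Big|\ \mathcal F_t\right],
\]
and choosing $\nu\equiv n$ together with $(u_m)^+\le C$ from $(H6)(ii)$ gives $(\bar Y^{n,m}_t)^+\le C/n$ for every $t$, $\omega$. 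Letting $m\to\infty$ (so that $Y^{n,m}\to Y^n$ and $U^m\to U$) yields the lemma. The key idea you are missing is precisely this game-theoretic representation, which replaces the semimartingale comparison by a conditional-expectation bound that is automatically pathwise.
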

\begin{proof}
For all $n,m \geq0$, let $(Y^{n,m},Z^{n,m})$ be the solution to the
following BSDE
\[
Y_{t}^{n,m} =\xi-\int_{t}^{T}
\bigl\{ g(s)+m\bigl(Y_{s}^{n,m}-L_{s}
\bigr)^{-}-n\bigl(Y_{s}^{n,m}-U_{s}
\bigr)^{+} \bigr\}ds-\int_{t}^{T}Z_{s}^{n,m}dB_{s}.
\]
We denote $\bar{Y}^{n,m}=Y^{n,m}-U^{m}$. Then we have
\begin{align*}
\bar{Y}_{t}^{n,m}&=\xi-U_{T}^{m}+\int
_{t}^{T}\bigl(g(s)+u_{m}(s)\bigr)ds-n
\int_{t}^{T}\bigl(\bar{Y}_{s}^{n,m}-
\bigl(U_{s}-U_{s}^m\bigr)\bigr)^{+}ds
\\
&\quad +m\int_{t}^{T}\bigl(\bar{Y}_{s}^{n,m}-
\bigl(L_{s}-U_{s}^m\bigr)\bigr)^{-}ds-
\int_{t}^{T}\bigl(Z_{s}^{n,m}-v_{n}(s)
\bigr)dB_{s}.
\end{align*}
For $n\geq0$, let $\mathcal{D}_{n}$ be the class of $\mathcal
{F}_t$-progressively measurable process taking values in $[0,n]$. For
$\nu\in\mathcal{D}_{n}$ and $\lambda\in\mathcal{D}_{m}$ we denote
$R_{t}=e^{-\int_{0}^{t}(\nu(s)+\lambda(s))ds}$. Applying It\^{o}'s
formula to $R_{t}\bar{Y}_{t}^{n,m}$ and using the same arguments as on
page 2042 of \cite{CK}, one can show that
\[
\bar{Y}_{t}^{n,m} \leq\operatorname*{ess
\,sup}_{\lambda\in\mathcal{D}_{m}} \operatorname *{ess\,inf}_{\nu\in\mathcal{D}_{n}}\mathbb{E} \Biggl[
\int_{t}^{T}e^{-\int
_{t}^{s}(\nu(r)+\lambda(r))dr}\big|u_{m}(s)\big|ds
|\mathcal{F}_{t} \Biggr].
\]
From the assumption $(H6)(ii)$, we can write $\bar{Y}_{t}^{n,m}\vee0
\leq\frac{C}{n}$. It follows that
\[
\forall t\leq T,\qquad n\bigl(\bar{Y}_{t}^{n,m}\vee0\bigr)
\xrightarrow[m\to +\infty]{} n\bigl(Y_{t}^{n}-U_{t}
\bigr)^{+}\leq C \quad\mathbb{P}\hbox{-}a.s.\qedhere
\]
\end{proof}
\begin{lemma}\label{lll}
There exists a positive constant $C'_\beta$ depending only on $\beta$
such that for all $n\geq0$
\begin{align*}
\label{e5} &\mathbb{E} \Biggl[\sup_{0\leq t\leq T}e^{\beta
A(t)}\big|Y_{t}^{n}\big|^{2}+
\int_{0}^{T}e^{\beta A(t)}a^2(t)\big|Y_{t}^{n}\big|^{2}dt
+\int_{0}^{T}e^{\beta A(t)}\big|Z_{t}^{n}\big|^{2}dt+\big|K_{T}^{n+}\big|^2
\Biggr]
\nonumber
\\
&\quad\leq C'_\beta\mathbb{E} \Biggl[e^{\beta A(T)}|
\xi|^2+\int_{0}^{T}e^{\beta A(t)}
\biggl\llvert \frac{g(t)}{a(t)} \biggr\rrvert ^2dt
\\
&\qquad+\sup_{0\leq t\leq T}e^{2\beta
A(t)}\big|U_{t}^-\big|^{2}+
\sup_{0\leq t\leq T}e^{2\beta
A(t)}\big|L_{t}^+\big|^{2}
\Biggr].
\end{align*}
\end{lemma}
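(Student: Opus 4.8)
The strategy is to run, for the penalized triple $(Y^n,Z^n,K^{n+})$ together with $K^{n-}_t:=n\int_0^t(Y^n_s-U_s)^+ds$, the It\^{o}--Young argument that produced the a priori estimate of Section~\ref{s2}, the only genuinely new input being Lemma~\ref{l1}. Its immediate consequence is that $dK^{n-}_s\le C\,ds$ pointwise, hence $K^{n-}_T\le CT$; thus the increasing process $K^{n-}$ is bounded by a deterministic constant \emph{independent of $n$}, which is exactly what the downward penalization will cost.

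I would apply It\^{o}'s formula to $e^{\beta A(t)}|Y^n_t|^2$ on $[t,T]$ with the dynamics (\ref{r1}) and rearrange, so that the quantity $e^{\beta A(t)}|Y^n_t|^2+\int_t^T\beta e^{\beta A(s)}a^2(s)|Y^n_s|^2 ds+\int_t^T e^{\beta A(s)}|Z^n_s|^2 ds$ equals $e^{\beta A(T)}|\xi|^2+2\int_t^T e^{\beta A(s)}Y^n_s g(s)\,ds-2n\int_t^T e^{\beta A(s)}Y^n_s(Y^n_s-U_s)^+ds+2\int_t^T e^{\beta A(s)}Y^n_s\,dK^{n+}_s$ minus the martingale term $2\int_t^T e^{\beta A(s)}Y^n_s Z^n_s\,dB_s$. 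The three non-martingale correction terms are handled as follows. Young's inequality bounds $2\int_t^T e^{\beta A(s)}Y^n_s g(s)\,ds$ by $\frac{\beta}{2}\int_t^T e^{\beta A(s)}a^2(s)|Y^n_s|^2 ds+\frac{2}{\beta}\int_t^T e^{\beta A(s)}|g(s)/a(s)|^2 ds$. Adding and subtracting $U_s$ and using $(Y^n_s-U_s)^+\ge 0$, $dK^{n-}_s\ge 0$ gives $-2n\int_t^T e^{\beta A(s)}Y^n_s(Y^n_s-U_s)^+ds\le 2\int_t^T e^{\beta A(s)}U^-_s\,dK^{n-}_s$, which by $dK^{n-}_s\le C\,ds$ and Young is at most $C^2T^2+\sup_{0\le t\le T}e^{2\beta A(t)}|U^-_t|^2$. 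And $dK^{n+}_s=\mathbh{1}_{\{Y^n_s=L_s\}}dK^{n+}_s$ together with $L\le L^+$ gives, for any $\varepsilon>0$, $2\int_t^T e^{\beta A(s)}Y^n_s\,dK^{n+}_s\le 2\sup_{0\le t\le T}\!\bigl(e^{\beta A(t)}L^+_t\bigr)K^{n+}_T\le\varepsilon^{-1}\sup_{0\le t\le T}e^{2\beta A(t)}|L^+_t|^2+\varepsilon|K^{n+}_T|^2$. Taking expectations and controlling $\mathbb{E}\sup_{t}e^{\beta A(t)}|Y^n_t|^2$ by the Burkholder--Davis--Gundy inequality exactly as for (\ref{e31}), one arrives at
\[
\mathbb{E}\Bigl[\sup_{0\le t\le T}e^{\beta A(t)}|Y^n_t|^2+\int_0^T e^{\beta A(t)}\bigl(a^2(t)|Y^n_t|^2+|Z^n_t|^2\bigr)dt\Bigr]\le c_\beta\,\mathbb{E}[\mathcal{R}_n]+c_\beta\,\varepsilon\,\mathbb{E}|K^{n+}_T|^2+c_\beta C^2T^2,
\]
where $\mathcal{R}_n:=e^{\beta A(T)}|\xi|^2+\int_0^T e^{\beta A(t)}|g(t)/a(t)|^2 dt+\sup_{0\le t\le T}e^{2\beta A(t)}\bigl(|L^+_t|^2+|U^-_t|^2\bigr)$, whose expectation is finite and independent of $n$ by $(H1)$, $\frac{g}{a}\in\mathcal{H}^{2}(\beta,a)$, and $(H5)$.

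It then remains to estimate $\mathbb{E}|K^{n+}_T|^2$ and to close the loop. Evaluating (\ref{r1}) at $t=0$ gives $K^{n+}_T=Y^n_0-\xi-\int_0^T g(s)\,ds+K^{n-}_T+\int_0^T Z^n_s\,dB_s$; then $|Y^n_0|^2\le\sup_t e^{\beta A(t)}|Y^n_t|^2$ (as $A(0)=0$ and $A$ is increasing), the It\^{o} isometry, $K^{n-}_T\le CT$, and the Cauchy--Schwarz bound $\bigl|\int_0^T g\,ds\bigr|^2\le\bigl(\int_0^T e^{-\beta A(s)}a^2(s)ds\bigr)\int_0^T e^{\beta A(s)}|g(s)/a(s)|^2 ds\le\frac{1}{\beta}\int_0^T e^{\beta A(s)}|g(s)/a(s)|^2 ds$ give $\mathbb{E}|K^{n+}_T|^2\le c'\bigl(\mathbb{E}\sup_t e^{\beta A(t)}|Y^n_t|^2+\mathbb{E}\int_0^T e^{\beta A(t)}|Z^n_t|^2 dt+\mathbb{E}[\mathcal{R}_n]+C^2T^2\bigr)$ with $c'$ absolute. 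Finally, choosing $\varepsilon$ so small that $c_\beta c'\varepsilon<\frac12$ and inserting this into the previous display, the $\sup_t e^{\beta A}|Y^n|^2$ and $\int e^{\beta A}|Z^n|^2$ contributions brought in through $\mathbb{E}|K^{n+}_T|^2$ get absorbed into the left-hand side; feeding the resulting bound back into the $\mathbb{E}|K^{n+}_T|^2$ estimate yields all four quantities dominated by $C'_\beta\,\mathbb{E}[\mathcal{R}_n]$ (the $n$-independent constant $C^2T^2$ absorbed into $C'_\beta$), uniformly in $n$, which is the claim.

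The hardest point is this closing absorption: the natural bound for $\mathbb{E}|K^{n+}_T|^2$ contains the very quantities $\mathbb{E}\sup_t e^{\beta A(t)}|Y^n_t|^2$ and $\mathbb{E}\int_0^T e^{\beta A(t)}|Z^n_t|^2 dt$ one is trying to bound, so the Young parameter $\varepsilon$ at the upper-reflection term must be kept free and chosen small enough that the feedback constant is $<1$. A second essential point is that Lemma~\ref{l1} is genuinely needed: without the \emph{pointwise} bound $n(Y^n_s-U_s)^+\le C$, one could control $K^{n-}$ only through the equation, as part of the difference $K^{n+}_T-K^{n-}_T$, which does not separate the two reflecting processes, and the penalization term in the It\^{o} expansion could not be converted into the clean, $n$-free quantity $\sup_{0\le t\le T}e^{2\beta A(t)}|U^-_t|^2$.
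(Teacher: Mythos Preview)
Your argument is correct and follows essentially the same route as the paper: It\^{o} on $e^{\beta A(t)}|Y^n_t|^2$, Young on the $g$-term, the inequality $-Y^n_s(Y^n_s-U_s)^+\le U^-_s(Y^n_s-U_s)^+$ combined with Lemma~\ref{l1} for the penalization, the Skorokhod identity $dK^{n+}_s=\mathbh{1}_{\{Y^n_s=L_s\}}dK^{n+}_s$ for the reflection, BDG for the supremum, and then the equation for $K^{n+}_T$ together with $K^{n-}_T\le CT$ to close by absorption. The paper hides the last absorption step behind ``in the same way as (\ref{e4})'', whereas you spell it out explicitly via the free Young parameter $\varepsilon$; otherwise the two proofs are the same.
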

\begin{proof}
It\^{o}'s formula implies for $t\leq T$:
\begin{align*}
&\beta\mathbb{E}\int_{t}^{T}e^{\beta
A(s)}a^2(s)\big|Y_{s}^{n}\big|^{2}ds+
\mathbb{E}\int_{t}^{T}e^{\beta
A(s)}\big|Z_{s}^{n}\big|^{2}ds
\\
&\quad\leq \mathbb{E}e^{\beta A(T)}|\xi|^2 +\frac{\beta}{2}
\mathbb{E}\int_{t}^{T}e^{2\beta A(s)}a^2(s)\big|Y_{s}^{n}\big|^{2}ds
+\frac{2}{\beta}\mathbb{E}\int_{t}^{T}e^{\beta A(s)}
\frac
{|g(s)|^2}{a^2(s)}ds
\\
&\qquad+2\mathbb{E} \Biggl[\sup_{n\geq0}\sup
_{0\leq t\leq
T}n\bigl(Y_t^n-U_t
\bigr)^+\int_{t}^{T}e^{\beta A(s)}U^-_sds
\Biggr] +2\mathbb{E} \Biggl[\int_{t}^{T}e^{\beta A(s)}L_sdK_s^{n+}
\Biggr].
\end{align*}
Here we used the fact that $-nY^n_s(Y^n_s-U_s)^+\leq nU^-(Y^n_s-U_s)^+$
and $dK^{n+}_s=\mathbh{1}_{\{Y^n_s=L_s\}}dK^{n+}_s$.
We conclude, by the Burkholder--Davis--Gundy's inequality, that
\begin{align*}
&\mathbb{E}\sup_{0\leq t\leq T}e^{\beta A(t)}\big|Y^n_t\big|^2+
\mathbb{E}\int_{0}^{T}e^{\beta A(s)}a^2(s)\big|Y_{s}^{n}\big|^{2}ds+
\mathbb {E}\int_{0}^{T}e^{\beta A(s)}\big|Z_{s}^{n}\big|^{2}ds
\\
&\quad\leq c'_p\mathbb{E} \Biggl[e^{\beta A(T)}|
\xi|^2 +\int_{0}^{T}e^{\beta A(s)}
\frac{|g(s)|^2}{a^2(s)}ds
\\
&\qquad+\sup_{0\leq t\leq T}e^{2\beta A(t)}\big|U_t^-\big|^2+
\sup_{0\leq t\leq T}e^{2\beta A(t)}\big|L_{t}^+\big|^{2}+\big|K_{T}^{n+}\big|^2
\Biggr].
\end{align*}
In the same way as (\ref{e4}), we can prove that
\begin{align*}
\mathbb{E}\big|K_{T}^{n+}\big|^2&\leq
\mathcal{C}'_p\mathbb {E} \Biggl[e^{\beta A(T)}|
\xi|^2 +\int_{0}^{T}e^{\beta A(s)}
\frac{|g(s)|^2}{a^2(s)}ds
\\
&\quad+\sup_{0\leq t\leq T}e^{2\beta A(t)}\big|U_t^-\big|^2+
\sup_{0\leq t\leq T}e^{2\beta A(t)}\big|L_{t}^+\big|^{2}
\Biggr].
\end{align*}
We obtain the desired result.
\end{proof}
\begin{lemma}\label{ll}
There exist two $\mathcal{F}_t$-adapted processes $(Y_t)_{t\leq T}$ and
$(K^+_t)_{t\leq T}$ such that $Y^n\searrow Y$, $K^{n+}\nearrow K^+$ and
\[
\mathbb{E} \Bigl[\sup_{0\leq t\leq T}\big|K_{t}^{n+}-K_{t}^{+}\big|^{2}
\Bigr]\xrightarrow[n\to+\infty]{}0.
\]
\end{lemma}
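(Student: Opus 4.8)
The plan is to exploit the monotonicity of the penalized generators $g^n(s,y)=g(s)-n(y-U_s)^+$ in $n$ together with a comparison argument for reflected BSDEs with a lower barrier (the comparison result proved in the Appendix under the stochastic Lipschitz condition). Since the driver $g^n(s,y)$ is nonincreasing in $n$ for each fixed $y$ (larger $n$ subtracts more), the comparison theorem gives $Y^{n+1}_t\le Y^n_t$ for all $t\le T$, $\mathbb{P}$-a.s., so the sequence $(Y^n)$ is pointwise nonincreasing. From Lemma~\ref{lll} the family $(Y^n)$ is bounded in $\mathcal{S}^2(\beta,a)$ uniformly in $n$; in particular, for each $t$ the random variables $Y^n_t$ are bounded below in $L^2$ (for instance by $L_t$, since $Y^n_t\ge L_t$). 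Hence the monotone limit $Y_t:=\lim_{n\to\infty}\downarrow Y^n_t$ exists $\mathbb{P}$-a.s., is $\mathcal{F}_t$-adapted, satisfies $Y_t\ge L_t$, and by monotone convergence together with the uniform bound of Lemma~\ref{lll} and Fatou's lemma, $Y\in\mathcal{S}^2(\beta,a)\cap\mathcal{S}^{2,a}(\beta,a)$.

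Next I would identify the limit of $K^{n+}$. Writing $K^{n+}_t$ from the reflected equation \eqref{r1},
\[
K^{n+}_t = Y^n_0 - Y^n_t - \int_0^t g(s)\,ds + n\int_0^t\bigl(Y^n_s-U_s\bigr)^+ds + \int_0^t Z^n_s\,dB_s,
\]
but a cleaner route is to use the characterization of reflected BSDEs via optimal stopping: with driver $g^n$, one has the Snell-envelope representation
\[
Y^n_t = \operatorname*{ess\,sup}_{\tau\ge t}\mathbb{E}\Biggl[\int_t^\tau\bigl(g(s)-n(Y^n_s-U_s)^+\bigr)ds + L_\tau\mathbh{1}_{\{\tau<T\}} + \xi\mathbh{1}_{\{\tau=T\}}\,\Big|\,\mathcal{F}_t\Biggr].
\]
Since $n(Y^n_s-U_s)^+\le C$ by Lemma~\ref{l1}, the penalizing term $K^{n-}_s=n\int_0^s(Y^n_r-U_r)^+dr$ is a continuous increasing process with $K^{n-}_T\le CT$ uniformly in $n$; moreover $s\mapsto K^{n-}_s$ is $n$-nondecreasing along the subsequence because $Y^n\downarrow$ forces, after passing to the equation, $K^{n+}$ to be increasing in $n$. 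To make the monotonicity of $K^{n+}$ rigorous I would combine the monotone limit of $Y^n$ with the fact, standard in penalization schemes for reflected BSDEs, that $n\mapsto K^{n+}_t$ is nondecreasing for each fixed $t$ (this follows from the comparison of the RBSDEs since a smaller driver forces a larger pushing process to keep $Y$ above $L$). Thus $K^+_t:=\lim_{n\to\infty}\uparrow K^{n+}_t$ exists $\mathbb{P}$-a.s.; it is increasing, $K^+_0=0$, and by Lemma~\ref{lll} $\sup_n\mathbb{E}|K^{n+}_T|^2<\infty$, so by monotone convergence $\mathbb{E}|K^+_T|^2<\infty$ and $K^+\in\mathcal{S}^2$.

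Finally, to upgrade the pointwise convergence of $K^{n+}$ to convergence in $\mathcal{S}^2$, I would use a Dini-type argument: each $K^{n+}$ is continuous and increasing, the limit $K^+$ is continuous (this continuity of $K^+$ needs a short separate argument — that $Y$ is continuous, which in turn follows once we know $Y^n\to Y$ uniformly; alternatively one first proves $K^+$ continuous by showing $Y$ is the difference of a supermartingale and a martingale and has no jumps), and the convergence $K^{n+}_t\uparrow K^+_t$ is monotone. Dini's theorem then gives uniform convergence $\sup_{0\le t\le T}|K^{n+}_t-K^+_t|\to 0$ $\mathbb{P}$-a.s., and since $\sup_t|K^{n+}_t-K^+_t|\le K^+_T+K^{n+}_T\le 2K^+_T\in L^2$, dominated convergence yields $\mathbb{E}[\sup_{0\le t\le T}|K^{n+}_t-K^+_t|^2]\to 0$. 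The main obstacle is the continuity of the limiting process $K^+$ (equivalently, the continuity of $Y$ and the non-degeneracy of the convergence), since Dini's theorem requires it; this is the delicate point and would be handled by first establishing (via a stability estimate using the a priori bounds and Lemma~\ref{l1}) that $Y^n\to Y$ uniformly in $t$, after which continuity of $Y$, and hence of $K^+$ through the equation, follows.
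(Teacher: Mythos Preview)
Your overall strategy coincides with the paper's: invoke the comparison theorem for reflected BSDEs (Theorem~\ref{tc}) to obtain $Y^0_t\ge Y^n_t\ge Y^{n+1}_t$ and $K^{n+}_t\le K^{(n+1)+}_t$, pass to the monotone limits $Y$ and $K^+$, and then appeal to Dini's theorem to upgrade pointwise convergence of $K^{n+}$ to uniform (hence $\mathcal{S}^2$) convergence. So the skeleton is right and matches the paper almost line for line.

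Where you diverge is in your treatment of the continuity of $K^+$, which you correctly flag as the crux for Dini. The paper simply asserts ``since the process $K^+$ is continuous'' and moves on; it does not justify this within the proof of Lemma~\ref{ll}. Your proposed remedy, however, is circular in the paper's logical order: you suggest first proving $Y^n\to Y$ uniformly in $t$, but in the paper this uniform convergence is the content of the lemma \emph{after} Lemma~\ref{l}, whose proof uses Lemma~\ref{l}, which in turn uses Lemma~\ref{ll} (specifically the inequality $dK^{n+}\le dK^+$). So you cannot bootstrap continuity of $K^+$ from uniform convergence of $Y^n$ without reorganizing the whole chain of lemmas. Note also that the parts of Lemma~\ref{ll} actually consumed downstream before the final assembly are only the existence of the monotone limits and the domination $dK^{n+}\le dK^+$; the $\mathcal{S}^2$ convergence of $K^{n+}$ is not used until the proof of Theorem~\ref{t}, by which point uniform convergence of $Y^n$ (and hence continuity of $Y$ and of $K^+$ via the equation) has already been established. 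If you want a self-contained proof, either defer the $\mathcal{S}^2$ claim to after that lemma, or give an independent argument for continuity of $K^+$ (e.g.\ via a direct regularity argument for the limiting increasing process) rather than routing through uniform convergence of $Y^n$.
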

\begin{proof}
The comparison \xch{Theorem \ref{tc}}{theorem \ref{tc}} (below) shows that $Y^0_t\geq Y^n_t\geq
Y_t^{n+1}$ and $K^{n+}_t\leq K^{(n+1)+}_t$ for all $t\leq T$.
Therefore, there exist processes $Y$ and $K^+$ such that, as
$n\rightarrow+\infty$, for all $t\leq T$, $Y^n_t\searrow Y_t$ and
$K^{n+}_t\nearrow K^+_t$. Since the process $K^+$ is continuous, it
follows by Dini's theorem that
\[
\mathbb{E} \Bigl[\sup_{0\leq t\leq T}\big|K_{t}^{n+}-K_{t}^{+}\big|^{2}
\Bigr]\xrightarrow[n\to+\infty]{}0.\qedhere
\]
\end{proof}
\begin{lemma}\label{l}
\[
\mathbb{E} \Bigl[\sup_{0\leq t\leq T}e^{\beta
A(t)}\big|
\bigl(Y_{t}^{n}-U_{t}\bigr)^+\big|^{2}
\Bigr]\xrightarrow[n\to+\infty]{}0.
\]
\end{lemma}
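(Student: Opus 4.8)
The plan is to obtain Lemma~\ref{l} directly from Lemma~\ref{l1}, the a priori bound of Lemma~\ref{lll}, and a dominated-convergence argument; in particular no It\^{o} formula is needed. First, Lemma~\ref{l1} furnishes a constant $C$, \emph{independent of $n$}, such that $n(Y^n_t-U_t)^+\le C$ for all $t\le T$, $\mathbb{P}$-a.s. Dividing by $n$ gives $(Y^n_t-U_t)^+\le C/n$ for all $t\le T$, $\mathbb{P}$-a.s., whence
\[
\sup_{0\le t\le T}e^{\beta A(t)}\big|(Y^n_t-U_t)^+\big|^{2}\le\frac{C^{2}}{n^{2}}\,e^{\beta A(T)},
\]
and the right-hand side tends to $0$ $\mathbb{P}$-a.s. as $n\to+\infty$, since $A(T)<+\infty$ $\mathbb{P}$-a.s.

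Next I need an integrable majorant in order to pass to the limit under the expectation; this is the only point requiring a little care, since $e^{\beta A(T)}$ itself need not be integrable under $(H1)$--$(H6)$, so the bound just obtained is not sufficient on its own. I would instead exploit monotonicity in $n$: as already used in the proof of Lemma~\ref{ll} (via the comparison Theorem~\ref{tc}), $Y^n_t\le Y^0_t$ for every $t\le T$, and since $x\mapsto(x-U_t)^+$ is non-decreasing,
\[
0\le(Y^n_t-U_t)^+\le(Y^0_t-U_t)^+\le|Y^0_t|+U^-_t\qquad\text{for all }t\le T,\ n\ge0 .
\]
Hence
\[
\sup_{0\le t\le T}e^{\beta A(t)}\big|(Y^n_t-U_t)^+\big|^{2}\le 2\sup_{0\le t\le T}e^{\beta A(t)}|Y^0_t|^{2}+2\sup_{0\le t\le T}e^{2\beta A(t)}|U^-_t|^{2}=:\Theta ,
\]
and $\Theta$ is integrable: the first term because $Y^0\in\mathcal{S}^{2}(\beta,a)$ by Lemma~\ref{lll} applied with $n=0$ (whose right-hand side is finite thanks to $\frac{g}{a}\in\mathcal{H}^{2}(\beta,a)$, $(H1)$ and $(H5)$), and the second directly by $(H5)$.

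Finally, the random variable $\sup_{0\le t\le T}e^{\beta A(t)}|(Y^n_t-U_t)^+|^{2}$ converges to $0$ $\mathbb{P}$-a.s. and is dominated by the integrable $\Theta$, so the dominated convergence theorem yields $\mathbb{E}\bigl[\sup_{0\le t\le T}e^{\beta A(t)}|(Y^n_t-U_t)^+|^{2}\bigr]\to 0$, which is the assertion. (Equivalently, this supremum is non-increasing in $n$, so monotone convergence applies just as well.) The main obstacle is therefore a soft one: recognizing that the naive majorant $C^{2}e^{\beta A(T)}/n^{2}$ fails to be uniformly integrable, and that comparison with the $n=0$ penalized solution supplies the correct integrable dominating random variable.
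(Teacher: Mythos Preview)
Your proof is correct and considerably more direct than the paper's argument. The paper does not exploit the quantitative bound $(Y^n_t-U_t)^+\le C/n$ from Lemma~\ref{l1} at this stage; instead it first reduces to the case $\mathbb{E}\sup_t e^{\beta A(t)}|U_t|^2<\infty$, introduces an auxiliary reflected BSDE with \emph{linear} penalization $g(s)-n(y-U_s)$, uses comparison to get $Y^n\le\widetilde Y^n$ and $d\widetilde K^n\le dK^+$, writes an explicit conditional-expectation formula for $\widetilde Y^n_\tau$ at an arbitrary stopping time $\tau$, and shows it converges in $L^1$ to $\xi\mathbh 1_{\{\tau=T\}}+U_\tau\mathbh 1_{\{\tau<T\}}$; this yields $Y_\tau\le U_\tau$ a.s., and the section theorem of Dellacherie--Meyer upgrades it to $Y_t\le U_t$ for all $t$. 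Only then do Dini's theorem and dominated convergence finish the job. You bypass all of this machinery by reading off pointwise convergence directly from Lemma~\ref{l1} and supplying the integrable majorant $\Theta$ via comparison with $Y^0$ and $(H5)$, exactly as you do. What the paper's longer route buys is reusability: the same stopping-time technique is invoked again in Lemma~\ref{l3} (the completely separated barriers case), where no analogue of Lemma~\ref{l1} is available. For Lemma~\ref{l} itself, however, your argument is shorter and loses nothing.
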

\begin{proof}
Since $Y_t\leq Y_t^n\leq Y_t^0$, we can replace $U_t$ by $U_t\vee Y^0$;
that is, we may assume that
$\mathbb{E}\sup_{0\leq t\leq T}e^{\beta A(t)} \llvert U_{t} \rrvert ^{2}<+\infty$.

Let $(\widetilde{Y}^n,\widetilde{Z}^n,\widetilde{K}^n)$ be the solution
to the following Reflected BSDE associated with $(\xi, g-n(y-U), L)$:
\begin{eqnarray}
\label{d2} &&\hspace{-1cm} \displaystyle \left\{ %
\begin{array}{@{}ll}
\widetilde{Y}^n_{t}=\xi+\displaystyle\int_{t}^{T}\bigl(g(s)-n\bigl(\widetilde
{Y}^n_s-U_s\bigr)\bigr)ds+\widetilde{K}^n_{T}-\widetilde{K}^n_{t}-\displaystyle
\int_{t}^{T}\widetilde{Z}^n_{s}dB_{s}& \hbox{}\\
\widetilde{Y}^n_{t}\geq L_t,\ \forall t\leq T \ \mbox{and}\
\displaystyle\int_{0}^{T}\bigl(\widetilde{Y}^n_{t}-L_{t}\bigr)d\widetilde
{K}^n_{t}=0. & \hbox{}
\end{array}
\right.
\end{eqnarray}
The comparison \xch{Theorem \ref{tc}}{theorem \ref{tc}} shows that $Y^n\leq\widetilde{Y}^n$
and $d\widetilde{K}^n\leq dK^{n+} \leq dK^+$.
Let $\tau\leq T$ be a stopping time. Then we can write
\[
\widetilde{Y}^n_{\tau}=\mathbb{E} \Biggl[e^{-n(T-\tau)}
\xi+\int_{\tau
}^{T}e^{-n(s-\tau)}
\bigl(g(s)+nU_s\bigr)ds+\int_{\tau}^{T}e^{-n(s-\tau
)}d
\widetilde{K}^n_{s}|\mathcal{F}_{\tau} \Biggr].
\]
Since $\mathbb{E}\sup_{0\leq t\leq T}e^{\beta A(t)}U_{t}^{2}<+\infty$,
we obtain
\[
e^{-n(T-\tau)}\xi+n\int_{\tau}^{T}e^{-n(s-\tau)}U_{s}ds
\xrightarrow[n\to+\infty]{} \xi\mathbh{1}_{\tau=T}+U_{\tau}\mathbh
{1}_{\tau<T} \quad\mathbb{P}\hbox{-}\mathrm{a.s.} \; \mathrm{in} \;
\mathcal{L}^2
\]
and the conditional expectation converges also in $\mathcal{L}^2$. Moreover,
\[
\Biggl\llvert \int_{\tau}^{T}e^{-n(s-\tau)} g(s)ds
\Biggr\rrvert ^2 \leq\int_{\tau}^{T}e^{\beta A(s)}
\biggl\llvert \frac{g(s)}{a(s)} \biggr\rrvert ^2ds\int
_{\tau}^{T}e^{-2n(s-\tau)}e^{-\beta A(s)}a^2(s)ds.
\]
Then
\[
\int_{\tau}^{T}e^{-n(s-\tau)} g(s)ds\xrightarrow[n
\to+\infty]{}0 \quad \mathbb{P}\hbox{-}\mbox{a.s. in }\mathcal{L}^2.
\]
In addition,
\[
0\leq\int_{\tau}^{T}e^{-n(s-\tau)}d
\widetilde{K}_{s}^{n} \leq\int_{\tau}^{T}e^{-n(s-\tau)}dK^+_{s}
\xrightarrow[n\to+\infty ]{}0\mbox{ in }\mathcal{L}^1.
\]
Consequently,
\[
\widetilde{Y}^n_{\tau}\xrightarrow[n\to+\infty]{} \xi
\mathbh{1}_{\tau=T}+U_{\tau}\mathbh{1}_{\tau<T} \quad\mathbb
{P}\mbox{-a.s. in }\mathcal{L}^1.
\]
Therefore, $Y_\tau\leq U_\tau$ $\mathbb{P}$-a.s. We deduce, from
Theorem 86 page 220 in Dellacherie and Meyer \cite{DM}, that $Y_t\leq
U_t$ for all $t\leq T$ $\mathbb{P}$-a.s and then $e^{\beta
A(t)}(Y^n_t-U_t)^+\searrow0$ for all $t\leq T$ $\mathbb{P}$-a.s.
By Dini's theorem, we have $\sup_{0\leq t\leq T}e^{\beta
A(t)}(Y^n_t-U_t)^+\searrow0$ $\mathbb{P}\mbox{-a.s.}$
and the result follows from the Lebesgue's dominated convergence theorem.
\end{proof}
\begin{lemma}
There exist two processes $(Z_t)_{t\leq T}$ and $(K^-_t)_{t\leq T}$ such that
\[
\mathbb{E}\int_{0}^{T}e^{\beta A(t)}a^2(t)\big|Y_{t}^{n}-Y_{t}\big|^2dt
+\mathbb{E}\int_{0}^{T}e^{\beta A(t)}\big|Z_{t}^{n}-Z_{t}\big|^{2}dt
\xrightarrow[n\to+\infty]{}0.
\]
Moreover,
\[
\mathbb{E}\sup_{0\leq t\leq T}e^{\beta A(t)}\big|Y_{t}^{n}-Y_{t}\big|^{2}
+\mathbb{E}\sup_{0\leq t\leq T}\big|K_{t}^{n-}-K_{t}^{-}\big|^{2}
\xrightarrow[n\to+\infty]{}0.
\]
\end{lemma}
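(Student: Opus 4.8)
The processes $Y$ and $K^{+}$ being those furnished by Lemma~\ref{ll}, the plan is to produce $Z$ and $K^{-}$ and to establish the four convergences in three steps. \emph{Step 1: convergence of $Y^{n}$.} I would first note that, since $Y^{n}\searrow Y$ pointwise and $Y_{t}\le Y_{t}^{n}\le Y_{t}^{0}$, one has $|Y_{t}^{n}-Y_{t}|^{2}\le(Y_{t}^{0}-Y_{t})^{2}\le2|Y_{t}^{0}|^{2}+2|Y_{t}|^{2}$, which by Lemma~\ref{lll} (with $n=0$) and Fatou's lemma has finite norm in $\mathcal{S}^{2}(\beta,a)$ and in $\mathcal{S}^{2,a}(\beta,a)$. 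Dominated convergence then gives
\[
\mathbb{E}\sup_{0\le t\le T}e^{\beta A(t)}|Y_{t}^{n}-Y_{t}|^{2}\xrightarrow[n\to+\infty]{}0
\quad\mbox{and}\quad
\mathbb{E}\int_{0}^{T}e^{\beta A(t)}a^{2}(t)|Y_{t}^{n}-Y_{t}|^{2}dt\xrightarrow[n\to+\infty]{}0 ,
\]
so that $\mathbb{E}\sup_{t}|Y_{t}^{n}-Y_{t}^{m}|^{2}\to0$ as $n,m\to+\infty$.

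\emph{Step 2: $(Z^{n})$ is Cauchy in $\mathcal{H}^{2}(\beta,a)$.} I would apply It\^{o}'s formula to $e^{\beta A(s)}|Y_{s}^{n}-Y_{s}^{m}|^{2}$ on $[t,T]$; since $Y_{T}^{n}=Y_{T}^{m}=\xi$ and the $g$-terms cancel, this reads
\begin{align*}
&e^{\beta A(t)}|Y_{t}^{n}-Y_{t}^{m}|^{2}+\beta\int_{t}^{T}e^{\beta A(s)}a^{2}(s)|Y_{s}^{n}-Y_{s}^{m}|^{2}ds+\int_{t}^{T}e^{\beta A(s)}|Z_{s}^{n}-Z_{s}^{m}|^{2}ds\\
&\quad=2\int_{t}^{T}e^{\beta A(s)}(Y_{s}^{n}-Y_{s}^{m})\bigl(-n(Y_{s}^{n}-U_{s})^{+}+m(Y_{s}^{m}-U_{s})^{+}\bigr)ds\\
&\qquad+2\int_{t}^{T}e^{\beta A(s)}(Y_{s}^{n}-Y_{s}^{m})\bigl(dK_{s}^{n+}-dK_{s}^{m+}\bigr)-2\int_{t}^{T}e^{\beta A(s)}(Y_{s}^{n}-Y_{s}^{m})(Z_{s}^{n}-Z_{s}^{m})dB_{s}.
\end{align*}
The reflection term is nonpositive, because $dK_{s}^{n+}$ is carried by $\{Y_{s}^{n}=L_{s}\}$ and $Y_{s}^{m}\ge L_{s}$, so $(Y_{s}^{n}-Y_{s}^{m})dK_{s}^{n+}=(L_{s}-Y_{s}^{m})dK_{s}^{n+}\le0$, and symmetrically for $dK_{s}^{m+}$. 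For the drift term I would write $Y_{s}^{n}-Y_{s}^{m}=(Y_{s}^{n}-U_{s})-(Y_{s}^{m}-U_{s})$ and use $-xx^{+}=-(x^{+})^{2}\le0$, $yx^{+}\le y^{+}x^{+}$ and the uniform bound $\sup_{n}n(Y_{s}^{n}-U_{s})^{+}\le C$ of Lemma~\ref{l1} to get the pointwise inequality
\[
(Y_{s}^{n}-Y_{s}^{m})\bigl(-n(Y_{s}^{n}-U_{s})^{+}+m(Y_{s}^{m}-U_{s})^{+}\bigr)\le C\bigl((Y_{s}^{n}-U_{s})^{+}+(Y_{s}^{m}-U_{s})^{+}\bigr).
\]
Taking expectations (the stochastic integral is a true martingale, by the a priori bounds of Lemma~\ref{lll}) would then leave
\[
\mathbb{E}\int_{0}^{T}e^{\beta A(s)}|Z_{s}^{n}-Z_{s}^{m}|^{2}ds\le2C\,\mathbb{E}\int_{0}^{T}e^{\beta A(s)}\bigl((Y_{s}^{n}-U_{s})^{+}+(Y_{s}^{m}-U_{s})^{+}\bigr)ds .
\]
Since $0\le(Y_{s}^{n}-U_{s})^{+}\le(Y_{s}^{0})^{+}+U_{s}^{-}$, which furnishes an $n$-independent majorant with finite weighted integral by Lemma~\ref{lll} and $(H5)$, while $(Y_{s}^{n}-U_{s})^{+}\searrow0$ by Lemma~\ref{l}, the right-hand side tends to $0$ by dominated convergence. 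Hence $(Z^{n})$ converges in $\mathcal{H}^{2}(\beta,a)$ to some $Z$ with $\mathbb{E}\int_{0}^{T}e^{\beta A(t)}|Z_{t}^{n}-Z_{t}|^{2}dt\to0$.

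\emph{Step 3: convergence of $(K^{n-})$.} Rewriting (\ref{r1}) in forward form and using $K_{t}^{n-}=n\int_{0}^{t}(Y_{s}^{n}-U_{s})^{+}ds$ gives $K_{t}^{n-}=Y_{t}^{n}-Y_{0}^{n}+\int_{0}^{t}g(s)ds+K_{t}^{n+}-\int_{0}^{t}Z_{s}^{n}dB_{s}$, and subtracting the corresponding identity with $m$ in place of $n$ yields
\[
K_{t}^{n-}-K_{t}^{m-}=(K_{t}^{n+}-K_{t}^{m+})+(Y_{t}^{n}-Y_{t}^{m})-(Y_{0}^{n}-Y_{0}^{m})-\int_{0}^{t}(Z_{s}^{n}-Z_{s}^{m})dB_{s}.
\]
By the Burkholder--Davis--Gundy inequality (and $e^{\beta A}\ge1$), $\mathbb{E}\sup_{t}|K_{t}^{n-}-K_{t}^{m-}|^{2}$ is controlled by a constant times $\mathbb{E}\sup_{t}|K_{t}^{n+}-K_{t}^{m+}|^{2}+\mathbb{E}\sup_{t}|Y_{t}^{n}-Y_{t}^{m}|^{2}+\mathbb{E}\int_{0}^{T}|Z_{s}^{n}-Z_{s}^{m}|^{2}ds$, each term of which vanishes as $n,m\to+\infty$ by Lemma~\ref{ll}, Step~1 and Step~2, respectively. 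Therefore $(K^{n-})$ is Cauchy in $\mathcal{S}^{2}$ with limit $K^{-}$, which, extracting an a.s.\ uniformly convergent subsequence, is continuous, nondecreasing and satisfies $K_{0}^{-}=0$. Combining the three steps delivers all the asserted convergences.

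I expect the main obstacle to be the handling of the two cross terms in the It\^{o} expansion of $e^{\beta A}|Y^{n}-Y^{m}|^{2}$. The reflection term is disposed of by the minimality (``flatness off the barriers'') of $K^{n\pm}$; the penalization term, however, features the a priori \emph{unbounded} quantities $n(Y^{n}-U)^{+}$, and it is precisely the uniform estimate of Lemma~\ref{l1} that converts it into the tractable $C(Y^{n}-U)^{+}$. One then still has to push the convergence $(Y^{n}-U)^{+}\to0$ of Lemma~\ref{l} through the stochastic weight $e^{\beta A}$, which calls for an $n$-independent dominating function assembled from the a priori bounds of Lemma~\ref{lll} and hypothesis $(H5)$; that is where the estimate is most delicate.
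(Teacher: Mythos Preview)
Your Steps~2 and~3 are essentially the paper's argument: It\^o on $e^{\beta A}|Y^{n}-Y^{p}|^{2}$, the sign observation $(Y^{n}-Y^{p})\,d(K^{n+}-K^{p+})\le 0$, a bound on the penalisation drift combining Lemmas~\ref{l1} and~\ref{l}, hence $(Z^{n})$ Cauchy in $\mathcal{H}^{2}(\beta,a)$, and finally $K^{-}$ read off from the equation. One caution on your Step~2: the majorant $e^{\beta A(s)}\bigl[(Y^{0}_{s})^{+}+U_{s}^{-}\bigr]$ you propose for dominated convergence is not obviously $ds\times d\mathbb{P}$-integrable, since nothing in the hypotheses controls $\mathbb{E}\int_{0}^{T}e^{\beta A(s)}\,ds$. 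The paper handles this differently: it keeps the drift in the form $e^{\beta A(s)}(Y^{p}_{s}-U_{s})^{+}\cdot n(Y^{n}_{s}-U_{s})^{+}$, pulls out $\sup_{s}e^{\beta A(s)}(Y^{p}_{s}-U_{s})^{+}$, and pairs it by Cauchy--Schwarz with $K^{n-}_{T}=\int_{0}^{T}n(Y^{n}_{s}-U_{s})^{+}\,ds$, whose second moment is bounded uniformly in $n$ by Lemma~\ref{l1}.

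The genuine gap is in Step~1. To get $\mathbb{E}\sup_{t}e^{\beta A(t)}|Y^{n}_{t}-Y_{t}|^{2}\to 0$ by dominated convergence you would need $\sup_{t}e^{\beta A(t)}|Y^{n}_{t}-Y_{t}|^{2}\to 0$ almost surely, and pointwise monotone convergence $Y^{n}_{t}\searrow Y_{t}$ does \emph{not} yield this unless the limit $Y$ is already known to be continuous (that is exactly what Dini's theorem would require). At this stage $Y$ is only the pointwise decreasing limit of continuous processes, hence merely upper semicontinuous; its continuity is a \emph{consequence} of the uniform convergence you are asserting, so the reasoning is circular. The paper avoids this by reversing your order: it first runs the It\^o estimate (your Step~2) to obtain the Cauchy property in $\mathcal{S}^{2,a}(\beta,a)\times\mathcal{H}^{2}(\beta,a)$, and only \emph{then} applies the Burkholder--Davis--Gundy inequality to upgrade $Y^{n}$ to Cauchy in $\mathcal{S}^{2}(\beta,a)$. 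Your proof is repaired by the same reordering: keep only the $\mathcal{S}^{2,a}$-convergence in Step~1 (which \emph{is} legitimate via dominated convergence on $[0,T]\times\Omega$), carry out Step~2, then derive the sup-norm Cauchy property from the It\^o identity and Burkholder--Davis--Gundy, and finally run Step~3.
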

\begin{proof}
For all $n\geq p\geq0$ and $t\leq T$, applying It\^{o}'s formula and
taking expectation yields that
\begin{align*}
&\mathbb{E} \Biggl[e^{\beta A(t)}\big|Y_{t}^{n}\,{-}\,Y_{t}^p\big|^{2}
\,{+}\,\beta\int_{t}^{T}e^{\beta A(s)}a^2(s)\big|Y_{s}^{n}\,{-}\,Y_{s}^p\big|^2ds\,{+}
\int_{t}^{T}e^{\beta A(s)}\big|Z_{s}^{n}\,{-}\,Z_{s}^p\big|^2ds
\Biggr]
\\
&\quad\leq2\mathbb{E} \Biggl[\int_{t}^{T}e^{\beta
A(s)}
\bigl(Y_{s}^{p}-U_{s}\bigr)^+n
\bigl(Y_{s}^{n}- U_{s}\bigr)^+ds \Biggr]
\\
&\qquad+2\mathbb{E} \Biggl[\int_{t}^{T}e^{\beta
A(s)}
\bigl(Y_{s}^{n}-U_{s}\bigr)^+p
\bigl(Y_{s}^{p}- U_{s}\bigr)^+ds \Biggr]
\\
&\quad\leq\mathbb{E} \Bigl[\sup_{0\leq t\leq T}
\bigl(e^{\beta
A(t)}\bigl(Y_{t}^{p}-U_{t}
\bigr)^{+}\bigr)^{2} \Bigr]^{\frac{1}{2}} \mathbb{E}
\Biggl[ \Biggl(\int_{t}^{T}n\bigl(Y_{s}^{n}-U_{s}
\bigr)^{+}ds \Biggr)^{2} \Biggr]^{\frac{1}{2}}
\\
&\qquad+\mathbb{E} \Bigl[\sup_{0\leq t\leq T}
\bigl(e^{\beta
A(t)}\bigl(Y_{t}^{n}-U_{t}
\bigr)^{+}\bigr)^{2} \Bigr]^{\frac{1}{2}} \mathbb{E}
\Biggl[ \Biggl(\int_{t}^{T}p\bigl(Y_{s}^{p}-U_{s}
\bigr)^{+}ds \Biggr)^{2} \Biggr]^{\frac{1}{2}}
\end{align*}
since $(Y_{s}^{n}-Y_{s}^p)d(K_{s}^{n+}- K_{s}^{p+})\leq0$.
Therefore, using Lemmas \ref{l1} and \ref{l}, we obtain
\[
\mathbb{E}\int_{0}^{T}e^{\beta A(s)}a^2(s)\big|Y_{s}^{n}-Y_{s}^p\big|^2ds
+\mathbb{E}\int_{0}^{T}e^{\beta A(s)}\big|Z_{s}^{n}-Z_{s}^p\big|^{2}ds
\xrightarrow[n,p\to+\infty]{}0.
\]
It follows that $(Z^n)_{n\geq0}$ is a Cauchy sequence in complete
space $\mathcal{H}^2(\beta,a)$.
Then there exists an $\mathcal{F}_t$-progressively measurable process
$(Z_t)_{t\leq T}$ such that the sequence $(Z^n)_{n\geq0}$ tends toward
$Z$ in $\mathcal{H}^2(\beta,a)$.
On the other hand, by the Burkholder--Davis--Gundy's inequality, one
can derive that
\begin{align*}
&\mathbb{E}\sup_{0\leq t\leq T}e^{\beta
A(t)}\big|Y_{t}^{n}-Y_{t}^p\big|^{2}
\\
&\quad\leq\mathbb{E} \Bigl[\sup_{0\leq t\leq T}\bigl(e^{\beta
A(t)}
\bigl(Y_{t}^{p}-U_{t}\bigr)^{+}
\bigr)^{2} \Bigr]^{\frac{1}{2}} \mathbb{E} \Biggl[ \Biggl(\int
_{t}^{T}n\bigl(Y_{s}^{n}-U_{s}
\bigr)^{+}ds \Biggr)^{2} \Biggr]^{\frac{1}{2}}
\\
&\qquad+\mathbb{E} \Bigl[\sup_{0\leq t\leq T}
\bigl(e^{\beta
A(t)}\bigl(Y_{t}^{n}-U_{t}
\bigr)^{+}\bigr)^{2} \Bigr]^{\frac{1}{2}} \mathbb{E}
\Biggl[ \Biggl(\int_{t}^{T}p\bigl(Y_{s}^{p}-U_{s}
\bigr)^{+}ds \Biggr)^{2} \Biggr]^{\frac{1}{2}}
\\
&\qquad+\frac{1}{2}\mathbb{E}\sup_{0\leq t\leq T}e^{\beta
A(t)}\big|Y_{t}^{n}-Y_{t}^p\big|^{2}
+2c^2\mathbb{E}\int_{t}^{T}e^{\beta A(s)}\big|Z_{s}^{n}-Z_{s}^p\big|^2ds
\end{align*}
where $c$ is a universal 
non-negative constant. It follows that
\[
\mathbb{E}\sup_{0\leq t\leq T}e^{\beta
A(t)}\big|Y_{t}^{n}-Y_{t}^p\big|^{2}
\xrightarrow[n,p\to+\infty]{}0
\]
and then
\[
\mathbb{E} \Biggl[\sup_{0\leq t\leq T}e^{\beta A(t)}\big|Y_{t}^{n}-Y_{t}\big|^{2}
+\int_{0}^{T}e^{\beta A(t)}a^2(t)\big|Y_{t}^{n}-Y_{t}\big|^2dt
\Biggr]\xrightarrow[n\to+\infty]{}0.
\]
Now, we set
\[
K^-_t=Y_t-Y_0+\int_0^tg(s)ds+K^+_t-K^+_0-
\int_0^tZ_sdB_s.
\]
One can show, at least for a subsequence (which we still index by $n$), that
\[
\mathbb{E}\sup_{0\leq t\leq T}\big|K_{t}^{n-}-K_{t}^{-}\big|^{2}
\xrightarrow [n\to+\infty]{}0.
\]
The proof is completed.
\end{proof}
\begin{proof}[Proof of Theorem \ref{t}]
Obviously, the process $(Y_t,Z_t,K^+_t,K^-_t)_{t\leq T}$ satisfies, for all
$t\leq T$,
\[
Y_{t}=\xi+\int_{t}^{T} g(s)ds+
\bigl(K_{T}^{+}-K_{t}^{+}\bigr)-
\bigl(K_{T}^{-}-K_{t}^{-}\bigr)-\int
_{t}^{T}Z_{s}dB_{s}.
\]
Since $Y^n_t\geq L_t$ and from Lemma \ref{l} we have $L_t\leq Y_t \leq U_t$.

In the following, we want to show that
\[
\int_{0}^{T}(Y_{t}-L_{t})dK_{t}^{+}=
\int_{0}^{T}(U_{t}-Y_{t})dK_{t}^{-}=0
\quad\mathbb{P}\mbox{-a.s.}
\]
Note that
\[
\int_{0}^{T}(Y_{t}-L_{t})dK_{t}^{+}=
\int_{0}^{T}\bigl(Y_{t}-Y_{t}^n
\bigr)dK_{t}^{+}+\int_{0}^{T}
\bigl(Y_{t}^n-L_{t}\bigr) \bigl(dK_{t}^{+}-dK_{t}^{n+}
\bigr).
\]
Let $\omega\in\varOmega$ be fixed. It follows from Lemma \ref{ll} that,
for any $\varepsilon>0$, there exists $n(\omega)$ such that $\forall
n\geq n(\omega)$, $Y_t(\omega)\leq Y_t^n(\omega)+\varepsilon$. Hence
\begin{equation}
\label{f} \int_{0}^{T}\bigl(Y_{t}(
\omega)-Y_{t}^n(\omega)\bigr)dK^{+}_{t}(
\omega)\leq \varepsilon K^{+}_{T}(\omega).
\end{equation}
On the other hand, since the function $(Y_{t}(\omega)-L_{t}(\omega))_{t\leq T}$
is continuous, then there exists a sequence of non-negative step
functions $(f^m(\omega))_{m\geq0}$ which converges uniformly on
$[0,T]$ to $Y_{t}(\omega)-L_{t}(\omega)$. That is
\[
\big|Y_{t}(\omega)-L_{t}(\omega)-f_t^m(
\omega)\big|<\varepsilon.
\]
It follows that
\begin{align*}
&\int_{0}^{T}\bigl(Y_{t}(
\omega)-L_{t}(\omega)\bigr)d \bigl(K_{t}^{+}(
\omega )-K_{t}^{n+}(\omega) \bigr)
\\
&\quad\leq\varepsilon\bigl(K_{T}^{+}(\omega)+K_{T}^{n+}(
\omega)\bigr)+\int_{0}^{T}f_t^m(
\omega)d \bigl(K_{t}^{+}(\omega)-K_{t}^{n+}(
\omega) \bigr).
\end{align*}
Further,
\[
\varepsilon\bigl(K_{T}^{+}(\omega)+K_{T}^{n+}(
\omega)\bigr)\xrightarrow[n\to +\infty]{}2\varepsilon K_T^+(\omega)
\]
and, since $(f^m(\omega))_{m\geq0}$ is a step function,
\[
\int_{0}^{T}f_t^m(
\omega)d\bigl(K_{t}^{+}(\omega)-K_{t}^{n+}(
\omega )\bigr)\xrightarrow[m\to+\infty]{}0.
\]
Therefore, we have
\[
\limsup_{n\to+\infty}\int_{0}^{T}
\bigl(Y_{t}^n-L_{t}\bigr)d\bigl(K_{t}^{+}-K_{t}^{n+}
\bigr)\leq2\varepsilon K_T^+(\omega).
\]
From (\ref{f}) we deduce that
\[
\int_{0}^{T}(Y_{t}-L_{t})dK_{t}^{+}
\leq3\varepsilon K_T^+(\omega).
\]
The arbitrariness of $\varepsilon$ and $Y\geq L$, show that $\int_{0}^{T}(Y_{t}-L_{t})dK_{t}^{+}=0$.
Further, by Lemma \ref{ll} and the result treated on p. 465 of Saisho
\cite{YS} we can write
\begin{equation}
\label{e7} \int_{0}^{T}\bigl(U_{s}-Y^n_{s}
\bigr)n\bigl(Y^n_{s}-U_{s}\bigr)ds\xrightarrow[n
\to+\infty ]{}\int_{0}^{T}(U_{s}-Y_{s})dK^{-}_{s}.
\end{equation}
Since $\int_{0}^{T}(U_{s}-Y^n_{s})n(Y^n_{s}-U_{s})ds=\int_{0}^{T}(U_{s}-Y^n_{s})dK^{n-}_{s}\leq0$ for each $n\geq0$ $\mathbb{P}$-a.s.
and for each $n,m\geq0$, $n\neq m$,
\[
\mathbb{E} \Biggl[ \Biggl\llvert \int_0^T
\bigl(Y_{s}^{n}-Y_{s}^m
\bigr)dK^{m-}_s \Biggr\rrvert \Biggr] \leq\mathbb{E} \Bigl[
\sup_{0\leq t\leq T}e^{\beta
A(t)}\big|Y_{t}^{n}-Y_{t}^m\big|K^{m-}_T
\Bigr]\xrightarrow[n,m\to+\infty]{}0.
\]
Then we have
\begin{equation}
\label{e8} \limsup_{n\rightarrow+\infty}\int_0^T
\bigl(U_{s}-Y^n_{s}\bigr)dK^{n-}_t
\leq 0\quad\mathbb{P}\mbox{-a.s.}
\end{equation}
Combining (\ref{e7}) and (\ref{e8}), we get $\int_0^T(U_{s}-Y_{s})dK^{-}_s\leq0\ \mathbb{P}\mbox{-a.s.}$ Noting that
$Y\leq U$, we conclude that $\int_0^T(U_{s}-Y_{s})dK^{-}_s=0$.
Consequently, $(Y_t,Z_t,K^+_t,K^-_t)$ is the solution to (\ref{a})
associated to the data $(\xi, g, L, U)$.
\end{proof}

We can now state the main result:
\begin{thm}\label{fpt}
Assume $(H1)$--$(H6)$ hold for a sufficient large $\beta$. Then
DRBSDE~(\ref{a}) has a unique solution $(Y,Z,K^+,K^-)$ that belongs to
$(\mathcal{S}^{2}(\beta,a)\cap\mathcal{S}^{2,a}(\beta,a))\times
\mathcal{H}^{2}(\beta,a)\times\mathcal{S}^{2}\times\mathcal{S}^{2}$.
\end{thm}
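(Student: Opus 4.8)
The plan is to obtain the solution by a Banach fixed point argument built on Theorem~\ref{t}, in the spirit of the stochastic Lipschitz theory of Bender--Kohlmann and El Karoui--Huang, where the weight $e^{\beta A(t)}$ with $\beta$ large plays the role of the classical small-time-interval contraction. Concretely, I would define a map $\Phi$ on the Banach space $\mathfrak{B}^{2}$ as follows. Given $(U,V)\in\mathfrak{B}^{2}$, set $g(t):=f(t,U_{t},V_{t})$. By $(H2)$ and the triangle inequality $|g(t)|\le|f(t,0,0)|+\mu(t)|U_{t}|+\gamma(t)|V_{t}|$, so dividing by $a(t)$ and using $(H3)$ (which gives $\mu(t)/a(t)\le a(t)$ since $\mu\le a^{2}$, and $\gamma(t)/a(t)\le1$ since $\gamma\le a$) one gets $|g(t)|/a(t)\le|f(t,0,0)|/a(t)+a(t)|U_{t}|+|V_{t}|$, whence $g/a\in\mathcal{H}^{2}(\beta,a)$ by $(H4)$ and $(U,V)\in\mathfrak{B}^{2}$. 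Theorem~\ref{t} then produces a unique quadruple $(Y,Z,K^{+},K^{-})\in(\mathcal{S}^{2}(\beta,a)\cap\mathcal{S}^{2,a}(\beta,a))\times\mathcal{H}^{2}(\beta,a)\times\mathcal{S}^{2}\times\mathcal{S}^{2}$ solving the DRBSDE with data $(\xi,g,L,U)$, and I put $\Phi(U,V):=(Y,Z)\in\mathfrak{B}^{2}$.

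The core step is to show that $\Phi$ is a contraction once $\beta$ is large. Take $(U^{i},V^{i})\in\mathfrak{B}^{2}$, $i=1,2$, with outputs $(Y^{i},Z^{i},K^{i+},K^{i-})$, and write $\bar U=U^{1}-U^{2}$, $\bar Y=Y^{1}-Y^{2}$, etc. Applying It\^o's formula to $e^{\beta A(t)}|\bar Y_{t}|^{2}$ on $[t,T]$ and using $\bar Y_{T}=0$ gives
\[
e^{\beta A(t)}|\bar Y_{t}|^{2}+\int_{t}^{T}\beta e^{\beta A(s)}a^{2}(s)|\bar Y_{s}|^{2}ds+\int_{t}^{T}e^{\beta A(s)}|\bar Z_{s}|^{2}ds=\int_{t}^{T}2e^{\beta A(s)}\bar Y_{s}\bigl(g^{1}(s)-g^{2}(s)\bigr)ds+\int_{t}^{T}2e^{\beta A(s)}\bar Y_{s}\,d\bigl(\bar K^{+}_{s}-\bar K^{-}_{s}\bigr)-\int_{t}^{T}2e^{\beta A(s)}\bar Y_{s}\bar Z_{s}\,dB_{s}.
\]
The step requiring care is that the reflection term is nonpositive: using $dK^{i+}_{s}=\mathbh{1}_{\{Y^{i}_{s}=L_{s}\}}dK^{i+}_{s}$ with $Y^{1},Y^{2}\ge L$, on $\{Y^{1}_{s}=L_{s}\}$ one has $\bar Y_{s}=L_{s}-Y^{2}_{s}\le0$ and on $\{Y^{2}_{s}=L_{s}\}$ one has $-\bar Y_{s}=L_{s}-Y^{1}_{s}\le0$, so $\bar Y_{s}\,d\bar K^{+}_{s}\le0$; symmetrically, $dK^{i-}_{s}=\mathbh{1}_{\{Y^{i}_{s}=U_{s}\}}dK^{i-}_{s}$ together with $Y^{1},Y^{2}\le U$ gives $-\bar Y_{s}\,d\bar K^{-}_{s}\le0$. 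For the generator term, $(H2)$ gives $|g^{1}(s)-g^{2}(s)|\le\mu(s)|\bar U_{s}|+\gamma(s)|\bar V_{s}|$, and Young's inequality applied to $a(s)|\bar Y_{s}|$ against $\frac{\mu(s)}{a(s)}|\bar U_{s}|$ and $\frac{\gamma(s)}{a(s)}|\bar V_{s}|$, combined with $(H3)$ (so $\mu^{2}/a^{2}\le a^{2}$ and $\gamma^{2}/a^{2}\le1$), yields for any $\alpha>0$ the bound $2|\bar Y_{s}|\,|g^{1}(s)-g^{2}(s)|\le2\alpha a^{2}(s)|\bar Y_{s}|^{2}+\alpha^{-1}a^{2}(s)|\bar U_{s}|^{2}+\alpha^{-1}|\bar V_{s}|^{2}$.

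Localizing, taking expectations (the stochastic integral is a true martingale thanks to the integrability in $\mathfrak{B}^{2}\times\mathcal{S}^{2}\times\mathcal{S}^{2}$ supplied by Theorem~\ref{t}, or equivalently by Lemma~1 applied to each $(Y^{i},Z^{i},K^{i\pm})$), and dropping the nonpositive reflection term gives
\[
(\beta-2\alpha)\,\mathbb{E}\int_{0}^{T}e^{\beta A(s)}a^{2}(s)|\bar Y_{s}|^{2}ds+\mathbb{E}\int_{0}^{T}e^{\beta A(s)}|\bar Z_{s}|^{2}ds\le\tfrac{1}{\alpha}\bigl(\|a\bar U\|_{\beta}^{2}+\|\bar V\|_{\beta}^{2}\bigr).
\]
Choosing $\alpha>1$ (say $\alpha\ge4$) and then $\beta\ge2\alpha+1$, large enough that Theorem~\ref{t} and Lemma~1 apply, the left-hand side dominates $\|a\bar Y\|_{\beta}^{2}+\|\bar Z\|_{\beta}^{2}=\|\Phi(U^{1},V^{1})-\Phi(U^{2},V^{2})\|_{\beta}^{2}$, so $\Phi$ is a contraction on $\mathfrak{B}^{2}$ with constant $\alpha^{-1/2}<1$. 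By the Banach fixed point theorem there is a unique $(Y,Z)\in\mathfrak{B}^{2}$ with $\Phi(Y,Z)=(Y,Z)$; the associated quadruple $(Y,Z,K^{+},K^{-})$ from Theorem~\ref{t} then solves the DRBSDE~(\ref{a}) with data $(\xi,f,L,U)$ and lies in the required spaces. For uniqueness, any solution $(\hat Y,\hat Z,\hat K^{+},\hat K^{-})$ of~(\ref{a}) in those spaces is in particular the solution of the DRBSDE with data $(\xi,f(\cdot,\hat Y_{\cdot},\hat Z_{\cdot}),L,U)$, hence $(\hat Y,\hat Z)=\Phi(\hat Y,\hat Z)$ equals the unique fixed point, and then $(\hat K^{+},\hat K^{-})=(K^{+},K^{-})$. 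I expect the main obstacle to be purely bookkeeping: the sign analysis of the reflection increments $\bar Y\,d\bar K^{\pm}$ and the precise use of $(H3)$ in Young's inequality so that $\mu$ and $\gamma$ are absorbed exactly into $a^{2}|\bar U|^{2}$ and $|\bar V|^{2}$, while keeping $\beta$ simultaneously large enough for Theorem~\ref{t}/Lemma~1 and producing a contraction ratio strictly below $1$.
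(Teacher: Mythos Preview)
Your proposal is correct and follows essentially the same route as the paper: define a map on $\mathfrak{B}^{2}$ via Theorem~\ref{t}, apply It\^o's formula to $e^{\beta A(t)}|\bar Y_{t}|^{2}$, discard the reflection increments by the sign argument $\bar Y_{s}\,d(\bar K^{+}_{s}-\bar K^{-}_{s})\le0$, and use Young's inequality with $\beta$ large to get a contraction. The only cosmetic differences are your choice of Young parameters ($\alpha\ge4$, $\beta\ge2\alpha+1$) versus the paper's ($\alpha\beta=4$, $\beta>5$), and that you spell out the sign analysis and the uniqueness half more explicitly than the paper does.
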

\begin{proof}
Given $(\phi,\psi)\in\mathfrak{B}^2$, consider the following DRBSDE :
\begin{align}
\label{b1}  \displaystyle \left\{ %
\begin{array}{@{}l}
Y_{t}=\xi\,{+}\displaystyle\int_{t}^{T} f(s,\phi_{s},\psi
_{s})ds\,{+}\,(K_{T}^{+}-K_{t}^{+})-(K_{T}^{-}-K_{t}^{-})\,{-}\displaystyle\int_{t}^{T}Z_{s}dB_{s} \quad t\,{\leq}\, T
\\
L_t\leq{Y}_{t}\leq U_t,\ \forall t\leq T \ \mbox{and}\ \displaystyle\int_{0}^{T}({Y}_{t}-L_{t})d{K}^+_{t}=\displaystyle\int_{0}^{T}(U_{t}-Y_t)d{K}^-_{t}=0.
\end{array}
\right.
\end{align}
From ${(H2)}$ and ${(H3)}$, we have
\[
\big|f(t,\phi_{t},\psi_{t})\big|^2 \leq3
\bigl(a(t)^4|\phi_t|^2+a(t)^2|
\psi_t|^2+\big|f(t,0,0)\big|^2 \bigr).
\]
It follows from ${(H4)}$ that $\frac{f}{a}\in\mathcal{H}^{2}(\beta,a)$
and then (\ref{b1}) has a unique solution $(Y,Z,\allowbreak{}K^{+},K^{-})$.

We define a mapping
\[
\begin{array}{ccccc}
\varphi&: & \mathfrak{B}^2 &\longrightarrow& \mathfrak{B}^2 \\
& & (\phi,\psi)&\longmapsto& (Y,Z)\\
\end{array} %
\]
Let $\varphi(\phi,\psi)=(Y,Z)$ and $\varphi(\phi',\psi')=(Y',Z')$ where
$(Y,Z,K^{+},K^{-})$ (resp. $(Y',\allowbreak{}Z',K^{+'},K^{-'})$) is the
unique solution to the DRBSDE associated with data $(\xi,\break
{}f(.,\phi,\psi),L,U)$ (resp. $(\xi,f(.,\phi',\psi'),L,U)$). Denote
$\Delta{\varGamma}=\varGamma-\varGamma'$ for $\varGamma=\break Y,Z,K^+,K^-,\phi,\psi$ and
$\Delta f_t=f(t,{\phi'}_{t},{\psi'}_{t})-f(t,\phi_{t},\psi_{t})$.
Applying It\^{o}'s formula to $e^{\beta A(t)}|\Delta Y_t|^2$ and taking
expectation we have
\begin{align*}
&\mathbb{E}e^{\beta A(t)}|\Delta Y_t|^2 +\beta
\mathbb{E}\int_{t}^{T} e^{\beta A(s)}a^2(s)|
\Delta Y_{s}|^2ds +\mathbb{E}\int_{t}^{T}e^{\beta A(s)}|
\Delta Z_{s}|^2ds
\\
&\quad\leq2\mathbb{E}\int_{t}^{T}e^{\beta A(s)}
\Delta Y_{s} \Delta f_s ds
\\
&\quad\leq\alpha\beta\mathbb{E}\int_{t}^{T}
e^{\beta
A(s)}a^2(s)|\Delta Y_{s}|^2ds +
\frac{2}{\alpha\beta}\mathbb{E}\int_{t}^{T}
e^{\beta
A(s)}\bigl(a^2(s)|\Delta\phi_s|^2+|
\Delta\psi_s|^2\bigr)ds.
\end{align*}
We have used the fact that $\Delta Y_{s}d(\Delta K_{s}^{+}-\Delta
K_{s}^{-})\leq0$. Choosing $\alpha\beta=4$ and $\beta>5$, we can write
\[
\bigl\llVert \varphi(\phi,\psi) \bigr\rrVert _\beta^2 \leq
\frac{1}{2} \bigl\llVert (\phi ,\psi) \bigr\rrVert _\beta^2.
\]
It follows that $\varphi$ is a strict contraction mapping on $\mathfrak
{B}^2$ and then $\varphi$ has a unique fixed point which is the
solution to the DRBSDE (\ref{a}).
\end{proof}
\begin{remark}
If we consider $U=+\infty$, we obtain the BSDE with one continuous
reflecting barrier $L$, then we proved the existence and uniqueness of
the solution to RBSDE (\ref{RBSDE}) by means of a penalization method.
Before this work, Wen L\"{u} \cite{WL} showed the existence and
uniqueness result for this class of equations via the Snell envelope notion.\looseness=-1
\end{remark}
%
\subsection{Completely separated barriers}
In this section we will prove the existence of solution to (\ref{a})
when the barriers are completely separated, i.e., $ L_t<U_t$, $\forall
t\leq T$. Then
\begin{itemize}
\item[$(\mathcal{H}7)$] there exists a continuous semimartingale
\[
H_t=H_0+\int_0^th_sdB_s-V^+_t+V^-_t,
\quad H_T=\xi
\]
with $h\in\mathcal{H}^2(0,a)$ and $V^\pm\in\mathcal{S}^2$ ($V^\pm_0=0$)
are two nondecreasing continuous processes, such that
\begin{equation}
\label{h} L_t\leq H_t\leq U_t \qquad0
\leq t\leq T.
\end{equation}
\end{itemize}
We will show the existence by the general penalization method. We first
consider the special case when the generator does not depend on $(y,z)$:
\[
f(t,y,z)=f(t).
\]
Let $(Y^{n},Z^{n})\in(\mathcal{S}^{2}(\beta,a)\cap\mathcal
{S}^{2,a}(\beta,a))\times\mathcal{H}^{2}(\beta,a)$ be solution to
the following BSDE
\begin{align}
\label{b} Y^{n}_{t}&=\xi+\int_{t}^{T}f(s)ds-n
\int_{t}^{T}\bigl(Y^{n}_s-U_s
\bigr)^+ds+n\int_{t}^{T}\bigl(Y^{n}_s-L_s
\bigr)^-ds
\nonumber
\\
&\quad-\int_{t}^{T}Z^{n}_{s}dB_{s}.
\end{align}
We denote $K^{n+}_t:=n\int_{0}^{t}(Y^{n}_s-L_s)^-ds$, $K^{n-}_t:=n\int_{0}^{t}(Y^{n}_s-U_s)^+ds$, $K^{n}_t=K^{n+}_t-K^{n-}_t$ and
$f^{n}(s,y)=f(s)-n(y-U_s)^++n(y-L_s)^-$.

Now let us derive the uniform a priori estimates of
$(Y^{n},Z^{n},K^{n+},K^{n-})$.
\begin{lemma}\label{l2}
There exists a positive constant $\kappa$ independent of $n$ such that,
$\forall n\geq0$,
\begin{align*}
&\mathbb{E} \Biggl[\sup_{0\leq t\leq T}e^{\beta
A(t)}\big|Y_{t}^{n}\big|^{2}+
\int_{0}^{T}e^{\beta
A(t)}a^2(t)\big|Y_{t}^{n}\big|^{2}dt
\\
&\quad+\int_{0}^{T}e^{\beta
A(t)}\big|Z_{t}^{n}\big|^{2}dt+\big|K_{T}^{n+}\big|^2+\big|K_{T}^{n-}\big|^2
\Biggr]\leq\kappa.
\end{align*}
\end{lemma}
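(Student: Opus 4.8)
The plan is to derive the estimate by applying It\^{o}'s formula to $e^{\beta A(t)}|Y^n_t|^2$, exactly in the spirit of the a priori estimate already proved for the DRBSDE, but now using the semimartingale $H$ from $(\mathcal{H}7)$ rather than the upper obstacle directly. The penalty terms $K^{n+}$ and $K^{n-}$ appear in the It\^{o} expansion through $2\int_t^T e^{\beta A(s)}Y^n_s\,dK^{n+}_s - 2\int_t^T e^{\beta A(s)}Y^n_s\,dK^{n-}_s$, and unlike the genuine DRBSDE these are not localized on $\{Y^n=L\}$ or $\{Y^n=U\}$, so I cannot simply replace $Y^n_s$ by $L_s$ or $U_s$. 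This is where $H$ does the work: since $K^{n+}$ pushes up only when $Y^n$ is below $L\le H$ and $K^{n-}$ pushes down only when $Y^n$ is above $U\ge H$, one has $(Y^n_s-H_s)\,dK^{n-}_s\ge 0$ and $(H_s-Y^n_s)\,dK^{n+}_s\ge 0$ pointwise (in fact $n(Y^n_s-L_s)^-\le n(Y^n_s-H_s)^-$ and $n(Y^n_s-U_s)^+\le n(Y^n_s-H_s)^+$ give the right sign after integrating against the penalty densities).

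Concretely, I would compute $d\,e^{\beta A(t)}|Y^n_t-H_t|^2$ using the dynamics \eqref{b} for $Y^n$ and the decomposition in $(\mathcal{H}7)$ for $H$. The drift contributions are: the stochastic-Lipschitz control of $f$ via Young's inequality (absorbing $\tfrac{2}{\beta}|f(s)|^2/a^2(s)$ and a $\tfrac{c}{\beta}a^2(s)|Y^n_s-H_s|^2$ term into the good $\beta e^{\beta A(s)}a^2(s)|Y^n_s-H_s|^2$ term on the left, which requires $\beta$ large), the cross terms with $h_s$ and $V^\pm_s$ from $H$, and crucially the penalty terms $-2\int(Y^n_s-H_s)\,dK^{n-}_s + 2\int(Y^n_s-H_s)\,dK^{n+}_s$, both of which are $\le 0$ by the sign argument above and so are simply dropped. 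Taking expectations and using the Burkholder--Davis--Gundy inequality for the $\int e^{\beta A(s)}(Y^n_s-H_s)(Z^n_s-h_s)\,dB_s$ martingale part then yields a bound on $\mathbb{E}[\sup_t e^{\beta A(t)}|Y^n_t-H_t|^2 + \int_0^T e^{\beta A(t)}(a^2(t)|Y^n_t-H_t|^2+|Z^n_t-h_t|^2)\,dt]$ in terms of $\mathbb{E}[e^{\beta A(T)}|\xi-H_T|^2] = 0$ (since $H_T=\xi$) plus $\|f/a\|_\beta^2$, $\|h\|_\beta^2$, $\|V^\pm\|^2_{\mathcal{S}^2}$ — all finite by hypothesis. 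Then $|Y^n_t|^2\le 2|Y^n_t-H_t|^2+2|H_t|^2$ and $|Z^n_t|^2\le 2|Z^n_t-h_t|^2+2|h_t|^2$ transfer this to the estimate for $Y^n$ and $Z^n$ themselves.

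It remains to bound $\mathbb{E}|K^{n+}_T|^2$ and $\mathbb{E}|K^{n-}_T|^2$. For this I would first get separate control of each: writing $K^{n+}_T = Y^n_0-\xi-\int_0^T f(s)\,ds+\int_0^T Z^n_s\,dB_s + K^{n-}_T$ gives only the difference, so instead I would use the standard trick of testing against $H$ once more, or alternatively note that $n\int_0^T(Y^n_s-L_s)^-\,ds$ and $n\int_0^T(Y^n_s-U_s)^+\,ds$ can each be estimated by combining the $Y^n$--$H$ It\^{o} identity (which already contains $\int_0^T dK^{n+}_s$ and $\int_0^T dK^{n-}_s$ against a bounded-below integrand) with the now-established bounds on $\sup_t e^{\beta A(t)}|Y^n_t|^2$ and $\int_0^T e^{\beta A(t)}|Z^n_t|^2\,dt$; this reproduces the argument used for \eqref{e4} in the a priori lemma, with the $\tfrac12\mathbb{E}|K^{n\pm}_T|^2$ terms absorbed on the left. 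Collecting everything gives a single constant $\kappa$, independent of $n$, and completes the proof.

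I expect the main obstacle to be the handling of the two penalty terms: verifying carefully that $\int_0^T e^{\beta A(s)}(Y^n_s-H_s)\,dK^{n-}_s\ge 0$ and $\int_0^T e^{\beta A(s)}(H_s-Y^n_s)\,dK^{n+}_s\ge 0$, i.e. that the semimartingale $H$ really separates the push-up and push-down regions, and then making sure the remaining cross terms involving $V^+$ and $V^-$ (which are genuine bounded-variation parts of $H$, not under our control sign-wise) are dealt with — these get estimated by $\sup_t e^{2\beta A(t)}|Y^n_t-H_t|\cdot(V^+_T+V^-_T)$ and split via Young into a small multiple of $\sup_t e^{\beta A(t)}|Y^n_t-H_t|^2$ (absorbed) and a constant times $\mathbb{E}[(V^+_T+V^-_T)^2]$ (finite since $V^\pm\in\mathcal{S}^2$). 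The rest is routine once $\beta$ is chosen large enough to make the coefficients $\beta/2 - c/\beta$ and $1-c/\beta$ strictly positive.
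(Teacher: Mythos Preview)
Your approach via It\^{o}'s formula applied to $e^{\beta A(t)}|Y^n_t-H_t|^2$ is a natural and elegant idea, and the sign observation for the penalty terms is correct: on $\{dK^{n+}>0\}$ one has $Y^n_s\le L_s\le H_s$, and on $\{dK^{n-}>0\}$ one has $Y^n_s\ge U_s\ge H_s$, so both penalty contributions are nonpositive and may be dropped. However, the method runs into an integrability problem under the paper's hypothesis~$(\mathcal{H}7)$. The cross terms with the bounded-variation parts of $H$ read
\[
2\int_0^T e^{\beta A(s)}(Y^n_s-H_s)\,(dV^+_s-dV^-_s),
\]
and your proposed Young splitting into ``a small multiple of $\sup_t e^{\beta A(t)}|Y^n_t-H_t|^2$ plus a constant times $\mathbb{E}[(V^+_T+V^-_T)^2]$'' does not go through: the weight $e^{\beta A(s)}$ cannot be absorbed entirely into the first factor, and what remains is of order $\mathbb{E}\bigl[e^{\beta A(T)}(V^+_T+V^-_T)^2\bigr]$, which is \emph{not} assumed finite (only $V^\pm\in\mathcal{S}^2$, unweighted). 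The same issue affects the $h$-term, since $(\mathcal{H}7)$ places $h$ in $\mathcal{H}^2(0,a)$, not $\mathcal{H}^2(\beta,a)$. So under the stated hypotheses your direct estimate does not close.

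The paper circumvents this by a genuinely different decomposition. The bound on $\mathbb{E}\sup_t e^{\beta A(t)}|Y^n_t|^2$ is obtained \emph{without} $H$, by sandwiching $Y^n$ between the solutions $\underline{Y}$ and $\overline{Y}$ of the two single-barrier RBSDEs (via comparison). The weighted It\^{o} formula is then applied to $e^{\beta A(t)}|Y^n_t|^2$ itself, which avoids any $V^\pm$ or $h$ contributions; the price is that the penalty terms now land on the right-hand side with a small coefficient $\alpha$. The semimartingale $H$ enters only in the separate estimation of $\mathbb{E}|K^{n\pm}_T|^2$, through a stopping-time partition $(\tau_l)_l$ that isolates the excursions of $Y^n$ above $U$ and below $L$; on each excursion the dynamics of $Y^n-H$ give an \emph{unweighted} bound involving $|V^\pm_T|$ and $\int|h_s|^2\,ds$, matching exactly the unweighted integrability in $(\mathcal{H}7)$. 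Your ``bounded-below integrand'' route to the separate $K^{n\pm}$ bounds also fails as stated: on the support of $dK^{n+}$ one only has $H_s-Y^n_s\ge H_s-L_s\ge 0$, with no strict lower bound, so the It\^{o} identity for $|Y^n-H|^2$ does not by itself control $K^{n+}_T$.
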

\begin{proof}
Consider the RBSDE with data $(\xi,f,L)$. That is,
\begin{eqnarray}
\label{c} && \displaystyle \left\{ %
\begin{array}{@{}ll}
\overline{Y}_{t}=\xi+\displaystyle\int_{t}^{T}f(s)ds+\overline
{K}_{T}-\overline{K}_{t}-\displaystyle\int_{t}^{T}\overline
{Z}_{s}dB_{s}& \hbox{}\\
\overline{Y}_{t}\geq L_t,\ \forall t\leq T \ \mbox{and}\
\displaystyle\int_{0}^{T}(\overline{Y}_{t}-L_{t})d\overline{K}_{t}=0. &
\hbox{}
\end{array}
\right.
\end{eqnarray}
From Appendix~\ref{apdx} there exists a unique triplet of processes
$(\overline{Y},\overline{Z},\overline{K})\in(\mathcal{S}^{2}(\beta
,a)\cap\mathcal{S}^{2,a}(\beta,a))\times\mathcal{H}^{2}(\beta
,a)\times\mathcal{S}^{2}$ being the solution to RBSDE (\ref{c}). We
consider the penalization equation associated with the RBSDE (\ref{c}),
for $n\in\mathbb{N}$,
\begin{equation*}
\overline{Y}^n_{t}=\xi+\int_{t}^{T}
f(s)ds+n\int_{t}^{T}\bigl(\overline
{Y}^n_s-L_s\bigr)^-ds-\int
_{t}^{T}\overline{Z}^n_{s}dB_{s}.
\end{equation*}
The Remark \ref{Rmqcomp} implies that $\overline{Y}^0_{t}\leq\overline
{Y}^n_{t}\leq\overline{Y}^{n+1}$ and $Y^n_{t}\leq\overline{Y}^n_{t}$
for all $t\leq T$. Therefore, as $n\longrightarrow+\infty$ for all
$t\leq T$, $\overline{Y}^n_t\nearrow\overline{Y}_t$. Hence
$Y^{n}_{t}\leq\overline{Y}_{t}$.

Similarly, we consider the RBSDE with data $(\xi,f,U)$. There exists a
unique triplet of processes $(\underline{Y},\underline{Z},\underline
{K})\in(\mathcal{S}^{2}(\beta,a)\cap\mathcal{S}^{2,a}(\beta
,a))\times\mathcal{H}^{2}(\beta,a)\times\mathcal{S}^{2}$, which satisfies
\begin{eqnarray}
\label{c1} && \displaystyle \left\{ %
\begin{array}{@{}ll}
\underline{Y}_{t}=\xi+\displaystyle\int_{t}^{T}f(s)ds-(\underline
{K}_{T}-\underline{K}_{t})-\displaystyle\int_{t}^{T}\underline
{Z}_{s}dB_{s}& \hbox{}\\
\underline{Y}_{t}\leq U_t,\ \forall t\leq T \ \mbox{and}\
\displaystyle\int_{0}^{T}(U_t-\underline{Y}_{t})d\underline{K}_{t}=0. &
\hbox{}
\end{array}
\right.
\end{eqnarray}
By the penalization equation associated with the RBSDE (\ref{c1})
\begin{equation*}
\underline{Y}^n_{t}=\xi+\int_{t}^{T}
f(s)ds-n\int_{t}^{T}\bigl(\underline
{Y}^n_s-U_s\bigr)^+ds-\int
_{t}^{T}\underline{Z}^n_{s}dB_{s}
\end{equation*}
and the Remark \ref{Rmqcomp}, we deduce that $Y^{n}_{t}\geq\underline
{Y}_{t}$ for all $t\leq T$. Then we can write
\begin{equation}
\label{m} \mathbb{E}\sup_{0\leq t\leq T}e^{\beta A(t)}\big|Y^{n}_t\big|^2
\leq\max \Bigl\{\mathbb{E}\sup_{0\leq t\leq T}e^{\beta A(t)}|\overline
{Y}_t|^2,\mathbb{E}\sup_{0\leq t\leq T}e^{\beta A(t)}|
\underline {Y}_t|^2 \Bigr\} \leq\kappa.
\end{equation}
On the other hand, using It\^{o}'s formula and taking expectation
implies for $t\leq T$:
\begin{align*}
&\beta\mathbb{E}\int_{t}^{T}e^{\beta
A(s)}a^2(s)\big|Y_{s}^{n}\big|^{2}ds+
\mathbb{E}\int_{t}^{T}e^{\beta
A(s)}\big|Z_{s}^{n}\big|^{2}ds
\\
&\quad\leq\mathbb{E}e^{\beta A(T)}|\xi|^2+2\mathbb{E}\int
_{t}^{T}e^{\beta
A(s)}Y_{s}^{n}f(s)ds
\\
&\qquad-2n\mathbb{E}\int_{t}^{T}e^{\beta
A(s)}Y_{s}^{n}
\bigl(Y_s^{n}-U_s\bigr)^+ds+2n\mathbb{E}\int
_{t}^{T}e^{\beta
A(s)}Y_{s}^{n}
\bigl(Y_s^{n}-L_s\bigr)^-ds
\\
&\quad\leq\mathbb{E}e^{\beta A(T)}|\xi|^2+\frac{\beta}{2}\mathbb{E}
\int_{t}^{T}e^{\beta A(s)}a^2(s)\big|Y_{s}^{n}\big|^2ds+
\frac{2}{\beta}\mathbb {E}\int_{t}^{T}e^{\beta A(s)}
\frac{|f(s)|^2}{a^2(s)}ds
\\
&\qquad+2n\mathbb{E}\int_{t}^{T}e^{\beta
A(s)}U_{s}^{-}
\bigl(Y_s^{n}-U_s\bigr)^+ds+2n\mathbb{E}\int
_{t}^{T}e^{\beta
A(s)}L_{s}^{+}
\bigl(Y_s^{n}-L_s\bigr)^-ds.
\end{align*}
Hence
\begin{align}
\label{ekk} &\frac{\beta}{2}\mathbb{E}\int_{t}^{T}e^{\beta
A(s)}a^2(s)\big|Y_{s}^{n}\big|^{2}ds+
\mathbb{E}\int_{t}^{T}e^{\beta
A(s)}\big|Z_{s}^{n}\big|^{2}ds
\nonumber
\\
&\quad\leq\mathbb{E}e^{\beta A(T)}|\xi|^2+
\frac{2}{\beta
}\mathbb{E}\int_{t}^{T}e^{\beta A(s)}
\biggl\llvert \frac{f(s)}{a(s)} \biggr\rrvert ^2ds+
\frac{1}{\alpha}\mathbb{E}\sup_{0\leq t\leq T}e^{2\beta
A(t)}
\bigl(\big|L_{t}^{+}\big|^2+\big|U_{t}^{-}\big|^2
\bigr)
\nonumber
\\
&\qquad+\alpha\mathbb{E} \Biggl[\int_{t}^{T}n
\bigl(Y_s^{n}-U_s\bigr)^+ds
\Biggr]^2+\alpha\mathbb{E} \Biggl[\int_{t}^{T}n
\bigl(Y_s^{n}-L_s\bigr)^-ds
\Biggr]^2.
\end{align}
Now we need to estimate $\mathbb{E} [\int_{t}^{T}n(Y_s^{n}-U_s)^+ds ]^2+ \mathbb{E} [\int_{t}^{T}n(Y_s^{n}-L_s)^-ds ]^2$. For this, let us consider the
following stopping times
\[
\displaystyle \left\{ %
\begin{array}{@{}ll}
\tau_{0}=0 , & \hbox{} \\
\tau_{2l+1}=\inf\big\{t>\tau_{2l}\;\; |\;\; Y_t^{n}\leq L_t\big\}\wedge T, &
l\geq0 \\
\tau_{2l+2}=\inf\big\{t>\tau_{2l+1}\;\; |\;\; Y_t^{n}\geq U_t\big\}\wedge T, &
l\geq0.
\end{array} \right.
\]
Since $Y$, $L$ and $U$ are continuous processes and $L<U$, $\tau_l<\tau
_{l+1}$ on the set $\{\tau_{l+1}<T\}$.
In addition the sequence $(\tau_l)_{l\geq 0}$ is of stationary type (i.e.
$\forall\omega\in\varOmega$, there exists $l_0(\omega)$ such that $\tau
_{l_0}(\omega)=T$). Indeed, let us set $G=\{\omega\in\varOmega, \tau
_{l}(\omega)<T, \ l\geq0\}$, and we will show that $\mathbb{P}(G)=0$.
We assume that $\mathbb{P}(G)>0$, therefore for $\omega\in G$, we have
$Y_{\tau_{2l+1}}\leq L_{\tau_{2l+1}}$ and $Y_{\tau_{2l}}\geq U_{\tau_{2l}}$.
Since $(\tau_l)_{l\geq 0}$ is nondecreasing sequence then $\tau_l\nearrow\tau$,
hence $U_\tau\leq Y_\tau\leq L_\tau$ which is contradiction since
$L<U$. We deduce that $\mathbb{P}(G)=0$. Obviously $Y^{n}\geq L$ on the
interval $[\tau_{2l},\tau_{2l+1}]$, then the BSDE (\ref{b}) becomes
\begin{equation}
\label{eee1} Y^{n}_{\tau_{2l}}=Y^{n}_{\tau_{2l+1}} +
\int_{\tau_{2l}}^{\tau
_{2l+1}}f(s)ds-n\int_{\tau_{2l}}^{\tau_{2l+1}}
\bigl(Y^{n}_s-U_s\bigr)^+ds-\int
_{\tau_{2l}}^{\tau_{2l+1}}Z^{n}_{s}dB_{s}.
\end{equation}
On the other hand, using the assumption $(\mathcal{H}7)$, we get
\begin{align*}
Y_{\tau_{2l}}^{n}&\geq H_{\tau_{2l}} \mbox{ on } \{\tau_{2l}< T\}\quad\mbox
{and}\quad Y_{\tau_{2l}}^{n}=H_{\tau_{2l}}=\xi\mbox{ on } \{\tau_{2l}=T\}
,\\
Y_{\tau_{2l+1}}^{n}&\leq H_{\tau_{2l+1}} \mbox{ on } \{\tau_{2l+1}< T\}
\quad\mbox{and}\quad Y_{\tau_{2l+1}}^{n}=H_{\tau_{2l+1}}=\xi\mbox{ on }\{\tau
_{2l+1}=T\}.
\end{align*}
From (\ref{eee1}) and the definition of process $H$ we obtain
\begin{align*}
n\int_{\tau_{2l}}^{\tau_{2l+1}}\bigl(Y_s^{n}-U_s
\bigr)^+ds
&\leq H_{\tau_{2l+1}}-H_{\tau_{2l}}+\int_{\tau_{2l}}^{\tau_{2l+1}}f(s)ds
-\int_{\tau_{2l}}^{\tau_{2l+1}}Z_s^{n}dB_s
\\
&\leq\int_{\tau_{2l}}^{\tau_{2l+1}}\bigl(h_s-Z_s^{n}
\bigr)dB_s+\int_{\tau
_{2l}}^{\tau_{2l+1}}\big|f(s)\big|ds+V^-_{\tau_{2l+1}}-V^-_{\tau_{2l}}.
\end{align*}
By summing in $l$, using the fact that $Y^{n}\leq U$ on the interval
$[\tau_{2l+1},\tau_{2l+2}]$, we can write for $t\leq T$
\begin{align}
\label{c2} \mathbb{E} \Biggl[n\int_{t}^{T}
\bigl(Y_s^{n}-U_s\bigr)^+ds
\Biggr]^2&\leq 4 \Biggl(\mathbb{E}\int_{t}^{T}|h_s|^2ds+
\mathbb{E}\int_{t}^{T}e^{\beta
A_s}\big|Z_s^{n}\big|^2ds
\nonumber
\\
&\quad+\frac{T}{\beta}\mathbb{E}\int_{t}^{T}e^{\beta
A_s}
\frac{|f(s)|^2}{a^2(s)}ds+\mathbb{E}\big|V^-_T\big|^2 \Biggr).
\end{align}
In the same way, we obtain
\begin{align}
\label{c3} \mathbb{E} \Biggl[n\int_{t}^{T}
\bigl(Y_s^{n}-L_s\bigr)^-ds
\Biggr]^2&\leq 4 \Biggl(\mathbb{E}\int_{t}^{T}|h_s|^2ds+
\mathbb{E}\int_{t}^{T}e^{\beta
A_s}\big|Z_s^{n}\big|^2ds
\nonumber
\\
&\quad+\frac{T}{\beta}\mathbb{E}\int_{t}^{T}e^{\beta
A_s}
\frac{|f(s)|^2}{a^2(s)}ds+\mathbb{E}\big|V^+_T\big|^2 \Biggr).
\end{align}
Combining (\ref{c2}), (\ref{c3}) with (\ref{ekk}), we obtain the
desired result.
\end{proof}
\begin{lemma}\label{l3}\quad
\begin{enumerate}
\item\quad$\mathbb{E}\sup\limits_{0\leq t\leq T}e^{\beta
A(t)}|(Y_{t}^{n}-U_{t})^+|^{2}\xrightarrow[n\to+\infty]{}0$.
\item\quad$\mathbb{E}\sup\limits_{0\leq t\leq T}e^{\beta
A(t)}|(Y_{t}^{n}-L_{t})^-|^{2}\xrightarrow[n\to+\infty]{}0$.
\end{enumerate}
\end{lemma}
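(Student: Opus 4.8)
\emph{Plan of attack.} The aim is to turn the $L^{1}(dt)$-type control on the overshoots that is already implicit in Lemma~\ref{l2} into an $\mathcal L^{2}$-estimate on their uniform norm, in two moves. First I would prove that the \emph{time integrals} of $((Y^n-U)^+)^2$ and $((Y^n-L)^-)^2$ vanish: by Lemma~\ref{l2} the sequences $(K^{n+}_T)_n$ and $(K^{n-}_T)_n$ are bounded in $\mathcal L^{2}$, and since $K^{n-}_T=n\int_0^T(Y^n_s-U_s)^+\,ds$ and $K^{n+}_T=n\int_0^T(Y^n_s-L_s)^-\,ds$ this gives $\mathbb E[(\int_0^T(Y^n_s-U_s)^+ds)^2]\le \kappa/n^{2}$ and the analogous bound for $(Y^n_s-L_s)^-$. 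On the other hand, from $(Y^n_s-U_s)^+\le|Y^n_s|+U^-_s$, $(Y^n_s-L_s)^-\le|Y^n_s|+L^+_s$, Lemma~\ref{l2} and $(H5)$, the quantities $\sup_{s\le T}e^{\beta A(s)}(Y^n_s-U_s)^+$ and $\sup_{s\le T}e^{\beta A(s)}(Y^n_s-L_s)^-$ are bounded in $\mathcal L^{2}$ uniformly in $n$; a Cauchy--Schwarz inequality then yields $\mathbb E\int_0^T e^{\beta A(s)}|(Y^n_s-U_s)^+|^2\,ds\to0$ and $\mathbb E\int_0^T e^{\beta A(s)}|(Y^n_s-L_s)^-|^2\,ds\to0$ as $n\to+\infty$.

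\emph{Second move: from the time integral to the supremum.} Let $H$ be the continuous semimartingale of $(\mathcal H7)$, so that $L\le H\le U$ and, crucially, $H_T=\xi=Y^n_T$. I would apply It\^o's formula to $e^{\beta A(t)}|(Y^n_t-H_t)^+|^2$ (and symmetrically to $e^{\beta A(t)}|(Y^n_t-H_t)^-|^2$ for the second claim). The terminal term is $0$, the martingale term is absorbed by the Burkholder--Davis--Gundy inequality, and three structural facts drive the estimate: on $\{Y^n_s>H_s\}$ one has $Y^n_s>H_s\ge L_s$, so $(Y^n_s-L_s)^-=0$; since $H\le U$ one has $(Y^n_s-H_s)^+\ge(Y^n_s-U_s)^+$, whence the penalization term $-2n\int e^{\beta A}(Y^n_s-H_s)^+(Y^n_s-U_s)^+ds\le -2n\int e^{\beta A}|(Y^n_s-U_s)^+|^2ds\le0$ and may be kept on the left-hand side; and $dV^+_s$ enters with the favourable sign. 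For the leftover $f$- and $dV^-$-terms I would use the splitting $(Y^n_s-H_s)^+\le(Y^n_s-U_s)^+ + (U_s-H_s)$ and localize along the excursion times built in the proof of Lemma~\ref{l2}, i.e. $\tau_0=0$, $\tau_{2l+1}=\inf\{t>\tau_{2l}: Y^n_t\le L_t\}\wedge T$, $\tau_{2l+2}=\inf\{t>\tau_{2l+1}: Y^n_t\ge U_t\}\wedge T$, which are stationary and on which $(Y^n-U)^+$ is supported in $\bigcup_l[\tau_{2l},\tau_{2l+1}]$ (where the $L$-penalization is inactive and the comparison with $H$ has vanishing positive boundary part at $\tau_{2l+1}$, since $Y^n_{\tau_{2l+1}}\le L_{\tau_{2l+1}}\le H_{\tau_{2l+1}}$). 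Combined with Young's inequality, BDG, Lemma~\ref{l2} and the first move, this is meant to give $\mathbb E\sup_{t\le T}e^{\beta A(t)}|(Y^n_t-U_t)^+|^2\to0$; the estimate for $(Y^n-L)^-$ is the mirror image, using $H\ge L$ and the intervals $[\tau_{2l+1},\tau_{2l+2}]$.

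\emph{Where the difficulty sits.} The obstacle is entirely in the second move. Because $U$ and $L$ are only continuous and not semimartingales, one cannot run It\^o directly on $(Y^n-U)^+$ and is forced to use the proxy $H$; but then the residual terms $\int e^{\beta A}(Y^n-H)^+dV^-$ and $\int e^{\beta A}(Y^n-H)^+f\,ds$ do \emph{not} vanish by themselves (note that $(Y^n-H)^+$ does not tend to $0$). The estimate closes only after carefully exploiting the excursion structure above and the terminal identity $H_T=\xi$ to show that the parts of those terms which are not already multiplied by the convergent factor of the first move are either cancelled or dominated by the negative penalization term $-2n\int e^{\beta A}|(Y^n-U)^+|^2ds$ kept on the left-hand side. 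The sandwiching one-barrier RBSDEs used in Lemma~\ref{l2} (via Remark~\ref{Rmqcomp}) are tempting here but too lossy for the overshoot, so the semimartingale $H$ really is needed. Everything else --- the first move, the sign bookkeeping in It\^o, the BDG step --- is routine.
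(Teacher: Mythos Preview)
Your plan diverges from the paper's proof, and the second move does not close. The difficulty you flag is real and, as written, remains unresolved: once you apply It\^o--Tanaka to $e^{\beta A(t)}|(Y^n_t-H_t)^+|^2$, the residual terms coming from the splitting $(Y^n-H)^+\le(Y^n-U)^++(U-H)$ leave on the right-hand side the pieces $\int_0^T e^{\beta A(s)}(U_s-H_s)\,dV^-_s$ and $\int_0^T e^{\beta A(s)}(U_s-H_s)|f(s)|\,ds$, which are fixed random variables, independent of $n$. Neither the excursion stopping times nor the negative penalization term on the left absorbs them: the excursion decomposition only localizes where $(Y^n-U)^+$ lives, it does not make $(U-H)$ small; and the penalization term $2n\int e^{\beta A}|(Y^n-U)^+|^2ds$ is itself only bounded (by your first move it is $O(1)$), so it cannot dominate a term that is $O(1)$ and nonvanishing. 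At best your argument yields a uniform bound on $\mathbb E\sup_t e^{\beta A(t)}|(Y^n_t-H_t)^+|^2$, not convergence to $0$ of $\mathbb E\sup_t e^{\beta A(t)}|(Y^n_t-U_t)^+|^2$.

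The paper avoids this obstacle altogether by a comparison with an \emph{explicitly solvable} linear BSDE. For the lower overshoot one introduces
\[
\widehat Y^n_t=\xi+\int_t^T f(s)\,ds+n\int_t^T\bigl(L_s-\widehat Y^n_s\bigr)\,ds-\int_t^T\widehat Z^n_s\,dB_s,
\]
notes that its generator is dominated by that of $Y^n$, so $Y^n\ge\widehat Y^n$ and hence $(Y^n-L)^-\le(\widehat Y^n-L)^-$, and then uses the closed form
\[
\widehat Y^n_\nu=\mathbb E\Bigl[e^{-n(T-\nu)}\xi+\int_\nu^T e^{-n(s-\nu)}\bigl(f(s)+nL_s\bigr)\,ds\ \Big|\ \mathcal F_\nu\Bigr].
\]
The analysis of $(\widehat Y^n-L)^-$ then reduces to a deterministic uniform-continuity argument on $L$ (for $n\int_t^T e^{-n(s-t)}L_s\,ds-L_t$) together with Jensen's inequality and Doob's maximal $L^2$ inequality applied to the conditional expectation; this yields $\mathbb E\sup_t e^{\beta A(t)}|(\widehat Y^n_t-L_t)^-|^2\to0$ directly. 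The upper overshoot is symmetric. No It\^o formula on the overshoot, no semimartingale $H$, and no excursion bookkeeping are needed. You dismissed the ``sandwiching'' idea as too lossy, but the right sandwich is not a one-barrier RBSDE: it is this linear BSDE, whose explicit solution is precisely what makes the supremum tractable.
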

\begin{proof}
Consider the following BSDE for each $n\in\mathbb{N}$
\begin{align*}
\widehat{Y}^{n}_{t}&=\xi+\int_{t}^{T}f(s)ds+n
\int_{t}^{T}\bigl(L_s-
\widehat{Y}^{n}_s\bigr)ds -\int_{t}^{T}
\widehat{Z}^{n}_{s}dB_{s}
\\
&=\xi+\int_{t}^{T}f(s)ds+n\int
_{t}^{T}\bigl(\widehat{Y}^{n}_s-L_s
\bigr)^-ds -n\int_{t}^{T}\bigl(L_s-
\widehat{Y}^{n}_s\bigr)^-ds\,{-}\int_{t}^{T}
\widehat{Z}^{n}_{s}dB_{s}.
\end{align*}
By the Remark \ref{Rmqcomp}, we have $Y^{n}_{t}\geq\widehat{Y}^{n}_{t}$
for all $t\leq T$. Let $\nu$ be a stopping time such that $\nu\leq T$. Then
\begin{equation}
\label{ek4} \widehat{Y}^n_{\nu}=\mathbb{E}
\Biggl[e^{-n(T-\nu)}\xi+\int_{\nu
}^{T}e^{-n(s-\nu)}f(s)ds
+n\int_{\nu}^{T}e^{-n(s-\nu)}L_sds|
\mathcal{F}_{\nu} \Biggr].
\end{equation}
It is easily seen that
\[
e^{-n(T-\nu)}\xi+n\int_{\nu}^{T}e^{-n(s-\nu)}L_{s}ds
\xrightarrow[n\to +\infty]{} \xi\mathbh{1}_{\nu=T}+L_{\nu}
\mathbh{1}_{\nu<T} \qquad \mathbb{P}\mbox{-a.s.\ in\ }
\mathcal{L}^2.
\]
Moreover, the conditional expectation converges also in $\mathcal
{L}^2$. In addition, by the H\"{o}lder inequality, we have
\begin{align*}
& \Biggl\llvert \int_{\nu}^{T}e^{-n(s-\nu)}
f(s)ds \Biggr\rrvert ^2
\\
&\quad\leq \Biggl(\int_{\nu}^{T}e^{\beta A(s)} \biggl
\llvert \frac{f(s)}{a(s)} \biggr\rrvert ^2ds \Biggr) \Biggl(\int
_{\nu}^{T}e^{-2n(s-\nu)-\beta A(s)}a^2(s)ds
\Biggr)\xrightarrow[n\to+\infty]{}0.
\end{align*}
Thus $\int_{\nu}^{T}e^{-n(s-\nu)} f(s)ds\xrightarrow[n\to+\infty]{}0$ %
$\mathbb{P}\mbox{-a.s. \ in \ }\mathcal{L}^2$.

Now, we denote
\begin{align*}
&\widehat{y}^n_t:=e^{-n(T-t)}\xi+\int
_{t}^{T}e^{-n(s-t)}\bigl(f(s)+nL_s
\bigr)ds,\\
&\tilde{y}^{n}_t:=e^{-n(T-t)}L_T+\int
_{t}^{T}e^{-n(s-t)}\bigl(f(s)+nL_s
\bigr)ds
\end{align*}
and
\[
X^n_t:=e^{-n(T-t)}L_T+n\int
_{t}^{T}e^{-n(s-t)}L_sds-L_t.
\]
By the fact that $L$ is uniformly continuous on $[0,T]$, it can be
shown that the sequence
$(X^n_t)_{n\geq1}$ uniformly converges in $t$, and the same for
$(X^{n-}_t)_{n\geq1}$.
Lebesgue's dominated convergence theorem implies that
\begin{align*}
&\lim\limits
_{n\rightarrow+\infty}\mathbb{E}\sup_{0\leq t\leq
T}e^{\beta A(t)}\big|
\bigl(\widehat{y}^{n}_t-L_t
\bigr)^-\big|^2 =\lim_{n\rightarrow+\infty}\mathbb{E}\sup
_{0\leq t\leq T}e^{\beta
A(t)}\big|\bigl(\tilde{y}^{n}_t-L_t
\bigr)^-\big|^2
\\
&\quad\leq2\lim_{n\rightarrow+\infty}\mathbb{E} \Biggl[\sup
_{0\leq t\leq T}e^{\beta A(t)}\big|X^{n-}_t\big|^2
+\sup_{0\leq t\leq T}e^{\beta A(t)} \Biggl\llvert \int
_{t}^{T}e^{-n(s-t)}f(s)ds \Biggr\rrvert
^2 \Biggr]=0.
\end{align*}
So, from (\ref{ek4}), Jensen's inequality and Doob's maximal quadratic
inequality (see Theorem 20, p. 11 in \cite{Pro}), we have
\begin{align*}
\mathbb{E}\sup_{0\leq t\leq T}e^{\beta A(t)}\big|\bigl(
\widehat{Y}^{n}_t-L_t\bigr)^-\big|^2 &
\leq\mathbb{E}\sup_{0\leq t\leq T}e^{\beta A(t)} \bigl\llvert
\mathbb{E} \bigl[\bigl(\widehat{y}^{n}_t-L_t
\bigr)^-|\mathcal{F}_t \bigr] \bigr\rrvert ^2
\\
&\leq4\mathbb{E}\sup_{0\leq t\leq T}e^{\beta A(t)} \bigl\llvert
\bigl(\widehat {y}^{n}_t-L_t\bigr)^- \bigr
\rrvert ^2\xrightarrow[n\to+\infty]{}0.
\end{align*}
From the fact that $Y^{n}_{t}\geq\widehat{Y}^{n}_{t}$ for all $t\leq T$
we deduce that
\[
\lim\limits
_{n\rightarrow+\infty}\mathbb{E}\sup_{0\leq t\leq T}e^{\beta
A(t)}\big|
\bigl(Y^{n}_t-L_t\bigr)^-\big|^2=0.
\]
Similarly to proof of the Lemma \ref{l}, we can obtain
\[
\lim\limits
_{n\rightarrow+\infty}\mathbb{E}\sup_{0\leq t\leq T}e^{\beta
A(t)}\big|
\bigl(Y^{n}_t-U_t\bigr)^+\big|^2=0.\qedhere
\]
\end{proof}
\begin{lemma}\label{l4}
For each $n\geq p\geq0$, we have
\begin{align*}
&\mathbb{E} \Biggl[\sup_{0\leq t\leq T}e^{\beta
A(t)}\big|Y_{t}^{n}-Y_{t}^{p}\big|^{2}+
\int_{0}^{T}e^{\beta
A(t)}a^2(t)\big|Y_{t}^{n}-Y_{t}^p\big|^{2}dt
\\
&\quad+\int_{0}^{T}e^{\beta
A(t)}\big|Z_{t}^{n}-Z_{t}^p\big|^{2}dt+
\sup_{0\leq t\leq
T}\big|K_{t}^{n}-K_{t}^{p}\big|^{2}
\Biggr]\xrightarrow[n,p\to+\infty]{}0.
\end{align*}
\end{lemma}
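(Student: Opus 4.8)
The plan is to prove the Cauchy property by applying It\^o's formula to $e^{\beta A(t)}|Y^n_t-Y^p_t|^2$ for $n\ge p\ge0$. Because $Y^n$ and $Y^p$ both solve the penalized BSDE~(\ref{b}) with the \emph{same} deterministic generator $f$, the drift cancels in the difference and
\[
d\bigl(Y^n_s-Y^p_s\bigr)=-\,d\bigl(K^n_s-K^p_s\bigr)+\bigl(Z^n_s-Z^p_s\bigr)dB_s,\qquad K^n=K^{n+}-K^{n-},\ \ K^p=K^{p+}-K^{p-}.
\]
Using $dA(t)=a^2(t)\,dt$, $Y^n_T=Y^p_T$, and the fact that the resulting stochastic integral is a true martingale (thanks to the uniform bounds of Lemma~\ref{l2}), taking expectations yields, for every $t\le T$,
\begin{align*}
&\mathbb{E}\Bigl[e^{\beta A(t)}\big|Y^n_t-Y^p_t\big|^2+\beta\int_t^Te^{\beta A(s)}a^2(s)\big|Y^n_s-Y^p_s\big|^2ds+\int_t^Te^{\beta A(s)}\big|Z^n_s-Z^p_s\big|^2ds\Bigr]\\
&\qquad\le 2\,\mathbb{E}\int_t^Te^{\beta A(s)}\bigl(Y^n_s-Y^p_s\bigr)\,d\bigl(K^n_s-K^p_s\bigr).
\end{align*}

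The heart of the proof is bounding the cross term on the right. I would write $d(K^n-K^p)=dK^{n+}-dK^{n-}-dK^{p+}+dK^{p-}$ and exploit the support of each penalization measure. Since $dK^{n+}_s=n(Y^n_s-L_s)^-ds$ is carried by $\{Y^n_s<L_s\}$, on that set $Y^n_s-Y^p_s\le L_s-Y^p_s\le(Y^p_s-L_s)^-$, so $(Y^n_s-Y^p_s)\,dK^{n+}_s\le(Y^p_s-L_s)^-dK^{n+}_s$; since $dK^{n-}$ is carried by $\{Y^n_s>U_s\}$, $-(Y^n_s-Y^p_s)\,dK^{n-}_s\le(Y^p_s-U_s)^+dK^{n-}_s$; and symmetrically $-(Y^n_s-Y^p_s)\,dK^{p+}_s\le(Y^n_s-L_s)^-dK^{p+}_s$ and $(Y^n_s-Y^p_s)\,dK^{p-}_s\le(Y^n_s-U_s)^+dK^{p-}_s$. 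Hence the cross term is bounded by $2\mathbb{E}\int_0^Te^{\beta A(s)}\bigl[(Y^p_s-L_s)^-dK^{n+}_s+(Y^p_s-U_s)^+dK^{n-}_s+(Y^n_s-L_s)^-dK^{p+}_s+(Y^n_s-U_s)^+dK^{p-}_s\bigr]$, and each of the four terms splits, by the Cauchy--Schwarz inequality, into a factor of the type $\bigl(\mathbb{E}\sup_{0\le s\le T}e^{\beta A(s)}|(Y^p_s-L_s)^-|^2\bigr)^{1/2}$ or $\bigl(\mathbb{E}\sup_{0\le s\le T}e^{\beta A(s)}|(Y^p_s-U_s)^+|^2\bigr)^{1/2}$ (and likewise with $n$) --- which tends to $0$ by Lemma~\ref{l3} --- times $\bigl(\mathbb{E}|K^{n\pm}_T|^2\bigr)^{1/2}$ or $\bigl(\mathbb{E}|K^{p\pm}_T|^2\bigr)^{1/2}$, bounded uniformly in the index by Lemma~\ref{l2}. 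Choosing $t=0$ then gives the convergence to $0$ of $\mathbb{E}\int_0^Te^{\beta A(t)}a^2(t)|Y^n_t-Y^p_t|^2dt$ and of $\mathbb{E}\int_0^Te^{\beta A(t)}|Z^n_t-Z^p_t|^2dt$ as $n,p\to+\infty$.

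For the uniform estimate of $Y$, I would return to the It\^o identity before taking expectations, take $\sup_{0\le t\le T}$, then expectations, and absorb the martingale part via the Burkholder--Davis--Gundy inequality: $C\,\mathbb{E}\bigl(\int_0^Te^{2\beta A(s)}|Y^n_s-Y^p_s|^2|Z^n_s-Z^p_s|^2ds\bigr)^{1/2}\le\frac{1}{2}\mathbb{E}\sup_te^{\beta A(t)}|Y^n_t-Y^p_t|^2+C'\mathbb{E}\int_0^Te^{\beta A(s)}|Z^n_s-Z^p_s|^2ds$ after Young's inequality, the last term already tending to $0$; hence $\mathbb{E}\sup_{0\le t\le T}e^{\beta A(t)}|Y^n_t-Y^p_t|^2\to0$. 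For $K^n=K^{n+}-K^{n-}$, which --- unlike the monotone $K^{n+}$ of Lemma~\ref{ll} --- need not be monotone, I would use the representation $K^n_t-K^p_t=-(Y^n_t-Y^p_t)+(Y^n_0-Y^p_0)+\int_0^t(Z^n_s-Z^p_s)dB_s$ together with Doob's inequality and $e^{\beta A}\ge1$ to get $\mathbb{E}\sup_{0\le t\le T}|K^n_t-K^p_t|^2\le C\bigl(\mathbb{E}\sup_te^{\beta A(t)}|Y^n_t-Y^p_t|^2+\mathbb{E}\int_0^Te^{\beta A(s)}|Z^n_s-Z^p_s|^2ds\bigr)\to0$.

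I expect the main difficulty to lie entirely in the cross-term analysis: one must correctly pair each of the four penalization measures with the one-sided estimate of $Y^n-Y^p$ valid on its support, so that every term becomes (a quantity vanishing by Lemma~\ref{l3}) $\times$ (a quantity bounded uniformly in the index by Lemma~\ref{l2}); a wrong sign or the wrong overshoot would break the argument. The remaining ingredients --- the It\^o expansion, the Burkholder--Davis--Gundy/Young absorption, and the representation of $K^n-K^p$ --- are routine.
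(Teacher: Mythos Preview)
Your proposal is correct and follows essentially the same route as the paper's proof: It\^o's formula applied to $e^{\beta A(t)}|Y^n_t-Y^p_t|^2$, the same four-term decomposition of the cross term $(Y^n_s-Y^p_s)\,d(K^n_s-K^p_s)$ using the supports of the penalization measures, the combination of Lemma~\ref{l3} (vanishing overshoots) with Lemma~\ref{l2} (uniform bounds on $K^{n\pm}_T$), then Burkholder--Davis--Gundy for the $\sup$ estimate, and finally the representation of $K^n_t-K^p_t$ through the equation. The paper states the cross-term bound slightly more tersely (pulling out the sup and writing $\mathbb{E}[\sup_t e^{\beta A(t)}(Y^n_t-L_t)^-K^{p+}_T]$, etc.) rather than invoking Cauchy--Schwarz explicitly, but the content is identical.
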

\begin{proof}
It\^{o}'s formula implies that
\begin{align*}
&\mathbb{E}e^{\beta A(t)}\big|Y_{t}^{n}-Y_{t}^p\big|^{2}+
\mathbb{E}\int_{t}^{T}e^{\beta A(s)}\bigl(\beta
a^2(s)\big|Y_{s}^{n}-Y_{s}^p\big|^2+\big|Z_{s}^{n}-Z_{s}^p\big|^2
\bigr)ds
\\
&\quad\leq2\mathbb{E}\int_{t}^{T}e^{\beta
A(s)}
\bigl(Y_{s}^{n}-Y_{s}^p\bigr)
\bigl(dK_s^{n+}-dK_s^{p+}\bigr)
\\
&\qquad-2\mathbb{E}\int_{t}^{T}e^{\beta
A(s)}
\bigl(Y_{s}^{n}-Y_{s}^p\bigr)
\bigl(dK_s^{n-}-dK_s^{p-}\bigr)
\\
&\quad\leq2\mathbb{E}\int_{t}^{T}e^{\beta A(s)}
\bigl(Y_{s}^{n}-L_{s}\bigr)^-dK_s^{p+}
+2\mathbb{E}\int_{t}^{T}e^{\beta A(s)}
\bigl(Y_{s}^{p}-L_{s}\bigr)^-dK_s^{n+}
\\
&\qquad+2\mathbb{E}\int_{t}^{T}e^{\beta A(s)}
\bigl(Y_{s}^{n}-U_{s}\bigr)^+dK_s^{p-}
+2\mathbb{E}\int_{t}^{T}e^{\beta A(s)}
\bigl(Y_{s}^{p}-U_{s}\bigr)^+dK_s^{n-}.
\end{align*}
Hence
\begin{align*}
&\beta\mathbb{E}\int_{t}^{T}e^{\beta A(s)}a^2(s)\big|Y_{s}^{n}-Y_{s}^p\big|^2ds
+\mathbb{E}\int_{t}^{T}e^{\beta A(s)}\big|Z_{s}^{n}-Z_{s}^p\big|^2ds
\\
&\quad\leq2\mathbb{E}\sup_{0\leq t\leq T}e^{\beta A(t)}
\bigl(Y_{t}^{n}-L_{t}\bigr)^-K_T^{p+}
+2\mathbb{E}\sup_{0\leq t\leq T}e^{\beta
A(t)}\bigl(Y_{t}^{p}-L_{t}
\bigr)^-K_T^{n+}
\\
&\qquad+2\mathbb{E}\sup_{0\leq t\leq T}e^{\beta A(t)}\bigl(Y_{t}^{n}-U_{t}
\bigr)^+K_T^{p-} +2\mathbb{E}\sup_{0\leq t\leq T}e^{\beta A(t)}
\bigl(Y_{t}^{p}-U_{t}\bigr)^+K_T^{n-}.
\end{align*}
Lemma \ref{l3} implies that
\begin{equation}
\mathbb{E}\int_{t}^{T}e^{\beta A(s)}a^2(s)\big|Y_{s}^{n}-Y_{s}^p\big|^2ds
+\mathbb{E}\int_{t}^{T}e^{\beta
A(s)}\big|Z_{s}^{n}-Z_{s}^p\big|^2)ds
\xrightarrow[n,p\to+\infty]{}0.
\end{equation}
On the other hand, by the Burkholder--Davis--Gundy's inequality, we get
\begin{equation}
\mathbb{E}\sup_{0\leq t\leq T}e^{\beta
A(t)}\big|Y_{t}^{n}-Y_{t}^p\big|^2
\xrightarrow[n,p\to+\infty]{}0.
\end{equation}
From the equation
\begin{equation}
K^n_t=Y^n_0-Y^n_t-
\int_0^tf(s)ds+\int_0^tZ^n_sdB_s
\quad0\leq t\leq T,
\end{equation}
we can conclude that
\begin{equation}
\mathbb{E}\sup_{0\leq t\leq T}\big|K_{t}^{n}-K_{t}^p\big|^2
\xrightarrow[n,p\to +\infty]{}0.
\end{equation}
The proof is completed.
\end{proof}
The main result of this section is the following:
\begin{thm}\label{tt}
Assume that $L<U$. Then the DRBSDE (\ref{a}) has a unique solution
$(Y,Z,K^+,K^-)$ that belongs to $(\mathcal{S}^{2}(\beta,a)\cap\mathcal
{S}^{2,a}(\beta,a))\times\mathcal{H}^{2}(\beta,a)\times\mathcal
{S}^{2}\times\mathcal{S}^{2}$.
\end{thm}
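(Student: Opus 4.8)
\emph{Overview.} The plan is to follow the two-stage scheme already used for the regular-obstacle case. First I would prove the theorem when the generator does not depend on $(y,z)$, by passing to the limit in the double penalization scheme~(\ref{b}); then I would deduce the general stochastic-Lipschitz case by a contraction argument in $\mathfrak{B}^2$, exactly as in Theorem~\ref{fpt}.

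\emph{The case $f(t,y,z)=f(t)$.} Take the penalized quadruples $(Y^{n},Z^{n},K^{n+},K^{n-})$ of~(\ref{b}) with $f/a\in\mathcal{H}^{2}(\beta,a)$. Lemma~\ref{l2} provides uniform a priori bounds, Lemma~\ref{l3} shows that $(Y^{n}-U)^{+}\to 0$ and $(Y^{n}-L)^{-}\to 0$ in $\mathcal{S}^{2}(\beta,a)$, and Lemma~\ref{l4} shows that $(Y^{n},Z^{n},K^{n})$ is a Cauchy sequence. Hence there are $Y\in\mathcal{S}^{2}(\beta,a)\cap\mathcal{S}^{2,a}(\beta,a)$, $Z\in\mathcal{H}^{2}(\beta,a)$ and a continuous finite-variation process $K$ with $Y^{n}\to Y$, $Z^{n}\to Z$ and $K^{n}\to K$ in the relevant norms; letting $n\to\infty$ in~(\ref{b}) yields
\[
Y_{t}=\xi+\int_{t}^{T} f(s)\,ds+(K_{T}-K_{t})-\int_{t}^{T} Z_{s}\,dB_{s},\qquad t\le T,
\]
and Lemma~\ref{l3} gives $L_{t}\le Y_{t}\le U_{t}$ for every $t\le T$.

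\emph{Splitting $K$ and the Skorokhod conditions.} Because $L<U$, the measures $dK^{n+}_{s}=n(Y^{n}_{s}-L_{s})^{-}\,ds$ and $dK^{n-}_{s}=n(Y^{n}_{s}-U_{s})^{+}\,ds$ are carried by the disjoint sets $\{Y^{n}_{s}<L_{s}\}$ and $\{Y^{n}_{s}>U_{s}\}$, so $K^{n+}$ and $K^{n-}$ are the positive and negative variations of $K^{n}$. Combining this with Lemma~\ref{l3} and the convergence result of Saisho~\cite{YS} already invoked for~(\ref{e7}) in the proof of Theorem~\ref{t}, I would show that $K^{n+}\to K^{+}$ and $K^{n-}\to K^{-}$ in $\mathcal{S}^{2}$ for continuous increasing processes $K^{\pm}$ with $K^{\pm}_{0}=0$ and $K=K^{+}-K^{-}$, with $dK^{+}$ carried by $\{Y=L\}$ and $dK^{-}$ carried by $\{Y=U\}$; equivalently $\int_{0}^{T}(Y_{t}-L_{t})\,dK^{+}_{t}=\int_{0}^{T}(U_{t}-Y_{t})\,dK^{-}_{t}=0$. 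This shows $(Y,Z,K^{+},K^{-})$ solves~(\ref{a}) with data $(\xi,f,L,U)$, and uniqueness follows by applying the a priori estimate of Section~\ref{s2} to the difference of two solutions, the reflection cross term being nonpositive since $dK^{+}$ lives on $\{Y=L\}$ while the competing solution stays above $L$, and symmetrically for $K^{-}$.

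\emph{General $f$ and the main obstacle.} For $(\phi,\psi)\in\mathfrak{B}^{2}$, assumptions $(H2)$--$(H4)$ give $|f(t,\phi_{t},\psi_{t})|^{2}\le 3\bigl(a(t)^{4}|\phi_{t}|^{2}+a(t)^{2}|\psi_{t}|^{2}+|f(t,0,0)|^{2}\bigr)$, so $f(\cdot,\phi_{\cdot},\psi_{\cdot})/a\in\mathcal{H}^{2}(\beta,a)$; by the previous steps the DRBSDE with generator $f(\cdot,\phi_{\cdot},\psi_{\cdot})$ and barriers $L<U$ has a unique solution, and I define $\varphi(\phi,\psi)=(Y,Z)\in\mathfrak{B}^{2}$. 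Applying It\^o's formula to $e^{\beta A(t)}|Y_{t}-Y_{t}'|^{2}$ for the solutions attached to $(\phi,\psi)$ and $(\phi',\psi')$, using that the reflection cross term is nonpositive and Young's inequality, gives $\|\varphi(\phi,\psi)-\varphi(\phi',\psi')\|_{\beta}^{2}\le\frac{1}{2}\|(\phi,\psi)-(\phi',\psi')\|_{\beta}^{2}$ for $\beta$ large, exactly as in Theorem~\ref{fpt}; hence $\varphi$ is a strict contraction on $\mathfrak{B}^{2}$ and its unique fixed point is the unique solution of~(\ref{a}). The hard part will be the splitting step: uniform convergence of $K^{n}$ alone does not yield separate limits for $K^{n\pm}$, and recovering both minimality conditions at once genuinely uses $L<U$ (disjointness of the penalization supports), Lemma~\ref{l3}, and---through the uniform bounds of Lemma~\ref{l2}---hypothesis $(\mathcal{H}7)$.
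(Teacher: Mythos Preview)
Your overall scheme---penalize, pass to the limit, then run the contraction of Theorem~\ref{fpt}---matches the paper, and your handling of the fixed-point step, of $L\le Y\le U$ (via Lemma~\ref{l3}), and of uniqueness is fine. The real divergence from the paper, and the one genuine gap, is in the splitting of $K$.

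You assert that, from Lemma~\ref{l3} together with Saisho's convergence result, one obtains $K^{n+}\to K^{+}$ and $K^{n-}\to K^{-}$ in $\mathcal{S}^{2}$. But Saisho's lemma (as used at~(\ref{e7})) concerns convergence of integrals $\int\phi_{n}\,dK_{n}$ once one \emph{already} has a limiting measure; it does not manufacture separate limits of $K^{n\pm}$ from convergence of $K^{n}$. You correctly observe that uniform convergence of $K^{n}$ does not force convergence of its Jordan parts, yet the mechanism you propose does not fill this gap, and in fact the paper never proves $\mathcal{S}^{2}$-convergence of $K^{n\pm}$ at all. What the paper does instead is: use the uniform $\mathcal{L}^{2}$-bounds on $K^{n\pm}_{\tau}$ (Lemma~\ref{l2}, which is where $(\mathcal{H}7)$ enters) to extract \emph{weak} $\mathcal{L}^{2}$-limits $K^{\pm}_{\tau}$ at every stopping time; pass to strongly convergent convex combinations via Mazur's lemma and check that their difference agrees with $K_{\tau}$; invoke the section theorem (Dellacherie--Meyer) to get $K^{+}_{t}-K^{-}_{t}=K_{t}$ for all $t$; and finally use a Helly-type selection (Chung, Theorem~4.3.3) pathwise to get weak convergence of $\mathcal{K}^{n\pm}_{t}(\omega)$ to $K^{\pm}_{t}(\omega)$. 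Only after this does Saisho's lemma come in, exactly as in~(\ref{c7})--(\ref{c8}), to verify the two Skorokhod conditions. So your proposal is right about the ingredients ($L<U$, Lemma~\ref{l2}, Lemma~\ref{l3}, $(\mathcal{H}7)$) but wrong about how they are assembled: replace the unjustified claim of strong $\mathcal{S}^{2}$-convergence of $K^{n\pm}$ by the weak-compactness/Mazur/section-theorem route.
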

\begin{proof}
From Lemma \ref{l4}, we obtain that there exists an adapted process
$(Y,Z,K)\in(\mathcal{S}^{2}(\beta,a)\cap\mathcal{S}^{2,a}(\beta
,a))\times\mathcal{H}^{2}(\beta,a)\times\mathcal{S}^{2}$ such that
\begin{align}
\label{c4} &\mathbb{E} \Biggl[\sup_{0\leq t\leq T}e^{\beta
A(t)}\big|Y_{t}^{n}-Y_{t}\big|^{2}+
\int_{0}^{T}e^{\beta
A(t)}a^2(t)\big|Y_{t}^{n}-Y_{t}\big|^{2}dt
\\
&\quad+\int_{0}^{T}e^{\beta
A(t)}\big|Z_{t}^{n}-Z_{t}\big|^{2}dt+
\sup_{0\leq t\leq
T}\big|K_{t}^{n}-K_{t}\big|^{2}
\Biggr]\xrightarrow[n\to+\infty]{}0
\nonumber
.
\end{align}
Then, passing to the limit as $n\rightarrow+\infty$ in the equation
\[
Y^n_t=\xi+\int_t^Tf(s)ds+K^n_T-K^n_t-
\int_t^TZ^n_sdB_s,
\]
we obtain
\[
Y_t=\xi+\int_t^Tf(s)ds+K_T-K_t-
\int_t^TZ_sdB_s.
\]
Let $\tau\leq T$ be a stopping time, by Lemma \ref{l2} we obtain that
the sequences $K^{n\pm}_\tau$ are bounded in $\mathcal{L}^2$,
consequently, there exist $\mathcal{F}_\tau$-measurable random
variables $K^{\pm}_\tau$ in $\mathcal{L}^2$, such that there exist the
subsequences of $K^{n\pm}_\tau$ weakly converging in $K^{\pm}_\tau$.

Now we set $\mathcal{K}_\tau=K^+_\tau-K^-_\tau$. By \cite{Yosida}
(Mazu's Lemma, p. 120), there exists, for every $n\in\mathbb{N}$, an
integer $N\geq n$ and a convex combination $\sum_{j=n}^N\zeta_j^{\tau
,n}(K^{\pm}_\tau)_j$ with $\zeta_j^{\tau,n}\geq0$ and $\sum_{j=n}^N\zeta_j^{\tau,n}=1$ such that
\begin{equation}
\label{c6} \mathcal{K}_\tau^{n\pm}:=\sum
_{j=n}^N\zeta_j^{\tau,n}
\bigl(K^{\pm}_\tau \bigr)_j\xrightarrow[n\to+
\infty]{}K^{\pm}_\tau.
\end{equation}
Denoting $\mathcal{K}_\tau^{n}=\mathcal{K}_\tau^{n+}-\mathcal{K}_\tau
^{n-}$, it follows that
\begin{equation}
\label{c5} \mathbb{E}\big|\mathcal{K}_\tau^{n}-
\mathcal{K}_\tau\big|^2\xrightarrow[n\to +\infty]{}0.
\end{equation}
Thanks to (\ref{c4}), we have $\|K_\tau^{n}-K_\tau\|_{\mathcal
{L}^2}<\varepsilon$ for all $\varepsilon>0$. Therefore
\begin{align*}
\big\|\mathcal{K}_\tau^{n}-K_\tau
\big\|_{\mathcal{L}^2}&=\Bigg\|\sum_{j=n}^N\zeta
_j^{\tau,n}\bigl(\bigl(K^{\pm}_\tau
\bigr)_j-K_\tau\bigr)\Bigg\|_{\mathcal{L}^2}
\\
&\leq \sum_{j=n}^N
\zeta_j^{\tau,n}\big\|\bigl(K^{\pm}_\tau
\bigr)_j-K_\tau\big\|_{\mathcal
{L}^2}<\varepsilon.
\end{align*}
Hence
\begin{equation}
\label{c5new} \mathbb{E}\big|\mathcal{K}_\tau^{n}-K_\tau\big|^2
\xrightarrow[n\to+\infty]{}0.
\end{equation}
Combining (\ref{c5}) and (\ref{c5new}), we obtain $\mathcal{K}_\tau=K_\tau
$ a.s. Therefore, from Theorem 86, p. 220 in \cite{DM} we have $\mathcal
{K}_t=K_t$ for all $t\leq T$. On the other hand, (\ref{c6}) implies
that, for $\tau=T$, there exists a subsequence of $\mathcal
{K}_T^{n+}:=\sum_{j=n}^N\zeta_j^{T,n}(K^{+}_T)_j$ (resp. $\mathcal
{K}_T^{n-}:=\sum_{j=n}^N\zeta_j^{T,n}(K^{-}_T)_j$) converging a.s. to
$K^+_T$ (resp. $K^-_T$). Then for $\mathbb{P}$-a.s. $\omega\in\varOmega$,
the sequence $\mathcal{K}^{n+}_T(\omega)$ (resp. $\mathcal
{K}^{n-}_T(\omega)$) is bounded. Using Theorem 4.3.3, p. 88 in \cite
{Chung}, there exists a subsequence of $\mathcal{K}_t^{n+}(\omega)$
(resp. $\mathcal{K}_t^{n-}(\omega)$) tending to $K^{+}_t(\omega)$
(resp. $K^{-}_t(\omega)$), weakly.

On the other hand, by the definition of stopping times $(\tau_l)_{l\geq
0}$, we have
\[
\left\{ %
\begin{array}{@{}ll}
Y_t^{n}> L_t , & \hbox{on $[\tau_{2l},\tau_{2l+1}[$;} \\
Y_t^{n}< U_t , & \hbox{on $[\tau_{2l+1},\tau_{2l+2}[$.}
\end{array} \right.
\]
Then
\[
L_t \mathbh{1}_{[\tau_{2i},\tau_{2i+1}]}(t)\leq Y_t^{n}
\leq U_t \mathbh {1}_{[\tau_{2i+1},\tau_{2i+2}]}(t).
\]
By summing in $i$, $i=0,\ldots, l$ and passing to limit in $n$, we obtain
$L_t\leq Y_t\leq U_t$.
Now, we would have to show the Skorokhod's conditions. Indeed, since
$\mathcal{K}_t^{n+}(\omega)$ tends to $K^{+}_t(\omega)$, using the
result treated in p. 465 of \cite{YS} we can write
\begin{equation}
\label{c7} \int_{0}^{T}\bigl(Y^n_{t}(
\omega)-L_t(\omega)\bigr)d\mathcal{K}_t^{n+}(
\omega )\xrightarrow[n\to+\infty]{}\int_{0}^{T}
\bigl(Y_{t}(\omega)-L_t(\omega )\bigr)dK^{+}_{t}(
\omega).
\end{equation}
Since $\int_{0}^{T}(Y^n_{t}-L_t)dK^{n+}_{t}\leq0$, $\forall n\geq0$ a.s.,
and $\forall n,m\geq0$, $n\neq m$,
\[
\mathbb{E} \Biggl[ \Biggl\llvert \int_0^T
\bigl(Y_{t}^{n}-Y_{t}^m
\bigr)dK^{m+}_t \Biggr\rrvert \Biggr] \leq\mathbb{E} \Bigl[
\sup_{0\leq t\leq T}e^{\beta
A(t)}\big|Y_{t}^{n}-Y_{t}^m\big|K^{m+}_T
\Bigr]\xrightarrow[n,m\to+\infty]{}0,
\]
then by
\[
\int_0^T\bigl(Y_{t}^{n}-L_{t}
\bigr)dK^{m+}_t=\int_0^T
\bigl(Y_{t}^{n}-Y_{t}^m
\bigr)dK^{m+}_t+\int_0^T
\bigl(Y_{t}^{m}-L_{t}\bigr)dK^{m+}_t
\]
we have
\begin{equation}
\label{c8} \limsup_{n\rightarrow+\infty}\int_0^T
\bigl(Y^n_{t}-L_t\bigr)d\mathcal
{K}^{n+}_t\leq0\quad\mathbb{P}\mbox{-a.s.}
\end{equation}
Combining (\ref{c7}) and (\ref{c8}), we get $\int_0^T(Y_{t}-L_t)dK^{+}_t\leq0\
\mathbb{P}\mbox{-a.s.}$ Noting that
$Y\geq L$, we conclude that $\int_0^T(Y_{t}-L_t)dK^{+}_t=0$. By a
similar 
consideration, we can prove $\int_0^T(U_{t}-Y_t)dK^{-}_t=0$.

Finally, using the fixed point theorem 
we construct a strict contraction mapping $\varphi$ on $\mathfrak{B}^2$
and 
conclude that $(Y_t,Z_t,K^+_t,K^-_t)$ is the unique solution to DRBSDE
(\ref{a}) associated with data $(\xi,f,L,U)$.
\end{proof}
%
\section{Comparison theorem}\label{s4}
In this section we prove a comparison theorem for the DRBSDE under the
stochastic Lipschitz assumptions on generators.
\begin{thm}\label{tcc}
Let $(Y^1,Z^1,K^{1+},K^{1-})$ and $(Y^2,Z^2,K^{2+},K^{2-})$ be
respectively the solutions to the DRBSDE with data $(\xi
^1,f^1,L^1,U^1)$ and $(\xi^2,f^2,L^2,U^2)$. Assume in addition the following:
\begin{itemize}
\item$\xi^1\leq\xi^2$ a.s.
\item$f^1(t,Y^2,Z^2)\leq f^2(t,Y^2,Z^2)$ \quad$\forall t\in[0,T]$ a.s.
\item$L^1_{t}\leq L^2_{t}$ and $U^1_{t}\leq U^2_{t}$\quad$\forall
t\in[0,T]$ a.s.
\end{itemize}
Then
\[
\forall t\leq T,\qquad Y^1_{t}\leq Y^2_{t}
\quad\mathit{a.s.}
\]
\end{thm}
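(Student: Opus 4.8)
The plan is to run the standard positive-part comparison argument in the weighted norm $e^{\beta A(\cdot)}$ dictated by the stochastic-Lipschitz setting. Put $\Delta Y=Y^1-Y^2$, $\Delta Z=Z^1-Z^2$, $\Delta K^{\pm}=K^{1\pm}-K^{2\pm}$ and $\Delta\xi=\xi^1-\xi^2\le 0$. Since $\phi(x)=(x^+)^2$ is $C^1$ with absolutely continuous derivative $\phi'(x)=2x^+$ and second derivative $\phi''=2\,\mathbh{1}_{\{x>0\}}$ carrying no mass at $0$, the It\^{o}--Tanaka--Meyer formula applies to $e^{\beta A(t)}\bigl((\Delta Y_t)^+\bigr)^2$ with no local-time term. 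Applying it on $[t,T]$, using $(\Delta\xi)^+=0$, and taking expectations (the stochastic integral being a true martingale by a localization argument together with $Y^1,Y^2\in\mathcal{S}^2(\beta,a)$ and $\Delta Z\in\mathcal{H}^2(\beta,a)$) yields
\begin{align*}
\mathbb{E}&\bigl[e^{\beta A(t)}\bigl((\Delta Y_t)^+\bigr)^2\bigr]+\beta\,\mathbb{E}\int_t^T e^{\beta A(s)}a^2(s)\bigl((\Delta Y_s)^+\bigr)^2ds+\mathbb{E}\int_t^T e^{\beta A(s)}\mathbh{1}_{\{\Delta Y_s>0\}}|\Delta Z_s|^2ds\\
&=2\,\mathbb{E}\int_t^T e^{\beta A(s)}(\Delta Y_s)^+\bigl[f^1(s,Y^1_s,Z^1_s)-f^2(s,Y^2_s,Z^2_s)\bigr]ds\\
&\quad+2\,\mathbb{E}\int_t^T e^{\beta A(s)}(\Delta Y_s)^+\,d\bigl(\Delta K^+_s-\Delta K^-_s\bigr).
\end{align*}

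For the generator term I would split $f^1(s,Y^1,Z^1)-f^2(s,Y^2,Z^2)=\bigl[f^1(s,Y^1,Z^1)-f^1(s,Y^2,Z^2)\bigr]+\bigl[f^1(s,Y^2,Z^2)-f^2(s,Y^2,Z^2)\bigr]$: the second bracket is nonpositive by hypothesis and stays so after multiplication by $(\Delta Y_s)^+\ge 0$, while the first bracket is controlled by $(H2)$ for $f^1$, namely $(\Delta Y_s)^+\bigl|f^1(s,Y^1,Z^1)-f^1(s,Y^2,Z^2)\bigr|\le\mu(s)\bigl((\Delta Y_s)^+\bigr)^2+\gamma(s)(\Delta Y_s)^+|\Delta Z_s|\mathbh{1}_{\{\Delta Y_s>0\}}$. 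Young's inequality together with the structural identity $a^2=\mu+\gamma^2$ then bounds $2(\Delta Y_s)^+\bigl[f^1(s,Y^1,Z^1)-f^1(s,Y^2,Z^2)\bigr]$ by $C\,a^2(s)\bigl((\Delta Y_s)^+\bigr)^2+\frac{1}{2}|\Delta Z_s|^2\mathbh{1}_{\{\Delta Y_s>0\}}$ with a $\beta$-independent constant $C$ and a $|\Delta Z|^2$-coefficient strictly below $1$.

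The heart of the argument is the sign of the reflection term, which equals $2\int_t^T e^{\beta A(s)}(\Delta Y_s)^+\bigl(dK^{1+}_s-dK^{2+}_s-dK^{1-}_s+dK^{2-}_s\bigr)$. Using the Skorokhod conditions $dK^{i+}_s=\mathbh{1}_{\{Y^i_s=L^i_s\}}dK^{i+}_s$, $dK^{i-}_s=\mathbh{1}_{\{Y^i_s=U^i_s\}}dK^{i-}_s$ and the barrier ordering: on $\{Y^1_s=L^1_s\}$ one has $Y^1_s=L^1_s\le L^2_s\le Y^2_s$, hence $(\Delta Y_s)^+=0$ and the $dK^{1+}$-contribution vanishes; on $\{Y^2_s=U^2_s\}$ one has $Y^1_s\le U^1_s\le U^2_s=Y^2_s$, hence $(\Delta Y_s)^+=0$ and the $dK^{2-}$-contribution vanishes; the remaining contributions $-(\Delta Y_s)^+dK^{2+}_s$ and $-(\Delta Y_s)^+dK^{1-}_s$ are nonpositive since $K^{2+}$ and $K^{1-}$ are nondecreasing and $(\Delta Y_s)^+\ge 0$. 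Hence the reflection term is $\le 0$.

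Collecting these estimates and taking $\beta$ large enough to absorb the $a^2\bigl((\Delta Y)^+\bigr)^2$-terms and the $|\Delta Z|^2$-terms into the left-hand side, the identity collapses to
\[
\mathbb{E}\bigl[e^{\beta A(t)}\bigl((\Delta Y_t)^+\bigr)^2\bigr]+(\beta-C)\,\mathbb{E}\int_t^T e^{\beta A(s)}a^2(s)\bigl((\Delta Y_s)^+\bigr)^2ds+\frac{1}{2}\,\mathbb{E}\int_t^T e^{\beta A(s)}\mathbh{1}_{\{\Delta Y_s>0\}}|\Delta Z_s|^2ds\le 0,
\]
which forces all three terms to vanish; in particular $(\Delta Y_t)^+=0$ a.s.\ for every fixed $t$, and by path-continuity of $Y^1-Y^2$ this holds for all $t\le T$ simultaneously, i.e.\ $Y^1_t\le Y^2_t$ a.s. I expect the reflection-term sign analysis to be the main substantive step, with the It\^{o}--Tanaka application (vanishing local time) and the martingale/localization bookkeeping being routine but worth stating explicitly.
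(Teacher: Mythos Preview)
Your argument is correct, and the decisive step --- showing that $(\Delta Y_s)^+ d(\Delta K^+_s-\Delta K^-_s)\le 0$ via the Skorokhod conditions and the barrier ordering --- is identical in substance to the paper's. Where you diverge is in handling the generator and absorbing constants: the paper linearizes $f^1(s,Y^1,Z^1)-f^1(s,Y^2,Z^2)=\zeta_s\bar Y_s+\eta_s\bar Z_s$, introduces the stochastic exponential $\Gamma_{t,s}=\exp\bigl(\int_t^s(\zeta_u-\tfrac12|\eta_u|^2)du+\int_t^s\eta_u dB_u\bigr)$, does integration by parts against $\Gamma$, and then uses $|\zeta_s|\le a^2(s)$ to close. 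This Girsanov-type route costs the paper an extra integrability hypothesis $\mathbb{E}\int_0^T\mu_t\,dt<\infty$, $\mathbb{E}\int_0^T|\gamma_t|^2dt<\infty$ (to make $\Gamma$ well-defined), which is not part of the standing assumptions. Your approach --- working directly with $e^{\beta A(t)}((\Delta Y_t)^+)^2$, using the stochastic Lipschitz bound $(H2)$ plus Young, and taking $\beta$ large --- is more elementary, avoids that extra hypothesis, and is more in line with how the rest of the paper handles weighted estimates. Both are standard; yours is arguably cleaner here.
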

\begin{proof}
Let $\bar{\Re}=\Re^1-\Re^2$ for $\Re=Y,Z,K^+,K^+,\xi$ and
\begin{itemize}
\item$\zeta_t=\mathbh{1}_{\{\bar{Y}_{t}\neq0\}}\displaystyle\frac
{f^1(t,Y^1_{t},Z^1_{t})-f^1(t,Y^2_{t},Z^1_{t})}{\bar{Y}_{t}}$;
\item$\eta_t=\mathbh{1}_{\{\bar{Z}_{t}\neq0\}}\displaystyle\frac
{f^1(t,Y^2_{t},Z^1_{t})-f^1(t,Y^2_{t},Z^2_{t})}{\bar{Z}_{t}}$;
\item$\delta_t=f^1(t,Y^2_{t},Z^2_{t})-f^2(t,Y^2_{t},Z^2_{t})$.
\end{itemize}
Applying the Meyer--It\^{o} formula (Theorem 66, p. 210 in \cite{Pro}),
there exists a continuous nondecreasing process $(\mathcal{A}_t)_{t\leq
T}$ such that
\begin{align*}
\big|\bar{Y}_{t}^{+}\big|^{2} &=2\int
_{t}^{T}\bar{Y}_{s}^{+} (
\zeta_s\bar{Y}_{s}+\eta_s\bar
{Z}_{s}+\delta_s )ds -2\int_{t}^{T}
\bar{Y}_{s}^{+}\bar{Z}_{s}dB_{s}
\\
&\quad+2\int_{t}^{T}\bar{Y}_{s}^{+}d
\bar{K}^+_{s}-2\int_{t}^{T}\bar
{Y}_{s}^{+}d\bar{K}^-_{s}-(\mathcal{A}_T-
\mathcal{A}_t).
\end{align*}
Suppose in addition that
\[
\mathbb{E}\int_0^T\mu_tdt<+\infty
\quad\mbox{ and }\quad \mathbb{E}\int_0^T|
\gamma_t|^2dt<+\infty.
\]
Let $\{\varGamma_{t,s}, 0\leq t\leq s\leq T\}$ be the process defined as
\[
\varGamma_{t,s}=\exp \Biggl\{ \int_{t}^{s}
\biggl(\zeta_{u}-\frac{1}{2}|\eta _{u}|^{2}
\biggr)du+\int_{t}^{s}\eta_{u}dB_{u}
\Biggr\}>0
\]
being a solution to the linear stochastic differential equation
\[
\varGamma_{t,s}=1+ \int_{t}^{s}
\zeta_{u}\varGamma_{t,u}du+\int_{t}^{s}
\eta _{u}\varGamma_{t,u}dB_{u}.
\]
%
Applying the integration by parts and taking expectation yield
\begin{align*}
&\mathbb{E} \bigl[e^{\beta A(t)}\big|\bar{Y}_{t}^{+}\big|^{2}
\bigr] +\beta\mathbb{E}\int_{0}^{T}e^{\beta A(s)}
\varGamma_{t,s}a^2(s)\big|\bar {Y}_{s}^{+}\big|^2ds
\\
&\quad\leq\mathbb{E} \Biggl[\int_{t}^{T}e^{\beta A(s)}
\varGamma_{t,s}\zeta _s{\big|\bar{Y}_{s}^+\big|}^2
ds \Biggr] +2\mathbb{E} \Biggl[\int_{t}^{T}e^{\beta A(s)}
\varGamma_{t,s}\delta_s\bar {Y}_{s}^{+}ds
\Biggr]
\\
&\qquad+2\mathbb{E}\int_t^Te^{\beta A(s)}
\varGamma_{t,s}\bar{Y}_{s}^{+}dK^+_s
-2\mathbb{E}\int_t^Te^{\beta A(s)}
\varGamma_{t,s}\bar{Y}_{s}^{+}dK^-_s.
\end{align*}
Remark that
\[
\bar{Y}_{s}^{+}d\bar{K}^+_s=
\bigl(L^1_s-Y^2_s\bigr)\mathbh
{1}_{Y^1_s>Y^2_s}dK^{1+}_s-\bigl(Y^1_s-L^2_s
\bigr)\mathbh {1}_{Y^1_s>Y^2_s}dK^{2+}_s\leq0
\]
and
\[
\bar{Y}_{s}^{+}d\bar{K}^-_s=
\bigl(Y^1_s-U^2_s\bigr)\mathbh
{1}_{Y^1_s>Y^2_s}dK^{2-}_s-\bigl(U^1_s-Y^2_s
\bigr)\mathbh {1}_{Y^1_s>Y^2_s}dK^{1-}_s\leq0.
\]
Since $\delta_s\leq0$ and $|\zeta_s|\leq a^2(s)$, one can derive that
\[
\mathbb{E} \bigl[e^{\beta A(t)}\big|\bar{Y}_{t}^{+}\big|^{2}
\bigr]\leq0.
\]
It follows that $\bar{Y}_{t}^{+}=0$, i.e $Y^1_{t}\leq Y^2_{t}$ for all
$t\leq T$ a.s.
\end{proof}
\begin{remark}\label{Rmqcomp}
\mbox{}
\begin{itemize}
\item If $U^i=+\infty$ for $i=1,2$, then $dK^{i-}=0$ and the comparison
holds also for the reflected BSDE (\ref{RBSDE}).
\item If $U^i=+\infty$ and $L^i=-\infty$ for $i=1,2$, then $dK^{i\pm
}=0$ and the comparison holds also for the BSDE (\ref{BSDE}).
\end{itemize}
\end{remark}
%
\begin{appendix}
\section{Appendix}\label{apdx}
In this section, we study a special case of the reflected BSDE when the
generator depends only on $y$.

We consider the following reflected BSDE
\begin{eqnarray}
\label{r} && \displaystyle \left\{ %
\begin{array}{@{}ll}
Y_{t}=\xi+\displaystyle\int_{t}^{T}
f(s,Y_{s})ds+K_{T}-K_{t}-\displaystyle\int_{t}^{T}Z_{s}dB_{s} & \hbox{}
\\
Y_{t}\geq L_{t} \ \forall t\leq T \ \mbox{ and }\ \displaystyle\int_{0}^{T}(Y_{t}-L_{t})dK_{t}=0 & \hbox{}
\end{array}    \right.
\end{eqnarray}
where $(\xi,f,L)$ satisfies the following assumptions:
\begin{itemize}
\item$\xi\in\mathcal{S}^2(\beta,a)$;
\item$f$ is Lipschitz, i.e. there exists a positive constant $\mu$
such that $\forall(t,y,y')\in[0,T]\times\mathbb{R}\times\mathbb{R}$
\[
\big|f(t,y)-f\bigl(t,y'\bigr)\big|\leq\mu\big|y-y'\big|;
\]
\item$\displaystyle\frac{f(t,0)}{a}\in\mathcal{H}^2(\beta,a)$;
\item$\mathbb{E} [\sup\limits_{0\leq t\leq T}e^{2\beta
A(t)}|L_t^+|^2 ]<+\infty$.
\end{itemize}
As in \cite{EKPPQ}, we prove the existence and uniqueness of a solution
to (\ref{r}) by means of the penalization method. Indeed, for each $n\in
\mathbb{N}$, we consider the following BSDE:
\begin{equation}
\label{rr} Y_{t}^{n}=\xi+\int_{t}^{T}
f\bigl(s,Y_{s}^{n}\bigr)ds+n\int_{t}^{T}
\bigl(Y^n_s-L_s\bigr)^-ds-\int
_{t}^{T}Z_{s}^{n}dB_{s}.
\end{equation}
We denote $K_{t}^{n}:=n\int_{0}^{t}(Y_{s}^{n}-L_{s})^{-}ds$ and
$f^n(t,y)=f(t,y)+n(y-L_t)^-$.
Remark that $f^n$ is Lipschitz and
\begin{align*}
\mathbb{E}|\xi|^{2}+\mathbb{E}\int_0^T
\bigl\llvert f^n(t,0) \bigr\rrvert ^{2}dt
&\leq\mathbb{E} \bigl[e^{\beta A(T)}|\xi|^{2}
\bigr] +\frac{2}{\beta}\mathbb{E} \Biggl[\int_0^Te^{\beta A(t)}
\biggl\llvert \frac
{f(t,0)}{a(t)} \biggr\rrvert ^{2}dt \Biggr]\\
&\quad+2n^2T\mathbb{E} \Bigl[\sup_{0\leq t\leq T}e^{2\beta A(t)}\big|L_t^+\big|^2
\Bigr].
\end{align*}
From \cite{PP}, there exists a unique process $(Y^n,Z^n)$ being a
solution to the BSDE (\ref{rr}). The sequence $(Y^n,Z^n,K^n)_n$
satisfies the uniform estimate
\begin{align*}
&\mathbb{E}\sup_{0\leq t\leq T}e^{\beta A(t)}\big|Y_{t}^{n}\big|^{2}
+\mathbb{E} \Biggl[\int_{0}^{T}e^{\beta A(s)}a^2(s)\big|Y_{s}^{n}\big|^{2}ds
+\mathbb{E}\int_{0}^{T}e^{\beta A(s)}\big|Z_{s}^{n}\big|^{2}ds
\Biggr]
\\
&\quad\leq C\mathbb{E} \Biggl[e^{\beta A(T)}|\xi|^2+\int
_{0}^{T}e^{\beta
A(s)}\frac{|f(s,0)|^2}{a^2(s)}ds +\sup
_{0\leq t\leq T}e^{2\beta A(s)}\big|L_{s}^+\big|^2
\Biggr].
\end{align*}
where $C$ is a positive constant depending only on $\beta$, $\mu$ and
$\epsilon$.

Now we establish the convergence of sequence $(Y^n,Z^n,K^n)$ to the
solution to (\ref{r}). Obviously $f^n(t,y)\leq f^{n+1}(t,y)$ for each
$n\in\mathbb{N}$, and it follows from Remark \ref{Rmqcomp} that
$Y^n\leq Y^{n+1}$. Hence there exists a process $Y$ such that
$Y^n_t\nearrow Y_t$ $0\leq t\leq T$ a.s. From the a priori estimates
and Fatou's lemma, we have
\begin{equation*}
\mathbb{E} \Bigl[\sup_{0\leq t\leq T}e^{\beta A(t)}|Y_{t}|^{2}
\Bigr]\leq \liminf_{n\rightarrow+\infty}\mathbb{E} \Bigl[\sup
_{0\leq t\leq
T}e^{\beta A(t)}\big|Y_{t}^{n}\big|^{2}
\Bigr] \leq C.
\end{equation*}
Then by the dominated convergence, one can derive that
\[
\mathbb{E} \Biggl[\int_{0}^{T}e^{\beta A(s)}\big|Y_{s}^{n}-Y_{s}\big|^2ds
\Biggr]\xrightarrow[n\to+\infty]{}0.
\]
On the other hand, for all $n\geq p\geq0$ and $t\leq T$, we have
\begin{align*}
\label{ee1} &\mathbb{E}e^{\beta A(t)}\big|Y_{t}^{n}-Y_{t}^p\big|^{2}
+\biggl(\beta-\frac{2\mu}{\epsilon}\biggr)\mathbb{E}\int_{t}^{T}e^{\beta
A(s)}a^2(s)\big|Y_{s}^{n}-Y_{s}^p\big|^2ds
\\
&\quad+\mathbb{E}\int_{t}^{T}e^{\beta
A(s)}\big|Z_{s}^{n}-Z_{s}^p\big|^2ds
\nonumber
\\
&\qquad\leq2\mathbb{E}\int_{t}^{T}e^{\beta A(s)}
\bigl(Y_{s}^{n}-L_{s}\bigr)^-dK_{s}^{p}
+\mathbb{E}\int_{t}^{T}e^{\beta A(s)}
\bigl(Y_{s}^{p}-L_{s}\bigr)^-dK_{s}^{n}.
\end{align*}
%
Similarly to Lemma \ref{l3}, we can easily prove that
\begin{equation}
\label{e11} \mathbb{E}\sup_{0\leq t\leq T}e^{\beta
A(t)}\big|
\bigl(Y_{t}^{n}-L_{t}\bigr)^-\big|^{2}
\xrightarrow[n\to+\infty]{}0.
\end{equation}
By the above result an the a priori estimates, one can derive that
\[
\mathbb{E} \Biggl[\int_{t}^{T}e^{\beta A(s)}
\bigl(Y_{s}^{n}-L_{s}\bigr)^-dK_{s}^{p}
+\int_{t}^{T}e^{\beta A(s)}\bigl(Y_{s}^{p}-L_{s}
\bigr)^-dK_{s}^{n} \Biggr]\xrightarrow[n,p\to+\infty]{}0.
\]
Thus
\[
\mathbb{E} \Biggl[\int_{t}^{T}e^{\beta
A(s)}a^2(s)\big|Y_{s}^{n}-Y_{s}^p\big|^2ds+
\int_{t}^{T}e^{\beta
A(s)}\big|Z_{s}^{n}-Z_{s}^p\big|^2ds
\Biggr]\xrightarrow[n,p\to+\infty]{}0.
\]
Moreover, by the Burkholder--Davis--Gundy's inequality, one can derive that
\[
\mathbb{E} \Bigl[\sup\limits
_{0\leq t\leq T}e^{\beta
A(t)}\big|Y_{t}^{n}-Y_{t}^p\big|^{2}
\Bigr]\xrightarrow[n,p\to+\infty]{}0.
\]
Further, from the equation (\ref{rr}), we have also
\[
\mathbb{E} \Bigl[\sup\limits
_{0\leq t\leq T}\big|K_{t}^{n}-K_{t}^p\big|^{2}
\Bigr]\xrightarrow[n,p\to+\infty]{}0.
\]
Consequently there exists a pair of progressively measurable processes
$(Z,K)$ such that
\[
\mathbb{E}\int_0^Te^{\beta A(t)}\big|Z_{t}^{n}-Z_{t}\big|^{2}dt
+\mathbb{E}\sup_{0\leq t\leq T}\big|K_{t}^{n}-K_{t}\big|^{2}
\xrightarrow[n\to +\infty]{}0.
\]
Obviously the triplet $(Y,Z,K)$ satisfies (\ref{r}). It remains to
check the Skorokhod condition. We have just seen that the sequence
$(Y^n,K^n)$ tends to $(Y,K)$ uniformly in $t$ in probability. Then the
measure $dK^n$ tends to $dK$ weakly in probability, hence
\[
\int_0^T\bigl(Y^n_t-L_t
\bigr)dK^n_t\xrightarrow[n\to+\infty]{\mathbb{P}}\int
_0^T(Y_t-L_t)dK_t.
\]
We deduce from the equation (\ref{e11}) that $\int_0^T(Y^n_t-L_t)dK^n_t\leq0$,
$n\in\mathbb{N}$, which implies that $\int_0^T(Y_t-L_t)dK_t\leq0$.
On the other hand, since $Y_t\geq L_t$ then
$\int_0^T(Y_t-L_t)dK_t\geq0$.
Hence $\int_0^T(Y_t-L_t)dK_t=0$.
\begin{remark}[Special cases]
The coefficients $g^n(s,y)=g(s)-n(y-U_s)^+$ and $\tilde
{g}^n(s,y)=g(s)-n(y-U_s)$ are Lipschitz and satisfy
\begin{align*}
&\mathbb{E}\int_0^Te^{\beta A(s)} \biggl
\llvert \frac{g^n(s,0)}{a(s)} \biggr\rrvert ^{2}ds +\mathbb{E}\int
_0^Te^{\beta A(s)} \biggl\llvert
\frac{\tilde
{g}^n(s,0)}{a(s)} \biggr\rrvert ^{2}ds
\\
&\quad\leq 4\mathbb{E}\int_0^Te^{\beta A(s)}
\biggl\llvert \frac{g(s)}{a(s)} \biggr\rrvert ^{2}ds +
\frac{4n^2T}{\epsilon}\mathbb{E} \Bigl[\sup_{n\geq0}e^{2\beta
A(t)}\big|U_t^-\big|^2
\Bigr]<+\infty.
\end{align*}
Then the Reflected BSDEs (\ref{r1}) and (\ref{d2}) have a unique solution.
\end{remark}
\begin{thm}[Comparison theorem]\label{tc}
Let $(Y^1,Z^1,K^1)$ and $(Y^2,Z^2,K^2)$ be solutions to the Reflected
BSDE (\ref{r}) with data $(\xi^1,f^1,L)$ and $(\xi^2,f^2,L)$
respectively. If we have
\begin{itemize}
\item$f^1(t,y)\leq f^2(t,y)$ a.s. $\forall(t,y)$,
\item$\xi^1\leq\xi^2$ a.s.,
\end{itemize}
then $Y_t^1\leq Y_t^2$ and $K_t^1\geq K_t^2$ 
$\forall t\in[0,T]$ a.s.
\end{thm}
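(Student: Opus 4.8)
The plan is to route the comparison through the penalization scheme of this Appendix, for both the $Y$'s and the increasing processes. By the uniqueness part of the construction above, the solution $(Y^i,Z^i,K^i)$ of~(\ref{r}) with data $(\xi^i,f^i,L)$ coincides with the limit of the penalized triples $(Y^{i,n},Z^{i,n},K^{i,n})$, where $Y^{i,n}$ solves the BSDE~(\ref{rr}) with generator $f^{i,n}(t,y)=f^i(t,y)+n(y-L_t)^-$ and terminal value $\xi^i$, and $K^{i,n}_t=n\int_0^t(Y^{i,n}_s-L_s)^-ds$; moreover $Y^{i,n}\to Y^i$ and $K^{i,n}\to K^i$ uniformly in $t$ (in $\mathcal{L}^2$), hence, along a subsequence, for every $t\leq T$ $\mathbb{P}$-a.s.

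First I would compare the penalized problems at a fixed level $n$. Since $\xi^1\leq\xi^2$ and, for every $(t,y)$, $f^{1,n}(t,y)=f^1(t,y)+n(y-L_t)^-\leq f^2(t,y)+n(y-L_t)^-=f^{2,n}(t,y)$, with $f^{1,n},f^{2,n}$ Lipschitz in $y$ (constant $\mu+n$), the classical comparison theorem for Lipschitz BSDEs (which is also the special case $U=+\infty$, $L=-\infty$ of Theorem~\ref{tcc} via Remark~\ref{Rmqcomp}) gives $Y^{1,n}_t\leq Y^{2,n}_t$ for all $t\leq T$ a.s. Because $x\mapsto(x-L_s)^-$ is nonincreasing, this forces $(Y^{1,n}_s-L_s)^-\geq(Y^{2,n}_s-L_s)^-$ for a.e.\ $s$, and therefore $K^{1,n}_t=n\int_0^t(Y^{1,n}_s-L_s)^-ds\geq n\int_0^t(Y^{2,n}_s-L_s)^-ds=K^{2,n}_t$ for all $t\leq T$ a.s.

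Then I would pass to the limit $n\to+\infty$ along the subsequence realizing $Y^{i,n}\to Y^i$ and $K^{i,n}\to K^i$ pointwise in $t$: the inequalities $Y^{1,n}_t\leq Y^{2,n}_t$ and $K^{1,n}_t\geq K^{2,n}_t$, valid for every $n$, survive the limit and yield $Y^1_t\leq Y^2_t$ and $K^1_t\geq K^2_t$ for all $t\leq T$ a.s., which is the assertion. (Alternatively, the inequality $Y^1\leq Y^2$ alone can be proved directly by the linearization/Meyer--It\^o argument of Theorem~\ref{tcc}: with $\bar Y=Y^1-Y^2$, $\bar Y_T=\xi^1-\xi^2\leq0$, linearizing $f^1$ in $y$ with a bounded coefficient $\zeta$, $|\zeta|\leq\mu$, and $\delta=f^1(\cdot,Y^2)-f^2(\cdot,Y^2)\leq0$, one applies It\^o to $e^{\beta A(t)}|\bar Y^+_t|^2$; the reflection term is nonpositive since on $\{Y^1_s>Y^2_s\}$ one has $Y^1_s>Y^2_s\geq L_s$, so $dK^1_s=0$ there, while $-\bar Y^+_s\,dK^2_s\leq0$, and a Gronwall estimate gives $\bar Y^+\equiv0$.)

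The step I expect to be the real obstacle is the comparison of the increasing processes, not of the $Y$'s: working directly on the limiting equation~(\ref{r}) does not obviously give $K^1_t\geq K^2_t$, because $dK^1$ and $dK^2$ are carried by the coincidence sets $\{Y^1=L\}$ and $\{Y^2=L\}$, which are not ordered in a way one can exploit pointwise. Passing through the penalized equations, where $K^{i,n}$ is an explicit monotone functional of $Y^{i,n}$, is exactly what makes the $K$-inequality transparent; the only two points needing care are the (standard) Lipschitz-BSDE comparison at fixed $n$ and the fact that $K^{i,n}\to K^i$ uniformly in $t$, so that the pointwise-in-$t$ inequalities are preserved in the limit.
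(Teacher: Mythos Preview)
Your proposal is correct and follows essentially the same route as the paper: compare the penalized BSDEs at each level $n$ via the standard Lipschitz comparison theorem to get $Y^{1,n}\leq Y^{2,n}$, read off $K^{1,n}\geq K^{2,n}$ from the explicit formula $K^{i,n}_t=n\int_0^t(Y^{i,n}_s-L_s)^-\,ds$ and the monotonicity of $x\mapsto(x-L_s)^-$, and pass to the limit using the convergences established in the Appendix. Your parenthetical alternative for the $Y$-inequality via Meyer--It\^o/linearization and your commentary on why the penalization is the right device for the $K$-inequality are extra (and correct), but the core argument matches the paper's proof line for line.
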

\begin{proof}
We consider the penalized equations relative to the Reflected BSDE with
data $(\xi^i,f^i,L)$ for $i=1,2$ and $n\in\mathbb{N}$, as follows
\begin{equation*}
Y_{t}^{n,i}=\xi^i +\int_{t}^{T}
f^i\bigl(s,Y_{s}^{n,i}\bigr)ds+n\int
_{t}^{T}\bigl(Y_{s}^{n,i}-L_{s}
\bigr)^{-}-\int_{t}^{T}Z_{s}^{n,i}dB_{s}.
\end{equation*}
Let $f_n^i(t,y):=f^i(t,y)+n(y-L_{s})^{-}$. So, by the comparison
theorem, we have $Y_{t}^{n,1}\leq Y_{t}^{n,2}$ for $t\leq T$.
Since $K_{t}^{n,i}=n\int_{0}^{t}(Y_{s}^{n,i}-L_{s})^{-}ds$ for $i=1,2$,
we deduce that $K_{t}^{n,1}\geq K_{t}^{n,2}$ for $t\leq T$.
But $Y_{t}^{n,i}\nearrow Y_{t}^{i}$ and $K_{t}^{n,i}\longrightarrow
K_{t}^{i}$ as $n\longrightarrow+\infty$ for $i=1,2$, and it follows that
$Y_t^1\leq Y_t^2$ and $K_t^1\geq K_t^2$ for $t\leq T$.
\end{proof}
\end{appendix}
%

\begin{thebibliography}{29}

\bibitem{BHM}
%
\begin{barticle}
\bauthor{\bsnm{Bahlali}, \binits{K.}},
\bauthor{\bsnm{Hamad\`{e}ne}, \binits{S.}},
\bauthor{\bsnm{Mezerdi}, \binits{B.}}:
\batitle{Backward stochastic differential equations with two reflecting
barriers and continuous with quadratic growth coefficient}.
\bjtitle{Stochastic Processes and their Applications}
\bvolume{115},
\bfpage{1107}--\blpage{1129}
(\byear{2005}).
\bid{mr={2147243}}
\end{barticle}
%
%
\OrigBibText
%
\begin{barticle}
\bauthor{\bsnm{Bahlali}, \binits{K.}},
\bauthor{\bsnm{Hamad\`{e}ne}, \binits{S.}},
\bauthor{\bsnm{Mezerdi}, \binits{B.}}:
\batitle{Backward stochastic differential equations with two reflecting
barriers and continuous with quadratic growth coefficient}.
\bjtitle{Stochastic Processes and their Applications}
\bvolume{115},
\bfpage{1107}--\blpage{1129}
(\byear{2005})
\end{barticle}
%
\endOrigBibText
\bptok{structpyb}%
\endbibitem

\bibitem{BK}
%
\begin{botherref}
\oauthor{\bsnm{Bender}, \binits{C.}},
\oauthor{\bsnm{Kohlmann}, \binits{M.}}:
BSDEs with Stochastic Lipschitz Condition.
\href{http://hdl.handle.net/10419/85163}{http://hdl.}
\href{http://hdl.handle.net/10419/85163}{handle.net/10419/85163}
\end{botherref}
%
%
\OrigBibText
%
\begin{botherref}
\oauthor{\bsnm{Bender}, \binits{C.}},
\oauthor{\bsnm{Kohlmann}, \binits{M.}}:
BSDEs with Stochastic Lipschitz Condition.
http://hdl.handle.net/10419/85163
\end{botherref}
%
\endOrigBibText
\bptok{structpyb}%
\endbibitem

\bibitem{Bi}
%
\begin{barticle}
\bauthor{\bsnm{Bismut}, \binits{J.M.}}:
\batitle{Conjugate convex functions in optimal stochastic control}.
\bjtitle{Journal of Mathematical Analysis and Applications}
\bvolume{44(2)},
\bfpage{384}--\blpage{40}
(\byear{1973}).
\bid{mr={0329726}}
\end{barticle}
%
%
\OrigBibText
%
\begin{barticle}
\bauthor{\bsnm{Bismut}, \binits{J.M.}}:
\batitle{Conjugate convex functions in optimal stochastic control}.
\bjtitle{Journal of Mathematical Analysis and Applications}
\bvolume{44(2)},
\bfpage{384}--\blpage{40}
(\byear{1973})
\end{barticle}
%
\endOrigBibText
\bptok{structpyb}%
\endbibitem

\bibitem{Chung}
%
\begin{bbook}
\bauthor{\bsnm{Chung}, \binits{K.}}:
\bbtitle{A Coure in Probability Theory}.
\bpublisher{Academic Press}
(\byear{2001})
\end{bbook}
%
%
\OrigBibText
%
\begin{bbook}
\bauthor{\bsnm{Chung}, \binits{K.}}:
\bbtitle{A Coure in Probability Theory}.
\bpublisher{Academic Press}
(\byear{2001})
\end{bbook}
%
\endOrigBibText
\bptok{structpyb}%
\endbibitem

\bibitem{CM}
%
\begin{barticle}
\bauthor{\bsnm{Cr\'{e}pey}, \binits{S.}},
\bauthor{\bsnm{Matoussi}, \binits{A.}}:
\batitle{Reflected and doubly reflected bsdes with jumps: a priori estimates
and comparison}.
\bjtitle{Ann. Appl. Probab}
\bvolume{18},
\bfpage{2041}--\blpage{2069}
(\byear{2008})
\end{barticle}
%
%
\OrigBibText
%
\begin{barticle}
\bauthor{\bsnm{Cr\'{e}pey}, \binits{S.}},
\bauthor{\bsnm{Matoussi}, \binits{A.}}:
\batitle{Reflected and doubly reflected bsdes with jumps: a priori estimates
and comparison}.
\bjtitle{Ann. Appl. Probab}
\bvolume{18},
\bfpage{2041}--\blpage{2069}
(\byear{2008})
\end{barticle}
%
\endOrigBibText
\bptok{structpyb}%
\endbibitem

\bibitem{CK}
%
\begin{barticle}
\bauthor{\bsnm{Cvitanic}, \binits{J.}},
\bauthor{\bsnm{Karatzas}, \binits{I.}}:
\batitle{Backward stochastic differential equations with reflection and dynkin
games}.
\bjtitle{The Annals of Probability}
\bvolume{24(4)},
\bfpage{2024}--\blpage{2056}
(\byear{1996})
\end{barticle}
%
%
\OrigBibText
%
\begin{barticle}
\bauthor{\bsnm{Cvitanic}, \binits{J.}},
\bauthor{\bsnm{Karatzas}, \binits{I.}}:
\batitle{Backward stochastic differential equations with reflection and dynkin
games}.
\bjtitle{The Annals of Probability}
\bvolume{24(4)},
\bfpage{2024}--\blpage{2056}
(\byear{1996})
\end{barticle}
%
\endOrigBibText
\bptok{structpyb}%
\endbibitem

\bibitem{DM}
%
\begin{bbook}
\bauthor{\bsnm{Dellacherie}, \binits{C.}},
\bauthor{\bsnm{Meyer}, \binits{P.}}:
\bbtitle{Probabilit\'{e}s et Potentiel I-IV}.
\bpublisher{Hermann},
\blocation{Paris}
(\byear{1975}).
\bid{mr={0488194}}
\end{bbook}
%
%
\OrigBibText
%
\begin{bbook}
\bauthor{\bsnm{Dellacherie}, \binits{C.}},
\bauthor{\bsnm{Meyer}, \binits{P.}}:
\bbtitle{Probabilit\'{e}s et Potentiel I-IV}.
\bpublisher{Hermann},
\blocation{Paris}
(\byear{1975})
\end{bbook}
%
\endOrigBibText
\bptok{structpyb}%
\endbibitem

\bibitem{ELH}
%
\begin{bchapter}
\bauthor{\bsnm{El~Karoui}, \binits{N.}},
\bauthor{\bsnm{Huang}, \binits{S.J.}}:
\bctitle{A general result of existence and uniqueness of backward stochastic
differential equations}.
In: \beditor{\bsnm{Pitman-Res-Notes-Math-Ser}} (ed.)
\bbtitle{Backward Stochastic Differential Equations}.
\bpublisher{Springer},
\bseriesno{vol. 364},
pp. \bfpage{27}--\blpage{36}
(\byear{1997})
\end{bchapter}
%
%
\OrigBibText
%
\begin{bchapter}
\bauthor{\bsnm{El~Karoui}, \binits{N.}},
\bauthor{\bsnm{Huang}, \binits{S.J.}}:
\bctitle{A general result of existence and uniqueness of backward stochastic
differential equations}.
In: \beditor{\bsnm{Pitman-Res-Notes-Math-Ser}} (ed.)
\bbtitle{Backward Stochastic Differential Equations}.
\bpublisher{Springer},
\blocation{vol. 364, pp. 27-36}
(\byear{1997})
\end{bchapter}
%
\endOrigBibText
\bptok{structpyb}%
\endbibitem

\bibitem{EQ}
%
\begin{barticle}
\bauthor{\bsnm{El~Karoui}, \binits{N.}},
\bauthor{\bsnm{Quenez}, \binits{M.C.}}:
\batitle{Non-linear pricing theory and backward stochastic differential
equations}.
\bjtitle{Financial mathematics}
\bvolume{1656},
\bfpage{191}--\blpage{246}
(\byear{1997})
\end{barticle}
%
%
\OrigBibText
%
\begin{barticle}
\bauthor{\bsnm{El~Karoui}, \binits{N.}},
\bauthor{\bsnm{Quenez}, \binits{M.C.}}:
\batitle{Non-linear pricing theory and backward stochastic differential
equations}.
\bjtitle{Financial mathematics}
\bvolume{1656},
\bfpage{191}--\blpage{246}
(\byear{1997})
\end{barticle}
%
\endOrigBibText
\bptok{structpyb}%
\endbibitem

\bibitem{EPeQ}
%
\begin{barticle}
\bauthor{\bsnm{El~Karoui}, \binits{N.}},
\bauthor{\bsnm{Peng}, \binits{S.}},
\bauthor{\bsnm{Quenez}, \binits{M.C.}}:
\batitle{Backward stochastic differential equations in finance}.
\bjtitle{Mathematical Finance}
\bvolume{7(1)},
\bfpage{1}--\blpage{71}
(\byear{1997})
\end{barticle}
%
%
\OrigBibText
%
\begin{barticle}
\bauthor{\bsnm{El~Karoui}, \binits{N.}},
\bauthor{\bsnm{Peng}, \binits{S.}},
\bauthor{\bsnm{Quenez}, \binits{M.C.}}:
\batitle{Backward stochastic differential equations in finance}.
\bjtitle{Mathematical Finance}
\bvolume{7(1)},
\bfpage{1}--\blpage{71}
(\byear{1997})
\end{barticle}
%
\endOrigBibText
\bptok{structpyb}%
\endbibitem

\bibitem{EKPPQ}
%
\begin{barticle}
\bauthor{\bsnm{El~Karoui}, \binits{N.}},
\bauthor{\bsnm{Kapoudjian}, \binits{C.}},
\bauthor{\bsnm{Pardoux}, \binits{E.}},
\bauthor{\bsnm{Peng}, \binits{S.}},
\bauthor{\bsnm{Quenez}, \binits{M.C.}}:
\batitle{Reflected solutions of backward sde's and related obstacle problems
for pde's}.
\bjtitle{The Annals of Probability}
\bvolume{25},
\bfpage{702}--\blpage{737}
(\byear{1997}).
\bid{mr={1434123}}
\end{barticle}
%
%
\OrigBibText
%
\begin{barticle}
\bauthor{\bsnm{El~Karoui}, \binits{N.}},
\bauthor{\bsnm{Kapoudjian}, \binits{C.}},
\bauthor{\bsnm{Pardoux}, \binits{E.}},
\bauthor{\bsnm{Peng}, \binits{S.}},
\bauthor{\bsnm{Quenez}, \binits{M.C.}}:
\batitle{Reflected solutions of backward sde's and related obstacle problems
for pde's}.
\bjtitle{The Annals of Probability}
\bvolume{25},
\bfpage{702}--\blpage{737}
(\byear{1997})
\end{barticle}
%
\endOrigBibText
\bptok{structpyb}%
\endbibitem

\bibitem{EL}
%
\begin{barticle}
\bauthor{\bsnm{El~Otmani}, \binits{M.}}:
\batitle{Approximation scheme for solutions of bsdes with two reflecting
barriers}.
\bjtitle{Stochastic Analysis and Applications}
\bvolume{26(1)},
\bfpage{60}--\blpage{83}
(\byear{2008})
\end{barticle}
%
%
\OrigBibText
%
\begin{barticle}
\bauthor{\bsnm{El~Otmani}, \binits{M.}}:
\batitle{Approximation scheme for solutions of bsdes with two reflecting
barriers}.
\bjtitle{Stochastic Analysis and Applications}
\bvolume{26(1)},
\bfpage{60}--\blpage{83}
(\byear{2008})
\end{barticle}
%
\endOrigBibText
\bptok{structpyb}%
\endbibitem

\bibitem{ELM}
%
\begin{barticle}
\bauthor{\bsnm{El~Otmani}, \binits{M.}},
\bauthor{\bsnm{Mrhardy}, \binits{N.}}:
\batitle{Generalized bsde with two reflecting barriers}.
\bjtitle{Random Operators and Stochastic Equations}
\bvolume{16(4)},
\bfpage{357}--\blpage{382}
(\byear{2008})
\end{barticle}
%
%
\OrigBibText
%
\begin{barticle}
\bauthor{\bsnm{El~Otmani}, \binits{M.}},
\bauthor{\bsnm{Mrhardy}, \binits{N.}}:
\batitle{Generalized bsde with two reflecting barriers}.
\bjtitle{Random Operators and Stochastic Equations}
\bvolume{16(4)},
\bfpage{357}--\blpage{382}
(\byear{2008})
\end{barticle}
%
\endOrigBibText
\bptok{structpyb}%
\endbibitem

\bibitem{EOH}
%
\begin{barticle}
\bauthor{\bsnm{Essaky}, \binits{E.}},
\bauthor{\bsnm{Ouknine}, \binits{Y.}},
\bauthor{\bsnm{Harraj}, \binits{N.}}:
\batitle{Backward stochastic differential equation with two reflecting barriers
and jumps}.
\bjtitle{Stochastic Analysis and Applications}
\bvolume{23},
\bfpage{921}--\blpage{938}
(\byear{2005}).
\bid{mr={2158885}}
\end{barticle}
%
%
\OrigBibText
%
\begin{barticle}
\bauthor{\bsnm{Essaky}, \binits{E.}},
\bauthor{\bsnm{Ouknine}, \binits{Y.}},
\bauthor{\bsnm{Harraj}, \binits{N.}}:
\batitle{Backward stochastic differential equation with two reflecting barriers
and jumps}.
\bjtitle{Stochastic Analysis and Applications}
\bvolume{23},
\bfpage{921}--\blpage{938}
(\byear{2005})
\end{barticle}
%
\endOrigBibText
\bptok{structpyb}%
\endbibitem

\bibitem{HH}
%
\begin{barticle}
\bauthor{\bsnm{Hamad\`{e}ne}, \binits{S.}},
\bauthor{\bsnm{Hassani}, \binits{M.}}:
\batitle{Bsdes with two reflecting barriers : the general result}.
\bjtitle{Probability Theory and Related Fields}
\bvolume{132(2)},
\bfpage{237}--\blpage{264}
(\byear{2005})
\end{barticle}
%
%
\OrigBibText
%
\begin{barticle}
\bauthor{\bsnm{Hamad\`{e}ne}, \binits{S.}},
\bauthor{\bsnm{Hassani}, \binits{M.}}:
\batitle{Bsdes with two reflecting barriers : the general result}.
\bjtitle{Probability Theory and Related Fields}
\bvolume{132(2)},
\bfpage{237}--\blpage{264}
(\byear{2005})
\end{barticle}
%
\endOrigBibText
\bptok{structpyb}%
\endbibitem

\bibitem{HHd}
%
\begin{barticle}
\bauthor{\bsnm{Hamad\`{e}ne}, \binits{S.}},
\bauthor{\bsnm{Hdhiri}, \binits{I.}}:
\batitle{Backward stochastic differential equations with two distinct
reflecting barriers and quadratic growth generator}.
\bjtitle{J. Appl. Math. Stoch. Anal}
\bvolume{2006},
\bfpage{1}--\blpage{28}
(\byear{2006})
\end{barticle}
%
%
\OrigBibText
%
\begin{barticle}
\bauthor{\bsnm{Hamad\`{e}ne}, \binits{S.}},
\bauthor{\bsnm{Hdhiri}, \binits{I.}}:
\batitle{Backward stochastic differential equations with two distinct
reflecting barriers and quadratic growth generator}.
\bjtitle{J. Appl. Math. Stoch. Anal}
\bvolume{2006},
\bfpage{1}--\blpage{28}
(\byear{2006})
\end{barticle}
%
\endOrigBibText
\bptok{structpyb}%
\endbibitem

\bibitem{HL}
%
\begin{barticle}
\bauthor{\bsnm{Hamad\`{e}ne}, \binits{S.}},
\bauthor{\bsnm{Lepeltier}, \binits{J.P.}}:
\batitle{Zero-sum stochastic differential games and backward equations}.
\bjtitle{Systems and Control Letters}
\bvolume{24(4)},
\bfpage{259}--\blpage{263}
(\byear{1995}).
\bid{mr={1321134}}
\end{barticle}
%
%
\OrigBibText
%
\begin{barticle}
\bauthor{\bsnm{Hamad\`{e}ne}, \binits{S.}},
\bauthor{\bsnm{Lepeltier}, \binits{J.P.}}:
\batitle{Zero-sum stochastic differential games and backward equations}.
\bjtitle{Systems and Control Letters}
\bvolume{24(4)},
\bfpage{259}--\blpage{263}
(\byear{1995})
\end{barticle}
%
\endOrigBibText
\bptok{structpyb}%
\endbibitem

\bibitem{LSM}
%
\begin{barticle}
\bauthor{\bsnm{Lepeltier}, \binits{J.P.}},
\bauthor{\bsnm{San~Martin}, \binits{J.}}:
\batitle{Backward sdes with two barriers and continuous coefficient: an
existence result}.
\bjtitle{Journal of applied probability}
\bvolume{41(1)},
\bfpage{162}--\blpage{175}
(\byear{2004})
\end{barticle}
%
%
\OrigBibText
%
\begin{barticle}
\bauthor{\bsnm{Lepeltier}, \binits{J.P.}},
\bauthor{\bsnm{San~Martin}, \binits{J.}}:
\batitle{Backward sdes with two barriers and continuous coefficient: an
existence result}.
\bjtitle{Journal of applied probability}
\bvolume{41(1)},
\bfpage{162}--\blpage{175}
(\byear{2004})
\end{barticle}
%
\endOrigBibText
\bptok{structpyb}%
\endbibitem

\bibitem{LS}
%
\begin{barticle}
\bauthor{\bsnm{Li}, \binits{M.}},
\bauthor{\bsnm{Shi}, \binits{Y.}}:
\batitle{Solving the double barriers reflected bsdes via penalization method}.
\bjtitle{Statistics and Probability Letters}
\bvolume{110},
\bfpage{74}--\blpage{83}
(\byear{2016})
\end{barticle}
%
%
\OrigBibText
%
\begin{barticle}
\bauthor{\bsnm{Li}, \binits{M.}},
\bauthor{\bsnm{Shi}, \binits{Y.}}:
\batitle{Solving the double barriers reflected bsdes via penalization method}.
\bjtitle{Statistics and Probability Letters}
\bvolume{110},
\bfpage{74}--\blpage{83}
(\byear{2016})
\end{barticle}
%
\endOrigBibText
\bptok{structpyb}%
\endbibitem

\bibitem{Pa}
%
\begin{barticle}
\bauthor{\bsnm{Pardoux}, \binits{E.}}:
\batitle{Backward stochastic differential equations and viscosity
solutions of
systems of semilinear parabolic and elliptic pdes of second order}.
\bjtitle{Stochastic Analysis and Related Topics VI}
\bvolume{42},
\bfpage{79}--\blpage{127}
(\byear{1998})
\end{barticle}
%
%
\OrigBibText
%
\begin{barticle}
\bauthor{\bsnm{Pardoux}, \binits{E.}}:
\batitle{Backward stochastic differential equations and viscosity
solutions of
systems of semilinear parabolic and elliptic pdes of second order}.
\bjtitle{Stochastic Analysis and Related Topics VI}
\bvolume{42},
\bfpage{79}--\blpage{127}
(\byear{1998})
\end{barticle}
%
\endOrigBibText
\bptok{structpyb}%
\endbibitem

\bibitem{PP}
%
\begin{barticle}
\bauthor{\bsnm{Pardoux}, \binits{E.}},
\bauthor{\bsnm{Peng}, \binits{S.}}:
\batitle{Adapted solution of a backward stochastic differential equation}.
\bjtitle{Systems and Control Letters}
\bvolume{1},
\bfpage{55}--\blpage{61}
(\byear{1990})
\end{barticle}
%
%
\OrigBibText
%
\begin{barticle}
\bauthor{\bsnm{Pardoux}, \binits{E.}},
\bauthor{\bsnm{Peng}, \binits{S.}}:
\batitle{Adapted solution of a backward stochastic differential equation}.
\bjtitle{Systems and Control Letters}
\bvolume{1},
\bfpage{55}--\blpage{61}
(\byear{1990})
\end{barticle}
%
\endOrigBibText
\bptok{structpyb}%
\endbibitem

\bibitem{PP2}
%
\begin{barticle}
\bauthor{\bsnm{Pardoux}, \binits{E.}},
\bauthor{\bsnm{Peng}, \binits{S.}}:
\batitle{Backward stochastic differential equations and quasilinear parabolic
partial differential equations}.
\bjtitle{Stochastic partial differential Equations and their applications}
\bvolume{176},
\bfpage{200}--\blpage{217}
(\byear{1992})
\end{barticle}
%
%
\OrigBibText
%
\begin{barticle}
\bauthor{\bsnm{Pardoux}, \binits{E.}},
\bauthor{\bsnm{Peng}, \binits{S.}}:
\batitle{Backward stochastic differential equations and quasilinear parabolic
partial differential equations}.
\bjtitle{Stochastic partial differential Equations and their applications}
\bvolume{176},
\bfpage{200}--\blpage{217}
(\byear{1992})
\end{barticle}
%
\endOrigBibText
\bptok{structpyb}%
\endbibitem

\bibitem{Pro}
%
\begin{bbook}
\bauthor{\bsnm{Protter}, \binits{P.}}:
\bbtitle{Stochastic Integration and Differential Equations}.
\bpublisher{Springer},
\blocation{Berlin}
(\byear{1990})
\end{bbook}
%
%
\OrigBibText
%
\begin{bbook}
\bauthor{\bsnm{Protter}, \binits{P.}}:
\bbtitle{Stochastic Integration and Differential Equations}.
\bpublisher{Springer},
\blocation{Berlin}
(\byear{1990})
\end{bbook}
%
\endOrigBibText
\bptok{structpyb}%
\endbibitem

\bibitem{RE}
%
\begin{barticle}
\bauthor{\bsnm{Ren}, \binits{Y.}},
\bauthor{\bsnm{El~Otmani}, \binits{M.}}:
\batitle{Doubly reflected bsdes driven by a levy process}.
\bjtitle{Nonlinear Analysis : Real World Applications}
\bvolume{13},
\bfpage{1252}--\blpage{1267}
(\byear{2012})
\end{barticle}
%
%
\OrigBibText
%
\begin{barticle}
\bauthor{\bsnm{Ren}, \binits{Y.}},
\bauthor{\bsnm{El~Otmani}, \binits{M.}}:
\batitle{Doubly reflected bsdes driven by a levy process}.
\bjtitle{Nonlinear Analysis : Real World Applications}
\bvolume{13},
\bfpage{1252}--\blpage{1267}
(\byear{2012})
\end{barticle}
%
\endOrigBibText
\bptok{structpyb}%
\endbibitem

\bibitem{YS}
%
\begin{barticle}
\bauthor{\bsnm{Saisho}, \binits{Y.}}:
\batitle{Stochastic differential equations for multi-dimensional domain with
reflecting boundary}.
\bjtitle{Probab. Theory Related Fields}
\bvolume{74},
\bfpage{455}--\blpage{477}
(\byear{1987})
\end{barticle}
%
%
\OrigBibText
%
\begin{barticle}
\bauthor{\bsnm{Saisho}, \binits{Y.}}:
\batitle{Stochastic differential equations for multi-dimensional domain with
reflecting boundary}.
\bjtitle{Probab. Theory Related Fields}
\bvolume{74},
\bfpage{455}--\blpage{477}
(\byear{1987})
\end{barticle}
%
\endOrigBibText
\bptok{structpyb}%
\endbibitem

\bibitem{WL}
%
\begin{botherref}
\oauthor{\bsnm{Wen}, \binits{L.}}:
Reflected BSDE with stochastic Lipschitz coefficient.
\href{https://arxiv.org/abs/0912.2162}{https://arxiv.org/abs/}
\href{https://arxiv.org/abs/0912.2162}{0912.2162}
\end{botherref}
%
%
\OrigBibText
%
\begin{botherref}
\oauthor{\bsnm{Wen}, \binits{L.}}:
Reflected BSDE with stochastic Lipschitz coefficient.
https://arxiv.org/abs/0912.2162
\end{botherref}
%
\endOrigBibText
\bptok{structpyb}%
\endbibitem

\bibitem{Xu}
%
\begin{barticle}
\bauthor{\bsnm{Xu}, \binits{M.}}:
\batitle{Reflected backward sdes with two barriers under monotonicity and
general increasing conditions}.
\bjtitle{Journal of Theoretical Probability}
\bvolume{20(4)},
\bfpage{1005}--\blpage{1039}
(\byear{2007})
\end{barticle}
%
%
\OrigBibText
%
\begin{barticle}
\bauthor{\bsnm{Xu}, \binits{M.}}:
\batitle{Reflected backward sdes with two barriers under monotonicity and
general increasing conditions}.
\bjtitle{Journal of Theoretical Probability}
\bvolume{20(4)},
\bfpage{1005}--\blpage{1039}
(\byear{2007})
\end{barticle}
%
\endOrigBibText
\bptok{structpyb}%
\endbibitem

\bibitem{Yosida}
%
\begin{bbook}
\bauthor{\bsnm{Yosida}, \binits{K.}}:
\bbtitle{Functional Analysis}.
\bpublisher{Springer},
\blocation{New York}
(\byear{1980})
\end{bbook}
%
%
\OrigBibText
%
\begin{bbook}
\bauthor{\bsnm{Yosida}, \binits{K.}}:
\bbtitle{Functional Analysis}.
\bpublisher{Springer},
\blocation{New York}
(\byear{1980})
\end{bbook}
%
\endOrigBibText
\bptok{structpyb}%
\endbibitem

\bibitem{ZZ}
%
\begin{barticle}
\bauthor{\bsnm{Zheng}, \binits{S.}},
\bauthor{\bsnm{Zhou}, \binits{S.}}:
\batitle{A generalized existence theorem of reflected bsdes with double
obstacles}.
\bjtitle{Statistics and Probability Letters}
\bvolume{78(5)},
\bfpage{528}--\blpage{536}
(\byear{2007})
\end{barticle}
%
%
\OrigBibText
%
\begin{barticle}
\bauthor{\bsnm{Zheng}, \binits{S.} \bsuffix{S.and~Zhou}}:
\batitle{A generalized existence theorem of reflected bsdes with double
obstacles}.
\bjtitle{Statistics and Probability Letters}
\bvolume{78(5)},
\bfpage{528}--\blpage{536}
(\byear{2007})
\end{barticle}
%
\endOrigBibText
\bptok{structpyb}%
\endbibitem

\end{thebibliography}

\end{document}